\tikzset{xx/.style={cross out, draw=red, very thick, inner sep=0, minimum size=1mm},
xxx/.style={cross out, draw=red, inner sep=0, minimum size=1.2mm},
o/.style={circle, draw=gray, inner sep=0, minimum size=1mm},
oo/.style={circle, fill, inner sep=0, minimum size=2mm},
ooo/.style={circle, fill=green!50!black, inner sep=0, minimum size=2mm},
not/.style={cross out, draw=red, very thick, inner sep=0, minimum size=1mm},
vertex/.style={circle, fill, inner sep=0, minimum size=2.5mm},
edge/.style={very thick},
fault line/.style={red, line width=1.5mm, line cap=rect},
fault line thin/.style={red, line width=1mm, line cap=rect}}
\newtheorem{theorem}{Theorem}[section]
\newtheorem{definition}[theorem]{Definition}
\newtheorem{proposition}[theorem]{Proposition}
\newtheorem{lemma}[theorem]{Lemma}
\newtheorem{claim}[theorem]{Claim}
\newtheorem{conjecture}[theorem]{Conjecture}
\newtheorem{problem}[theorem]{Problem}
\newtheorem{observation}[theorem]{Observation}
\newcommand{\Z}{\mathbb{Z}}
\newcommand{\lset}[1]{\left\{#1\right\}}
\newcommand{\set}[2]{\left\{#1 : #2 \right\}}
\newcommand{\abs}[1]{\left|#1\right|}
\newcommand{\seq}[1]{({#1}_i)_{i \in \Z}}
\newcommand{\rev}[1]{({#1}_{-i})_{i \in \Z}}
\begin{document}

\title{Graphs without proper subgraphs of minimum  degree~3 and short cycles}

\author{\large{Lothar Narins}\thanks{Research supported by the Research Training Group Methods for Discrete Structures and the
Berlin Mathematical School.}, \large{Alexey Pokrovskiy}\thanks{Research supported by the Research Training Group Methods for Discrete Structures.}, \large{Tibor Szab\'o} \thanks{Research partially supported by DFG within the Research Training Group Methods for
Discrete Structures.}\\ \\
\normalsize Department of Mathematics,
\\ \normalsize  {Freie Universit\"at,} 
\\ \normalsize  Berlin, Germany. 
 }

\maketitle

\begin{abstract}
We study graphs on $n$ vertices which have $2n-2$ edges and no proper induced subgraphs of minimum degree $3$.  Erd\H{o}s, Faudree, Gy\'arf\'as, and Schelp conjectured that such graphs always have cycles of lengths $3,4,5,\dots, C(n)$ for some function $C(n)$ tending to infinity.  We disprove this conjecture, resolve a related problem about leaf-to-leaf
path lengths in trees, and characterize graphs with $n$ vertices and $2n-2$ edges, containing no proper
subgraph of minimum degree $3$. 
\end{abstract}

\section{Introduction}
A simple exercise in graph theory is to show that every graph $G$ with $n$ vertices and at least $2n-2$ edges must have an induced subgraph with minimum degree $3$. Moreover, this statement is best possible: there are several constructions with $2n-3$ edges which do not have this property.
So every graph with $n$ vertices and $2n-2$ edges must contain an induced subgraph with minimum degree $3$, however this subgraph might be the whole graph.  A subgraph $H$ of $G$ is called {\em proper} if $H\neq G$.  See Figure~\ref{ExampleGraphs} for two examples of graphs with $2|G|-2$ edges but no proper induced subgraphs of minimum degree $3$.  The first of these, has an even stronger property---it has no proper induced or non-induced subgraphs with minimum degree $3$.  On the other hand, the second example has a proper non-induced subgraph with minimum degree $3$ formed by removing the edge between the two vertices of degree $4$.

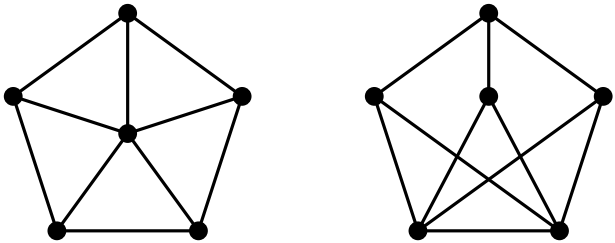
\begin{figure}[hbt]
  \centering
  	\begin{tikzpicture}[scale=0.8]
  		\draw[edge] (0,2) -- (2*0.951,2*0.309) -- (2*0.588,2*-0.809) -- (2*-0.588,2*-0.809) -- (2*-0.951,2*0.309) -- cycle;
  		\draw[edge] (0,0) -- (0,2);
  		\draw[edge] (0,0) -- (2*0.951,2*0.309);
  		\draw[edge] (0,0) -- (2*0.588,2*-0.809);
  		\draw[edge] (0,0) -- (2*-0.588,2*-0.809);
  		\draw[edge] (0,0) -- (2*-0.951,2*0.309);
  	
  		\node[vertex] at (0,0) {};
  		\node[vertex] at (0,2) {};
  		\node[vertex] at (2*0.951,2*0.309) {};
  		\node[vertex] at (2*0.588,2*-0.809) {};
  		\node[vertex] at (2*-0.588,2*-0.809) {};
  		\node[vertex] at (2*-0.951,2*0.309) {};
  		
  		\draw[edge] (6+0,2) -- (6+2*0.951,2*0.309) -- (6+2*0.588,2*-0.809) -- (6+2*-0.588,2*-0.809) -- (6+2*-0.951,2*0.309) -- cycle;
  		\draw[edge] (6+0,2*0.309) -- (6+0,2);
  		\draw[edge] (6+0,2*0.309) -- (6+2*0.588,2*-0.809);
  		\draw[edge] (6+0,2*0.309) -- (6+2*-0.588,2*-0.809);
  		\draw[edge] (6+2*0.951,2*0.309) -- (6+2*-0.588,2*-0.809);
  		\draw[edge] (6+2*0.588,2*-0.809) -- (6+2*-0.951,2*0.309);
  		
  		\node[vertex] at (6+0,2*0.309) {};
  		\node[vertex] at (6+0,2) {};
  		\node[vertex] at (6+2*0.951,2*0.309) {};
  		\node[vertex] at (6+2*0.588,2*-0.809) {};
  		\node[vertex] at (6+2*-0.588,2*-0.809) {};
  		\node[vertex] at (6+2*-0.951,2*0.309) {};
  	\end{tikzpicture}
  \caption{Two examples of graphs on $6$ vertices with $10$ edges and no proper induced subgraphs with minimum degree $3$.} \label{ExampleGraphs}
\end{figure}

In this paper we will study graphs with $n$ vertices $2n-2$ edges which have no proper induced subgraphs with minimum degree $3$. Following Bollob\'as and Brightwell~\cite{BB} we call such graphs  {\em degree $3$-critical}. It is easy to see that graphs with $n$ vertices and
at least $2n-1$ edges contain a {\em proper} degree $3$-critical subgraph.
Erd\H os (cf \cite{EFRS})  conjectured that 
they should contain a degree $3$-critical subgraph not only on at most $n-1$, 
but on at most $(1-\epsilon)n$ vertices, for some constant $\epsilon >0$.
Degree $3$-critical graphs are closely related to several other interesting classes of graphs. For example, they have the property that all their proper subgraphs are 2-degenerate 
(where a graph is defined to be {\em $2$-degenerate} if it has no subgraph of 
minimum degree $3$).  Also notice that degree $3$-critical graphs certainly have no proper subgraphs $H$ with $2|H|-2$ edges. Graphs with $2n-2$ edges and no proper subgraphs $H$ with $2|H|-2$ edges have a number of interesting properties. They are {\em rigidity circuits}: by a theorem of Laman, removing any edge from such a graph produces a graph $H$ which is \emph{minimally rigid in the plane}, i.e., any embedding of it into the plane where the vertices are substituted by joints and the edges by rods produces a rigid structure, but no proper subgraph of $H$ has 
this property.  Furthermore, by
a special case of a theorem of Nash-Williams 
these graphs are exactly the ones that are the union of two disjoint spanning trees and
Lehman's Theorem characterizes them as the minimal graphs to win the 
so-called {\em connectivity game} on. That is,
with two players alternately occupying the edges of $G$, the player playing second is able to occupy a spanning tree.

The study of degree $3$-critical graphs was initiated by Erd\H{o}s, Faudree, Gy\'arf\'as, and   Schelp~\cite{EFGS}, where they investigated the possible cycle lengths. They showed that degree $3$-critical graphs on $n\geq 5$ vertices 
always contain a cycle of length $3$, $4$, and 
$5$, as well as a cycle of length at least $\lfloor \log_2 n\rfloor$, but not 
necessarily of length more than $\sqrt{n}$.  Bollob\'as and Brightwell \cite{BB} resolved asymptotically the question of how short the longest cycle length in degree $3$-critical graphs can  be. 
They showed that every degree $3$-critical graph contains a cycle of length at least 
$4\log_2 n-o(\log n)$ and constructed degree $3$-critical graphs 
with no cycles of length more than $4\log_2 n+ O(1)$.
Erd\H{o}s, et al.~\cite{EFGS} made the following conjecture about possible cycle lengths in degree $3$-graphs.
\begin{conjecture}[Erd\H{o}s, Faudree, Gy\'arf\'as, and   Schelp, \cite{EFGS}]\label{ErdosConjecture}
There is an increasing function $C(n)$ such that the following holds such that
every degree $3$-critical graph on $n$ vertices  
contains all cycles of lengths $3, 4, 5, 6, \dots, C(n)$.
\end{conjecture}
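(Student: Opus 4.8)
Because the abstract announces that Conjecture~\ref{ErdosConjecture} is \emph{false}, my goal is to refute it rather than to prove it, so what follows is a plan for a disproof. The first step is to reduce the refutation to a single clean target. If an increasing $C(n)\to\infty$ as in the statement existed, then for every fixed length $\ell$ all sufficiently large degree $3$-critical graphs would be forced to contain a cycle of length $\ell$. Hence it suffices to produce, for arbitrarily large $n$, a degree $3$-critical graph on $n$ vertices that \emph{omits} one fixed length $\ell_0$: for these graphs $C(n)$ is pinned below $\ell_0$, contradicting $C(n)\to\infty$. By the theorem of Erd\H{o}s et al.\ quoted above the lengths $3,4,5$ are always present, so necessarily $\ell_0\ge 6$; but any fixed $\ell_0$ suffices, and the graph is even allowed to contain much longer cycles (the Bollob\'as--Brightwell bound forces cycles of length $\sim 4\log_2 n$ anyway), so the gap we seek is an isolated, bounded one.

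The plan is to build such graphs from trees, transferring control of cycle lengths to control of leaf-to-leaf distances. I would look for a gadget turning a tree $T$ into a degree $3$-critical graph $G(T)$ so that, apart from a bounded family of unavoidable short cycles, the cycle lengths of $G(T)$ are precisely the leaf-to-leaf path lengths of $T$ shifted by a fixed constant $c$. A natural first candidate is a modified wheel: keep a long rim but replace the single hub by an internal tree whose leaves are wired to the rim, so that a cycle passing through the internal tree has length equal to a leaf-to-leaf distance plus the attaching edges plus a rim arc. The technical content here is to arrange the wiring and the rim so that (i) the graph has exactly $2n-2$ edges and is degree $3$-critical, which I would certify by displaying two edge-disjoint spanning trees (via the Nash--Williams characterisation recalled above, which gives the edge count automatically) and by checking that no proper induced subgraph has minimum degree $3$, and (ii) no unintended cycle of length $\ell_0$ is created.

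This reduces everything to a purely combinatorial question about trees, which I expect to be the heart of the matter: can one construct trees on arbitrarily many vertices, with the internal degree structure the gadget requires, whose set of leaf-to-leaf distances omits a prescribed value $m$? The cheapest source of omissions is parity: in a complete binary tree all leaves lie at a common depth, so every leaf-to-leaf distance is even and all odd values are missed at once. If the gadget could be fed such a tree directly, the graph $G(T)$ would contain only even long cycles and would miss any fixed odd $\ell_0\ge 7$, already refuting the conjecture. I therefore expect the real difficulty to be precisely that the degree-$3$-critical constraint sabotages this shortcut: the extra edges needed to raise every leaf to degree $3$ tend to reintroduce short odd cycles, so one cannot simply exploit parity. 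The genuine task then becomes constructing trees with an \emph{internal} gap --- leaf-to-leaf distances appearing both below and above $m$ but never equal to $m$ --- which is the ``related problem about leaf-to-leaf path lengths'' flagged in the abstract.

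I would attack that tree problem by splicing: assemble the tree from blocks that realise prescribed short and long leaf-to-leaf distances of both parities, while a careful depth/parity bookkeeping across the blocks keeps the single value $m$ unattainable for every leaf pair. Translating back through the gadget, the resulting $G(T)$ is degree $3$-critical, exists for arbitrarily many vertices, and misses the single cycle length $m+c$, which completes the refutation. The two hardest points, in my estimation, are the explicit gap-tree construction and the verification that the gadget introduces no cycle of the forbidden length while remaining degree $3$-critical; these two requirements pull in opposite directions and will likely dictate the shape of the whole argument.
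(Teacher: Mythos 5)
Your overall architecture is the same as the paper's: reduce the refutation of Conjecture~\ref{ErdosConjecture} to exhibiting arbitrarily large degree $3$-critical graphs missing one fixed cycle length, build those graphs by a gadget that converts a tree $T$ into a degree $3$-critical graph whose (relevant) cycle lengths are the leaf-leaf distances of $T$ shifted by a constant, and then construct trees whose set of leaf-leaf distances has an internal gap. You also correctly diagnose that parity alone cannot work and that the tree gap is the heart of the matter. For the record, the paper's gadget is simpler than your rim-plus-internal-tree candidate: $G(T)$ adds just two vertices $x,y$, the edge $xy$, and all edges from $\{x,y\}$ to the leaves; for an even $1$-$3$ tree $T$ the graph $G(T)-xy$ is bipartite, so every odd cycle uses $xy$ and has length exactly a leaf-leaf distance plus $3$ (Lemma~\ref{CycleLengths}), which makes point (ii) of your plan (no unintended cycles of the forbidden length) essentially automatic.

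The genuine gap is that the step you yourself flag as hardest --- the explicit gap-tree construction --- is entirely missing, and it is not a routine splicing argument. The paper builds the trees as caterpillars of perfect binary trees hung on a path, reduces the existence of a leaf-leaf path of length $2m$ to an arithmetic condition $x_i+x_j+|i-j|=2m$ on the sequence of depths (Lemma~\ref{PathLengths}), and then must exhibit an explicit periodic ``$20$-avoiding'' sequence (Theorem~\ref{thm:20-avoiding}), verified by a fault-line analysis. Crucially, part (i) of Theorem~\ref{TreeLengths} shows that every sufficiently large even $1$-$3$ tree \emph{does} contain leaf-leaf paths of all even lengths up to $18$, so the gap value cannot be prescribed freely: $20$ is the smallest achievable gap, forcing the missing cycle length to be $23$ and nothing shorter. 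Your plan assumes a gap at an essentially arbitrary $m$ can be engineered by ``depth/parity bookkeeping across blocks''; without producing the avoiding sequence (or some equivalent certificate) the refutation does not go through, and the obstruction witnessed by Theorem~\ref{TreeLengths}~(i) shows the difficulty is real rather than notational.
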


A historical remark must be made here.  The exact phrasing of Conjecture~\ref{ErdosConjecture} in \cite{EFGS} is not quite what is stated above.  
In \cite{EFGS} first a class of graphs, $G^*(n, m)$, is defined as ``the set of graphs with $n$ vertices, $m$ edges, and with the property that no proper subgraph has minimum degree $3$.''  Then Conjecture~\ref{ErdosConjecture} is stated as ``If $G\in G^*(n,2n-2)$, then $G$ contains all cycles of length at most $k$ where $k$ tends to infinity.''  Notice that the word ``induced'' is not present in the original formulation.  However a careful reading of \cite{EFGS} shows that in that paper ``proper subgraph'' implicitly must mean ``proper induced subgraph''.  Indeed many of the constructions given in \cite{EFGS} (such as Examples 1, 2, 3, 5, and 6 on pages 197-201) of graphs which have ``no proper subgraphs of minimum degree $3$'' actually do have proper non-induced subgraphs with minimum degree $3$.  In addition, one can check that all the results and proofs given in \cite{EFGS} concerning graphs with ``no proper subgraphs of minimum degree $3$'' hold also for graphs with ``no proper induced subgraphs of minimum degree $3$''.  Therefore, it is plausible to assume that the word ``induced'' should be present in the statement of Conjecture~\ref{ErdosConjecture}. This also coincides with the interpretation of the concept 
in the paper of
Bollob\'as and Brightwell~\cite{BB}. 

Consequently throughout most of this paper will study Conjecture~\ref{ErdosConjecture} as it is stated above.  However, for the sake of completeness, in Section~\ref{NonInducedSection} we will diverge and consider the special case of Conjecture~\ref{ErdosConjecture} when $G$ contains neither induced nor non-induced subgraphs with minimum degree $3$.

The  main result of this paper is a disproof of Conjecture~\ref{ErdosConjecture}.  We prove the following.
\begin{theorem}\label{Counterexamples}
There is an infinite sequence of degree $3$-critical graphs $(G_n)_{n=1}^{\infty}$ which
do not contain a cycle of length $23$.
\end{theorem}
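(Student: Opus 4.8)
The plan is to construct an explicit infinite family of degree $3$-critical graphs and then argue that none of them contains a cycle of length exactly $23$. The construction should be modular: I would build a large graph by gluing together many copies of a small rigid ``gadget'' along a tree-like or path-like backbone, so that the global graph inherits the degree $3$-critical property from the local structure while the available cycle lengths are tightly controlled.

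\medskip

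First I would identify the right building block. Since degree $3$-critical graphs are exactly the graphs with $n$ vertices, $2n-2$ edges, and no proper subgraph $H$ with $2|H|-2$ edges (equivalently, by the Nash-Williams/Laman characterization recalled above, the unions of two edge-disjoint spanning trees that remain minimal), I would look for a gadget that is itself such a ``sparse but tight'' piece, with two or three designated attachment vertices of low degree. Gluing gadgets at these attachment points keeps the edge-to-vertex ratio at exactly $2n-2$ globally, and one can check that the only way a subgraph could achieve minimum degree $3$ is to contain a whole gadget, which forces it to be the entire graph. The key verification here is a counting/degeneracy argument: any proper induced subgraph must omit some vertex, and I would show by peeling off degree-$\le 2$ vertices (the $2$-degenerate property that all proper subgraphs enjoy) that the whole subgraph collapses.

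\medskip

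Next I would control the cycle spectrum. The idea is that in such a tree-of-gadgets, every cycle either lives inside a single gadget, or it must traverse the backbone and pass through a bottleneck of attachment vertices; in the latter case its length is a sum of ``leg lengths'' determined by leaf-to-leaf paths through the gadgets. This is precisely where the abstract's promise to resolve ``a related problem about leaf-to-leaf path lengths in trees'' enters: I would choose the gadget and the backbone so that every closed walk's length avoids $23$, by arranging that the elementary path-lengths between attachment points, together with the short intra-gadget cycles, generate a set of realizable cycle lengths that skips $23$. Concretely, if the short cycles have lengths all $< 23$ and the shortest cycle forced to use the backbone has length $> 23$, then $23$ is missed. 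Making this gap land exactly at $23$ (rather than some other value) is a matter of tuning the gadget's girth and its inter-attachment distances.

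\medskip

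The main obstacle, I expect, is the simultaneous satisfaction of two competing demands: the gadget must be rich enough that the overall graph is genuinely degree $3$-critical (it cannot have a proper subgraph of minimum degree $3$, which rules out gadgets that are themselves ``too loose'' at the joints), yet sparse enough that its only short cycles and only short leaf-to-leaf connections conspire to leave a hole at length $23$. Verifying the degree $3$-critical property for the infinite family — i.e.\ checking that \emph{no} proper subgraph, however it weaves through several gadgets, attains minimum degree $3$ — is the delicate part, and I would handle it with a global argument (peeling degree-$\le 2$ vertices from the boundary inward, or invoking the two-spanning-tree decomposition) rather than case analysis. Once that structural lemma is in hand, the absence of a $23$-cycle reduces to the comparatively concrete combinatorial computation of which cycle lengths the construction realizes.
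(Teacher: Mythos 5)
Your proposal is a strategy outline rather than a proof: no gadget is ever specified, so neither the degree $3$-criticality of the family nor the absence of a $23$-cycle is actually verified. More importantly, the mechanism you propose for missing length $23$ --- arranging that all intra-gadget cycles are shorter than $23$ while every cycle forced through the backbone is longer than $23$ --- is not how the counterexample can work and is almost certainly unrealizable. The paper's graphs contain cycles of every odd length up to $21$ \emph{and} many cycles longer than $23$; what is missing is the single value $23$, not an interval. Engineering a genuine gap in the cycle spectrum of a graph with $2n-2$ edges runs against its density (and against the Bollob\'as--Brightwell bounds on long cycles), and you offer no candidate gadget for which such a gap could even be checked.

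The two ideas your sketch is missing are these. First, the paper takes an \emph{even} $1$-$3$ tree $T$ (all leaves in one class of the bipartition, all degrees $1$ or $3$) and forms $G(T)$ by adding two adjacent apex vertices $x,y$ joined to every leaf of $T$; this is automatically degree $3$-critical, and since $G(T)-xy$ is bipartite, every odd cycle must use the edge $xy$, so $G(T)$ has a $23$-cycle if and only if $T$ has a leaf-leaf path of length $20$. This parity reduction is what replaces your ``gap'' mechanism and is the reason the targeted length is odd. Second, producing arbitrarily large even $1$-$3$ trees with no leaf-leaf path of length $20$ is itself the hard combinatorial core: the paper hangs perfect binary trees of depths $x_i-1$ off a path and reduces the problem to finding a $20$-avoiding odd-even sequence, i.e.\ positive integers $x_i\le 10$ with $x_i\equiv i\pmod 2$ and $x_i+x_j+\abs{i-j}\ne 20$ for all $i\ne j$, exhibited explicitly as a periodic sequence of period $24$; part (i) of Theorem 1.3 shows that no smaller even length can be avoided, so the number $23$ is forced rather than ``tuned.'' Finally, a small but real error in your setup: degree $3$-critical graphs are \emph{not} the same as graphs with $2n-2$ edges and no proper subgraph $H$ on $2\abs{H}-2$ edges --- the latter (rigidity circuits, unions of two spanning trees) form a related but different class, so an appeal to the two-spanning-tree decomposition would certify the wrong property.
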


In the process of proving this theorem, we will naturally arrive to a question of independent interest, concerning the various {\em leaf-leaf path} lengths 
(i.e., the lengths of paths going between two leaves) that must occur in a tree.
Obviously, if $T$ is just a path, then $T$ only has a single leaf-leaf path.  
However if $T$ has no degree $2$ vertices, then one would expect $T$ to have many different leaf-leaf path lengths.  Of particular relevance to Conjecture~\ref{ErdosConjecture} will be even $1$-$3$ trees.  A tree is called \emph{even} if all of its leaves are in the 
same class of the tree's unique bipartition and a tree is called a {\em $1$-$3$-tree} 
if every vertex has degree $1$ or $3$. On our way towards the proof of 
Theorem~\ref{Counterexamples} we determine the smallest even number 
which does not occur as a leaf-leaf path in 
every even $1$-$3$-tree.  
\begin{theorem}\label{TreeLengths}
\begin{enumerate}[(i)]
\item There is an integer $N_0$ such that every even $1$-$3$ tree $T$ with 
$|T|\geq N_0$ contains leaf-leaf paths of lengths $0,2,4,\dots, 18$.
\item There is an infinite family of even $1$-$3$ trees $(T_n)_{n=1} ^{\infty}$, such that $T_n$ contains no leaf-leaf path of length $20$.
\end{enumerate} 
\end{theorem}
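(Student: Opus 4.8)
The plan is to treat the two parts separately, but both rest on one elementary observation: since $T$ is an even $1$-$3$ tree, the two endpoints of any leaf-leaf path lie in the same class of the bipartition, so every leaf-leaf path has even length, and the length $0$ is realised trivially by any single leaf. It is also worth recording the exact leaf count: if $T$ has $I$ vertices of degree $3$ and $L$ leaves, then counting edges gives $2(L+I-1)=L+3I$, i.e.\ $L=I+2$. Writing $a_j$ for the number of degree-$3$ vertices with exactly $j$ leaf neighbours, we have $\sum_j a_j=I$ and $\sum_j j\,a_j=L$, whence $a_2+2a_3=2+a_0\ge 2$. In particular some degree-$3$ vertex has two leaf neighbours, so $T$ always contains a \emph{cherry} and hence a leaf-leaf path of length $2$. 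This is the seed from which the remaining short lengths must be grown.

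For part (i) I would prove the sharper, equivalent statement that for each fixed $k\in\{1,\dots,9\}$ there are only finitely many even $1$-$3$ trees containing no leaf-leaf path of length $2k$; taking $N_0$ to exceed the orders of all of them (over the nine values of $k$) then gives the theorem. The engine is a longest leaf-leaf path $\ell_0\,x_1\,x_2\cdots x_{2D-1}\,\ell_{2D}$. Each internal $x_i$ has a unique subtree $S_i$ hanging off the path; maximality of the path forces the depth $h_i$ of $S_i$ to satisfy $h_i\le\min(i,2D-i)$, and evenness forces $h_i\equiv i\pmod 2$, so that $x_1$ and $x_{2D-1}$ are themselves cherries. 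The leaves of $S_i$ then sit at controlled distances $i+(\text{depth in }S_i)$ from $\ell_0$, and the task reduces to showing that, once $T$ is large, these distances, the symmetric ones measured from $\ell_{2D}$, and the within-subtree distances together cover every even value up to $18$. I would argue that a missing even length below $20$ forces the hanging depths $h_i$ to grow in a rigid, essentially gap-free pattern as one moves inward along the path, which in turn bounds the number of degree-$3$ vertices and hence $|T|$.

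The hardest step in (i) is exactly this no-gap analysis, and it is here that the threshold appears: one must show that the smallest even length which can legitimately be skipped in an arbitrarily large even $1$-$3$ tree is $20$, and not any smaller even number. I expect this to come down to a finite case analysis of how shallow subtrees may be distributed along the path near its two ends while avoiding one prescribed short distance, the extremal configurations being localised copies of the trees built in part (ii). Verifying that $18$ rather than $16$ or $22$ is the correct cut-off is the delicate, and plausibly computer-assisted, core of the argument.

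For part (ii) I would exhibit the family by a self-similar gluing construction: a fixed base gadget, itself an even $1$-$3$ tree whose set of leaf-leaf distances realises $\{2,4,\dots,18\}$ while staying strictly below $20$, with a distinguished attachment point, and a backbone to which arbitrarily many copies are attached (each former attachment point becoming internal, so that the $1$-$3$ condition and the bipartition parity are preserved). Inside a single copy all leaf-leaf distances are at most $18$, hence never $20$, and the attachment distances along the backbone are to be chosen so that every leaf-leaf path meeting two distinct copies has length differing from $20$; letting the number of copies grow gives the unbounded family $(T_n)$. The main obstacle in (ii) is precisely that the counting above forces cherries: the backbone cannot keep all its attachment points far apart, so some pair sits at distance $2$, and one must design the gadgets hung at such a pair so that their leaf-to-attachment depths never sum, together with the separating distance, to exactly $20$. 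Reconciling this local cherry constraint with the requirement that some copy still realises the full range $2,\dots,18$ is the crux of the construction.
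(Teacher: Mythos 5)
Your proposal correctly identifies the right reduction --- hang subtrees off a long path and study which sums (distance along the path) $+$ (leaf-depths into the two subtrees) are realised --- but at the two places where the actual mathematical content lives, you explicitly defer rather than argue, so there is a genuine gap in both parts. For part (ii), saying that ``the attachment distances along the backbone are to be chosen so that every leaf-leaf path meeting two distinct copies has length differing from $20$'' is not a construction; it is a restatement of the problem. The paper isolates exactly this as the existence of a \emph{$20$-avoiding odd-even sequence}, i.e.\ a sequence of positive integers $a_i\le 10$ with $a_i\equiv i\pmod 2$ and $a_i+a_j+|i-j|\ne 20$ for all $i\ne j$, and then \emph{exhibits} one: an explicit periodic sequence of period $24$, verified via the ``fault line'' picture. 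Without producing such a sequence (or an equivalent explicit gadget spacing), part (ii) is unproven. Note also that your worry about cherries forcing nearby attachment points is a non-issue in the paper's construction: the hanging trees are perfect binary trees of depth $x_i-1\ge 0$, so the cherries live inside the gadgets, contribute only leaf-leaf paths of length at most $2\max_i x_i=18$, and never threaten length $20$.

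For part (i) the gap is larger. You write that one ``must show that a missing even length below $20$ forces the hanging depths to grow in a rigid, essentially gap-free pattern,'' and that verifying the cut-off $18$ is ``plausibly computer-assisted'' --- but that verification \emph{is} the theorem, and no argument is supplied. The paper's route is: (a) an equivalence (their Proposition~4.2-analogue, \textit{TreeSequenceEquivalence}) between ``every large even $1$-$3$ tree has a leaf-leaf path of length $2m$'' and ``no $2m$-avoiding odd-even sequence exists,'' whose tree-to-sequence direction takes a long path $v_1,\dots,v_{N_1}$, sets $x_i$ to be the \emph{shortest} distance from $v_i$ to a leaf of its hanging subtree (handling the case $x_i>m$ separately via a lemma that deep even binary trees contain all short leaf-leaf path lengths), and pigeonholes a repeated block to produce a periodic sequence; and then (b) a purely combinatorial, multi-lemma case analysis showing no $18$-avoiding odd-even sequence exists, organised around ``excluded lines'' of slope $-1$ in the graph of the sequence (no four, then no three, then no two consecutive ones, etc.). One technical caution about your version of (a): you take a \emph{longest} leaf-leaf path and speak of the \emph{depth} of each hanging subtree, but the quantity that governs which lengths are forced is the set of available leaf-distances from each $v_i$, and for general (non-perfect) hanging subtrees the depth alone does not control this; the paper's use of the shortest leaf-distance, together with the separate deep-subtree lemma, is what makes the reduction airtight. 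As it stands, your text is a correct high-level plan whose two hard kernels --- the explicit $20$-avoiding configuration and the impossibility of an $18$-avoiding one --- remain to be done.
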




Part $(ii)$ of Theorem~\ref{TreeLengths} will be used to construct our counterexample
to Conjecture~\ref{ErdosConjecture}, while part $(i)$ shows
that our method, as is, can not deliver a stronger counterexample. 
Hence it would be interesting to determine the shortest cycle length which is not present in every sufficiently large degree $3$-critical graph. Theorem~\ref{Counterexamples} shows that this number is at most $23$, while Erd\H{o}s et al.~\cite{EFGS} showed that it is at least $6$. They  also mention that their methods could be extended to work for $7$. 
In  Section~\ref{C6Section} we verify their statement, by giving a short proof that every 
degree $3$-critical graph must contain $C_6$.

Finally, we revisit Conjecture~\ref{ErdosConjecture} 
with the word ``induced'' removed from the definition of degree $3$-critical.  
We characterize all $n$-vertex graph with $2n-2$ edges and no proper (not necessarily induced) subgraph with minimum degree $3$ and 
show that the conjecture is true for them in a much stronger form.
\begin{theorem}\label{NonInduced}
Let $G$ be a graph with $n$ vertices, $2n-2$ edges and no proper 
subgraph with minimum degree $3$.  Then $G$ is pancyclic, that is, it 
contains cycles of length $i$ for every $i=3,4,5,\dots,$ and $n$.
\end{theorem}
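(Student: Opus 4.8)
The plan is to first pin down the degree structure of $G$, then obtain an explicit recursive description of all such $G$, and finally read off pancyclicity from that description.

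First I would record two elementary constraints. Since $G$ has $2n-2$ edges, the observation from the introduction that such a graph contains a(n induced) subgraph of minimum degree $3$ applies; this subgraph cannot be proper, so it is $G$ itself, giving $\delta(G)\ge 3$, and as $\sum_v d(v)=4n-4<4n$ we in fact have $\delta(G)=3$. Call a vertex \emph{big} if it has degree at least $4$ and \emph{small} if it has degree exactly $3$. The second constraint is that the big vertices form an independent set: if $x\sim y$ with $d(x),d(y)\ge 4$, then $G-xy$ still has minimum degree $3$ and is a proper subgraph, a contradiction. Equivalently, the hypothesis says precisely that $G-e$ is $2$-degenerate for every edge $e$, which will be the working form of the condition.

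Next I would set up a reduction that stays inside the class. Given a small vertex $v$ with neighbours $x,y,z$ such that $x$ is big and $y\not\sim z$, let $G'=(G-v)+yz$. A direct degree count shows $G'$ has $n-1$ vertices, $2(n-1)-2$ edges, and minimum degree $3$ (only $x$ loses a degree, and it was big), and I would verify that $G'$ again has no proper subgraph of minimum degree $3$; call such a $v$ \emph{reducible}. The heart of the characterization is the lemma that, apart from a short explicit list of base graphs, every graph in the class has a reducible vertex. The base graphs are small --- for instance $K_4$ and the graph obtained from $K_{2,4}$ by adding a perfect matching on the four-element side --- and every other graph arises from one of them by repeatedly reversing the reduction (subdivide an edge $yz$ by a new vertex $v$ and join $v$ to an existing big vertex). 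I would prove the lemma by showing that irreducibility forces every small vertex to have its neighbourhood either ``triangulated'' or entirely inside the small part, and that this quickly produces a minimum-degree-$3$ subgraph among the small vertices alone unless the graph is one of the base cases.

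Finally I would prove pancyclicity by induction along this reduction, strengthening the statement to edge-pancyclicity (every edge lies on a cycle of each length $3,4,\dots,n$) so that the induction carries. The base graphs are checked by hand. For the step, $G$ arises from an edge-pancyclic $G'$ by subdividing $yz$ with $v$ and adding $vx$: a cycle of $G'$ that avoids $yz$ is still a cycle of $G$, while one that uses $yz$ becomes a cycle of $G$ one longer after rerouting through $v$; together with the short cycles through $v$, which arise once the big neighbour $x$ is (or is chosen to be) adjacent to one of $y,z$, and with the Hamilton cycle of $G$ obtained from a Hamilton cycle of $G'$ through $yz$, this yields cycles of every length $3,\dots,n$ through every edge.

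The main obstacle is the structural lemma of the third paragraph: proving that reducibility fails only for the finitely many base graphs, and that the reduced graph $G'$ genuinely remains in the class. Verifying that $G'-e$ stays $2$-degenerate after the reduction, and ruling out all irreducible graphs beyond the base list, is where the real work lies; by comparison, the degree constraints of the second step and the cycle-lifting of the last step are routine once the reduction is in place.
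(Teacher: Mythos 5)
Your overall strategy --- delete a degree-$3$ vertex $v$ that has a big neighbour and two mutually non-adjacent other neighbours $y,z$, add the edge $yz$, classify the irreducible graphs, and induct --- is close in spirit to the paper, which instead finds an induced ``fan'' $H_m$ whose internal vertices have no outside neighbours, contracts it to a single edge between its connectors, proves that the class consists exactly of wheels and of two copies of $H_i$, $H_j$ glued at their connectors, and only then reads off pancyclicity from that explicit description. Your verification that $G'$ stays in the class does go through by the same lift-a-subgraph argument the paper uses for its contraction (Claim~\ref{HmContraction}), and your independence of the big vertices is Observation~\ref{ObservationDegree4}. But the classification of the irreducible graphs is the entire content of the structure theorem, and you only gesture at it; the paper needs the degeneracy ordering of Lemma~\ref{Ordering} and a substantial case analysis to reach the analogous conclusion, so ``irreducibility \dots quickly produces a minimum-degree-$3$ subgraph among the small vertices'' is a placeholder, not an argument. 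You do flag this honestly, so I count it as an acknowledged gap rather than an error.

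The genuine error is in the final induction: the strengthening to edge-pancyclicity is false for one of your own base graphs. In $K_{2,4}$ plus a perfect matching on the four-element side (the glued $H_4$--$H_4$), a matching edge $v_1v_2$ lies on no $4$-cycle: the two remaining vertices of such a cycle would have to be the two vertices of the $2$-side, and those are non-adjacent. So the induction hypothesis already fails at the base case. The same phenomenon recurs during the induction: whenever the reduction creates the short-side edge of an $H_4$, that edge again carries no $4$-cycle, so you cannot always produce the $(\ell+1)$-cycle of $G$ by lengthening an $\ell$-cycle of $G'$ through the new edge $yz$; sometimes it must instead come from a cycle of $G'$ avoiding $yz$, or from a cycle of $G$ using the edge $vx$. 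Designing an induction hypothesis that is strong enough to supply all lengths through (or around) the one specific edge $yz$ created by the reduction, yet weak enough to hold for the base graphs, is precisely the difficulty your plan elides. The paper sidesteps it entirely: once the members of the class are known explicitly to be wheels or glued $H_i$--$H_j$'s, pancyclicity of each family is a direct check, with no need to control cycles through a prescribed edge.
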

Theorem~\ref{NonInduced} will follow from a structure theorem which we shall prove about graphs with  $n$ vertices, $2n-2$ edges and no proper (not necessarily induced) subgraphs with minimum degree $3$.  It will turn out that there are only two particular 
families of graphs satisfying these conditions.  
One of them is the family of wheels and the other is a family of graphs obtained from a wheel by replacing one of its edges with a certain other graph.  

The structure of this paper is as follows.   In Section~\ref{CounterexampleSection} we construct our counterexamples to Conjecture~\ref{ErdosConjecture} via proving part $(ii)$ of Theorem~\ref{TreeLengths} and Theorem~\ref{Counterexamples}.  In Section~\ref{TreeLengthsSection} we study necessary leaf-leaf path lengths in even $1$-$3$ 
trees and prove part $(i)$ Theorem~\ref{TreeLengths}. 
In Section~\ref{NonInducedSection} we prove the weakening of Conjecture~\ref{ErdosConjecture} when the word ``induced'' is removed from the definition. 
In Section~\ref{C6Section} we 
show that degree $3$-critical graphs on at least $6$ vertices always contain a six-cycle. 
   In Section~\ref{RemarksSection} we make some concluding remarks and pose
several interesting open problems raised naturally by our results.
Our notation follows mostly that of \cite{BollobasModernGraphTheory}.

\section{Counterexample to Conjecture~\ref{ErdosConjecture}}\label{CounterexampleSection}
The goal of this section is to  prove Theorem~\ref{Counterexamples}. First we need some preliminary results about 1-3 trees.

Given a tree $T$, define $G(T)$ to be the graph formed from $T$ by adding two new vertices $x$ and $y$, the edge $xy$ as well as every edge between $\{x,y\}$ and the leaves of $T$. See Figure~\ref{FigureGT} for an example of a graph $G(T)$.

Notice that if $T$ is a $1$-$3$ tree then $G(T)$ is degree $3$-critical. 
In the case when $T$ is an even $1$-$3$ tree, the cycles of $G(T)$ have nice properties.
\begin{figure}[htb]
 \centering
	\begin{tikzpicture}[scale=0.8]
		\draw[rounded corners=5mm, fill=gray!20] (-0.5,-0.5) rectangle (8.5,4.5);
	
		\draw[edge] (0,0) -- (2,3) -- (2.5,2);
		\draw[edge] (1,0) -- (1.5,1) -- (2,0);
		\draw[edge] (3,0) -- (3.5,1) -- (4,0);
		\draw[edge] (1.5,1) -- (2.5,2) -- (3.5,1);
		\draw[edge] (5,0) -- (5.5,1) -- (6,0);
		\draw[edge] (7,0) -- (7.5,1) -- (8,0);
		\draw[edge] (2,3) -- (4,4) -- (5.5,1) (4,4) -- (7.5,1);
		
		\foreach \x in {0, ..., 8}
			\draw[edge] (2.5,-2) -- (\x,0) -- (5.5,-2);
		
		\draw[edge] (2.5,-2) -- (5.5,-2);

		\node[vertex] at (0,0) {};
		\node[vertex] at (1,0) {};
		\node[vertex] at (2,0) {};
		\node[vertex] at (3,0) {};
		\node[vertex] at (4,0) {};
		\node[vertex] at (5,0) {};
		\node[vertex] at (6,0) {};
		\node[vertex] at (7,0) {};
		\node[vertex] at (8,0) {};
		\node[vertex] at (1.5,1) {};
		\node[vertex] at (3.5,1) {};
		\node[vertex] at (5.5,1) {};
		\node[vertex] at (7.5,1) {};
		\node[vertex] at (2.5,2) {};
		\node[vertex] at (2,3) {};
		\node[vertex] at (4,4) {};
		
		\node[vertex, label={below:$x$}] at (2.5,-2) {};
		\node[vertex, label={below:$y$}] at (5.5,-2) {};
		
		\node at (7,3) {$T$};
	\end{tikzpicture}

\caption{The graph $G(T)$ for an even $1$-$3$ tree $T$.\label{FigureGT}}
\end{figure}
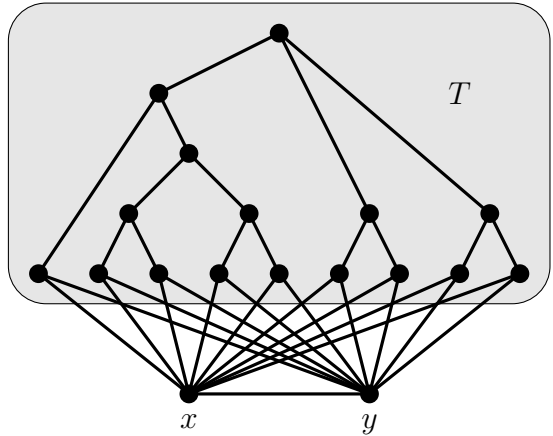

\begin{lemma}\label{CycleLengths}
Let $T$ be an even $1$-$3$-tree. Then the following hold:
\begin{enumerate}[(i)]
\item  The graph $G(T)$ contains a cycle of length $2k+1$ $\iff$ $T$ contains a leaf-leaf path of length $2k-2$.
\item  The graph $G(T)$ contains a cycle of length $2k$ $\iff$ $T$ contains two vertex-disjoint leaf-leaf paths $P_1$ and $P_2$ such that 
$e(P_1) + e(P_2) = 2k - 4$ or $T$ contains a leaf-leaf path of length $2k-2$.
\end{enumerate}
\end{lemma}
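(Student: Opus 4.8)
The plan is to exploit the fact that $T$ is acyclic, so every cycle of $G(T)$ must pass through at least one of the two added vertices $x$ and $y$. In $G(T)$ the only neighbours of $x$ are $y$ and the leaves of $T$, and likewise for $y$; hence whenever a cycle enters or leaves the tree part it does so through leaves, and the portion of the cycle lying inside $T$ is a union of tree paths between those leaves. First I would classify every cycle $C$ of $G(T)$ according to its interaction with $\{x,y\}$ into three mutually exclusive and exhaustive types: (Type 1) $C$ uses exactly one of $x,y$; (Type 2) $C$ uses both $x$ and $y$ together with the edge $xy$; and (Type 3) $C$ uses both $x$ and $y$ but not the edge $xy$.

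Next I would compute the length of a cycle of each type in terms of the tree paths it traverses. A Type 1 cycle consists of a single leaf-leaf path $P$ of length $p$ together with the two edges joining its endpoints to the single used vertex, so $e(C)=p+2$. A Type 2 cycle consists of a single leaf-leaf path $P$ of length $p$, the edge $xy$, and the two edges joining the endpoints of $P$ to $x$ and to $y$, so $e(C)=p+3$. A Type 3 cycle is the union of two vertex-disjoint leaf-leaf paths $P_1,P_2$ of lengths $p_1,p_2$ together with the four edges joining their four endpoints to $x$ and $y$, so $e(C)=p_1+p_2+4$. The crucial use of the hypothesis that $T$ is \emph{even} enters here: every leaf-leaf path of $T$ has even length, whence Type 1 and Type 3 cycles are even while Type 2 cycles are odd. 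Thus the parity of a cycle determines its type---odd cycles are exactly the Type 2 cycles, and even cycles are exactly the Type 1 and Type 3 cycles.

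From this the two claimed equivalences follow. For part (i), a cycle of odd length $2k+1$ exists if and only if some Type 2 cycle has length $2k+1$, i.e. if and only if $T$ has a leaf-leaf path of length $(2k+1)-3=2k-2$. For part (ii), a cycle of even length $2k$ exists if and only if some Type 1 cycle has length $2k$ (equivalently, a leaf-leaf path of length $2k-2$) or some Type 3 cycle has length $2k$ (equivalently, two vertex-disjoint leaf-leaf paths with $e(P_1)+e(P_2)=2k-4$). In each case I would verify both directions: the forward direction by the structural analysis above, and the backward direction by explicitly building the cycle from the given path(s)---for instance, from vertex-disjoint leaf-leaf paths $a\cdots b$ and $c\cdots d$ one forms the cycle $x,a,\dots,b,y,c,\dots,d,x$ of length $p_1+p_2+4$.

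The points requiring care, rather than a genuine obstacle, are the degenerate cases where a leaf-leaf path is trivial (length $0$, i.e. a single leaf): one must check whether the corresponding construction still yields a simple cycle. A trivial path in Type 2 gives the triangle $x,\ell,y$, so length-$0$ paths legitimately account for triangles in part (i); two trivial disjoint paths in Type 3 give the quadrilateral $x,a,y,c,x$; but a trivial path in Type 1 does \emph{not} produce a cycle, consistently with the fact that the first alternative of part (ii) only becomes operative once $2k-2\ge 2$. The single place where one argues slightly beyond bookkeeping is confirming that deleting $x$ and $y$ from a Type 3 cycle really leaves two vertex-disjoint leaf-leaf paths, and conversely that vertex-disjoint input paths yield a simple cycle.
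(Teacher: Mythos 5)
Your proof is correct and follows essentially the same route as the paper: classify each cycle by its intersection with $\{x,y\}$ and whether it uses the edge $xy$, compute its length from the tree paths it traverses, and use the evenness of $T$ (all leaf-leaf paths have even length) to match parities, with explicit constructions for the converse directions. The paper phrases the parity step for part (i) via the bipartiteness of $G(T)-xy$, but this is the same observation, and your attention to the degenerate length-$0$ paths is a harmless extra check.
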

\begin{proof}
For (i), let $C$ be a $(2k+1)$-cycle in $G(T)$. Notice that since $T$ is an even tree, 
$G(T)-xy$ is bipartite. So $C$ must contain the edge $xy$ and hence
$C-x-y$ must be a leaf-leaf path of length $2k-2$ as required.  For the converse, 
notice that any path $P\subseteq T$ of length $\ell$ between leaves $u_1$ and $u_2$ 
can be turned into a cycle of length $\ell + 3$ by adding the vertices $x$ and $y$ as well as the edges $u_1x, xy, yu_2$ of $G(T)$.

For (ii), let $C$ now be a $2k$-cycle in $G(T)$.  If $|C\cap \{x,y\}|=1$ then $C-x-y$ is a leaf-leaf path in $T$ of length $2k-2$.  Now suppose that both $x,y\in V(C)$.  Notice that since $T$ is even, all leaf-leaf paths in $T$ have even length.  Therefore, all cycles containing the edge $xy$ in $G(T)$ must have odd length, and hence $C$ does not contain $xy$.  
Thus $C-x-y$ consists of two vertex-disjoint leaf-leaf paths 
$P_1, P_2 \subseteq T$ such that their lengths sum to $2k-4$, as required.  For the converse, first notice that any leaf-leaf path $P\subseteq T$ of length $\ell$ can be turned into a cycle of length $\ell + 2$ in $G(T)$ by adding the vertex $x$ and the edges between the endpoints of $P$ and $x$.  Also, any two vertex-disjoint leaf-leaf paths $P_1\subseteq T$ of length $\ell_1$ with endpoints $u_1, w_1$ and  $P_2\subseteq T$ of length $\ell_2$ with endpoints $u_2$ and $w_2$ can be turned into a cycle of length 
$\ell_1 + \ell_2 + 4$ in $G(T)$ by adding the vertices $x$ and $y$, and 
the edges $u_1x, xu_2, w_2y$, and $yw_1$ of $G(T)$.
\end{proof}

We say that a rooted binary tree $T$ is \emph{perfect} if all non-leaf vertices have two children and all root-leaf paths have the same length $d$
(or, alternatively if $|V(T)|=2^{d+1}-1$ where $d$ is the depth of $T$).  
Given a sequence of positive integers $x_1,\dots, x_n$, we define a tree $T(x_1\dots x_n)$ as follows.  First consider a path on $n$ vertices with vertex sequence $v_1,\dots,v_n$.  For each $i$ satisfying $2\leq i \leq n-1$, add a perfect rooted binary tree $T_i$ of depth $x_i-1$ with root vertex $u_i$.  For $i=1$ and $n$ add two perfect rooted binary trees each:
trees $T^{(1)}_1$ and $T^{(1)}_1$ of depths $x_1-1$ with root vertices $u^{(1)}_1$ and
$u^{(2)}_1$, respectively and trees $T^{(1)}_n$ and $T^{(1)}_n$ of depths $x_n-1$ with root vertices $u^{(1)}_n$ and $u^{(2)}_n$, respectively. 
Finally, for each $i, 2\leq i\leq n-1$, we add the edges $v_iu_i$, as well as the 
edges $v_1u^{(1)}_1$, $v_1u^{(2)}_1$, $v_nu^{(1)}_n$, and $v_nu^{(2)}_n$.
See Figure~\ref{FigureTree} for an example of a graph $G(T)$.

Notice that for any sequence  $x_1,\dots, x_n$ of positive integers, 
the tree $T(x_1\dots x_n)$ is a $1$-$3$ tree.  We will mainly be concerned with {\em odd-even sequences}, 
that is, sequences for which $x_i \equiv i \pmod{2}$ for all $i$ 
(that is, $x_i$ is even $\iff$ $i$ is even). 
It turns out that for odd-even sequences the 
leaf-leaf path length of the tree $T(x_1\dots x_n)$ are easy to characterize.
\begin{figure}
 \centering
\begin{tikzpicture}[scale=0.8]
  		\draw[edge] (0,0) -- (8,0);
  		
  		\begin{scope}[rotate=-45]
  			\draw[edge] (0,0) -- (0,-1);
  			\draw[edge] (-0.5,-2) -- (0,-1) -- (0.5,-2);
  		\end{scope}
  		
  		\draw[edge] (0,0) -- (0,-1);
  		\draw[edge] (-0.5,-2) -- (0,-1) -- (0.5,-2);
  		
  		\draw[edge] (2,0) -- (2,-1);
  		\draw[edge] (1.5,-2) -- (2,-1) -- (2.5,-2);
  		\draw[edge] (1.25,-3) -- (1.5,-2) -- (1.75,-3);
  		\draw[edge] (2.25,-3) -- (2.5,-2) -- (2.75,-3);
  		
  		\draw[edge] (4,0) -- (4,-1);
  		\draw[edge] (3.5,-2) -- (4,-1) -- (4.5,-2);
  		
  		\draw[edge] (6,0) -- (6,-1);
  		\draw[edge] (5.5,-2) -- (6,-1) -- (6.5,-2);
  		\draw[edge] (5.25,-3) -- (5.5,-2) -- (5.75,-3);
  		\draw[edge] (6.25,-3) -- (6.5,-2) -- (6.75,-3);
  		
  		\draw[edge] (8,0) -- (8,-1);
  		\draw[edge] (7.5,-2) -- (8,-1) -- (8.5,-2);
  		
  		\begin{scope}[xshift=8cm, rotate=45]
  			\draw[edge] (0,0) -- (0,-1);
  			\draw[edge] (-0.5,-2) -- (0,-1) -- (0.5,-2);
  		\end{scope}
  		
  		\node[vertex, label={above:$v_1$}] at (0,0) {};
  		\node[vertex, label={above:$v_2$}] at (2,0) {};
  		\node[vertex, label={above:$v_3$}] at (4,0) {};
  		\node[vertex, label={above:$v_4$}] at (6,0) {};
  		\node[vertex, label={above:$v_5$}] at (8,0) {};
  		
  		\begin{scope}[rotate=-45]
  			\node[vertex] at (0,-1) {};
  			\node[vertex] at (-0.5,-2) {};
  			\node[vertex] at (0.5,-2) {};
  		\end{scope}
  		
  		\node[vertex] at (0,-1) {};
  		\node[vertex] at (-0.5,-2) {};
  		\node[vertex] at (0.5,-2) {};
  		
  		\node[vertex] at (2,-1) {};
  		\node[vertex] at (1.5,-2) {};
  		\node[vertex] at (2.5,-2) {};
  		\node[vertex] at (1.25,-3) {};
  		\node[vertex] at (1.75,-3) {};
  		\node[vertex] at (2.25,-3) {};
  		\node[vertex] at (2.75,-3) {};
  		
  		\node[vertex] at (4,-1) {};
  		\node[vertex] at (3.5,-2) {};
  		\node[vertex] at (4.5,-2) {};
  		
  		\node[vertex] at (6,-1) {};
  		\node[vertex] at (5.5,-2) {};
  		\node[vertex] at (6.5,-2) {};
  		\node[vertex] at (5.25,-3) {};
  		\node[vertex] at (5.75,-3) {};
  		\node[vertex] at (6.25,-3) {};
  		\node[vertex] at (6.75,-3) {};
  		
  		\node[vertex] at (8,-1) {};
  		\node[vertex] at (7.5,-2) {};
  		\node[vertex] at (8.5,-2) {};
  		
  		\begin{scope}[xshift=8cm, rotate=45]
  			\node[vertex] at (0,-1) {};
  			\node[vertex] at (-0.5,-2) {};
  			\node[vertex] at (0.5,-2) {};
  		\end{scope}
  	\end{tikzpicture}

\caption{The $1$-$3$ tree $T(2, 3, 2, 3, 2)$. \label{FigureTree}}
\end{figure}
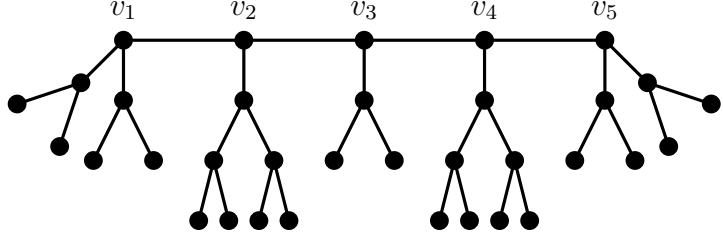

\begin{lemma}\label{PathLengths}
Let $x_1,\dots, x_n$ be an odd-even sequence. Then we have the following:
\begin{enumerate}[(i)]
\item The tree $T(x_1\dots x_n)$ contains no leaf-leaf path of odd length. In particular, $T(x_1, \ldots , x_n)$ is an even tree.
\item  For every integer $m$, $0\leq m< \max_{i=1} ^{n} x_i$, the tree
$T(x_1\dots x_n)$ contains a leaf-leaf path of length $2m$.
\item For $m=\max_{i=1} ^{n} x_i$, the tree $T(x_1\dots x_n)$ contains a leaf-leaf path 
of length $2m$ if and only if either $\max\{x_1,x_n\}=\max_{i=1} ^{n} x_i$ or  
there are two distinct integers $i$ and $j$ such that  $x_i+x_j+|i-j|=2m$.
\item For every $m> \max_{i=1} ^{n} x_i$, the tree $T(x_1\dots x_n)$ contains 
a leaf-leaf path of length $2m$ if and only if there are two distinct integers $i$ and 
$j$ such that $x_i+x_j+|i-j|=2m$.
\end{enumerate}
\end{lemma}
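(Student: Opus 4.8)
The plan is to classify every leaf-leaf path of $T = T(x_1\dots x_n)$ by the locations of its two endpoint-leaves and read off the attainable lengths directly. First I would record two distance facts. In the perfect binary tree of depth $x_i-1$ rooted at $u_i$, every leaf sits at distance $x_i-1$ from $u_i$, hence at distance $x_i$ from the spine vertex $v_i$ (via the edge $v_iu_i$); and along the spine $v_1\dots v_n$ the distance between $v_i$ and $v_j$ is $|i-j|$. Since a leaf-leaf path is determined by its two endpoints, I would distinguish three types according to its topmost vertex (apex): \emph{type (a)}, both leaves in the same binary tree, so the path stays inside it with apex the leaves' common ancestor at some depth $t$ and length $2(x_i-1-t)$, which for $t=0,\dots,x_i-2$ gives exactly the even lengths $2,4,\dots,2(x_i-1)$; \emph{type (b)}, the two leaves in the two distinct trees hanging from the same endpoint $v_1$ (or $v_n$), so the path runs up to $v_1$ and back, of length $2x_1$ (resp.\ $2x_n$); and \emph{type (c)}, leaves in trees hanging from distinct spine vertices $v_i,v_j$ with $i\neq j$, so the path goes up to $v_i$, along the spine, and down, of length $x_i+|i-j|+x_j$. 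These three types are exhaustive.

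For part (i), types (a) and (b) have visibly even length, and for type (c) the odd-even hypothesis gives $x_i\equiv i$, $x_j\equiv j$, and $|i-j|\equiv i+j\pmod 2$, so $x_i+x_j+|i-j|\equiv 2(i+j)\equiv 0\pmod 2$; hence every leaf-leaf path is even, which forces all leaves into one bipartition class, so $T$ is an even tree. For part (ii), I would note that the trivial one-vertex path has length $0$, while applying type (a) to a tree attaining $x_i=M:=\max_{i=1}^{n}x_i$ realizes every even length $2,4,\dots,2(M-1)$; together these cover all $2m$ with $0\le m<M$.

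Parts (iii) and (iv) hinge on a single observation about which types can reach large lengths: type (a) contributes lengths at most $2(M-1)$, and type (b) contributes only $2x_1,2x_n\le 2M$, so any target $2m$ with $m\ge M$ can be reached only by type (c), and for $m=M$ additionally by type (b). Thus for $m=M$ the length $2M$ occurs iff a type-(b) path attains it (i.e.\ $2x_1=2M$ or $2x_n=2M$, equivalently $\max\{x_1,x_n\}=M$) or a type-(c) path attains it (i.e.\ $x_i+x_j+|i-j|=2M$ for distinct $i,j$); and for $m>M$ type (b) is ruled out, leaving exactly the type-(c) criterion.

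The main obstacle is the classification step itself: verifying that the three types are genuinely exhaustive, and keeping the two endpoint trees straight so that the ``same endpoint vertex'' case (type (b), giving $2x_1$ or $2x_n$) is cleanly separated from the ``distinct spine vertices'' case (type (c), the ``distinct $i,j$'' clause). This separation is precisely what produces the two alternatives in part (iii)—the $\max\{x_1,x_n\}=M$ condition versus the $x_i+x_j+|i-j|=2m$ condition—and its disappearance for $m>M$ is what collapses part (iv) to the single type-(c) criterion. Everything else is bookkeeping, with the parity computation in part (i) being the only place the odd-even hypothesis is used.
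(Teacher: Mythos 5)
Your proposal is correct and follows essentially the same route as the paper: your three types (a), (b), (c) correspond exactly to the paper's classification of a leaf-leaf path by its intersection with the spine $v_1,\dots,v_n$ being empty, a single vertex ($v_1$ or $v_n$), or a segment, and both arguments then read off the lengths per type and realize them via the fact that a perfect binary tree of depth $d$ has leaf-leaf paths of all even lengths $0,2,\dots,2d$. The parity computation for part (i) and the threshold argument separating parts (iii) and (iv) match the paper's reasoning.
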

\begin{proof}
Leaf-leaf paths of $T(x_1, \ldots , x_n)$ can be classified based on their intersection with the
path $v_1, \ldots , v_n$. Note that this intersection is always a (potentially empty) path. 

If the intersection is empty then the path is a leaf-leaf path of a perfect binary tree of depth $x_i-1$ for some $i$, 
and hence its length is $2m$ for some $m$, $0\leq m< \max_{i=1} ^{n} x_i$.  

If the intersection is a single vertex, then this vertex must be either $v_1$ or $v_n$. 
Then the  path is a leaf-leaf path going through the root in one of the perfect binary trees 
on $V(T^{(1)}_1)\cup V(T^{(2)}_1) \cup \{v_1\}$ and $V(T^{(1)}_n)\cup V(T^{(2)}_n)\cup\{v_n\}$ of depths $x_1$ and $x_n$, respectively,
and hence its length is $2x_1$ or $2x_n$, respectively.

If the intersection is a segment $v_i, \ldots , v_j$ for some $1\leq i < j \leq n$, then the path has length $x_i + j-i + x_j$. 
This implies the ``only if '' part of (iii) and (iv). 
Note also that all these paths have even length ($x_i+j-i+x_j$ is even because $(x_1, \ldots , x_n)$ is an odd-even sequence), and so (i) holds. 

For (ii) and the ``if'' part of (iii) and (iv) one must only note that a perfect tree of depth $d$ contains a leaf-leaf path of every even length
$0, 2, \dots, 2d$ and hence all leaf-leaf path-lengths given by the classification can actually be realized.
\end{proof}

We now produce a sequence of integers $(x_n)_{n=1}^{\infty}$ such that for every $n$, the tree $T(x_1\dots x_n)$ will not have leaf-leaf paths of length $20$.

\subsection{$k$-avoiding sequences}

We will be concerned with two-sided sequences $\seq{a}$ of positive integers. Again, we say that such a sequence is an \emph{odd-even sequence} if $a_i \equiv i \pmod{2}$ for all $i \in \Z$.

\begin{definition}
Let $k$ be a positive even integer. A two-sided sequence $\seq{x}$ of positive integers is called \emph{$k$-avoiding} if $a_i \leq k / 2$ for all $i \in \Z$ and if for every $i, j \in \Z$, $i \neq j$, we have $a_i + a_j + \abs{i - j} \neq k$.
\end{definition}

In order to check if an odd-even sequence $\seq{a}$ with $a_i \leq k/2$ for all $i \in \Z$ is $k$-avoiding, consider the graph $\set{(i, a_i)}{i \in \Z}$ of the sequence. Call a point $(x, y)\in \Z\times [ 1,k/2]$ 
\emph{in conflict} with another point $(z,w)\in \Z\times[1,k/2]$, $(z, w) \neq (x, y)$, if $y + w + \abs{x - z} = k$. Notice that the points 
$(x,y)$ in conflict with a fixed point $(c, d)$ lie on the two diagonal lines $y = -x + (k + c - d)$ and $y = x + (k - c - d)$. 
Since being in conflict is a symmetric relation we can say that we {\em blame} a conflict on the point with lower first coordinate 
(the first coordinates of points in conflict cannot be equal). 
Then the points $(x,y) \in \Z\times [1,k/2]$, whose conflicts with $(c, d)$ are blamed on $(c, d)$
lie on the single line $y = -x + (k + c - d)$. Indeed, the first coordinates of a point 
$(x,y)$ on the other diagonal line is $x=y-k+c+d \leq k/2 - k + c + k/2 $ at most $c$, hence these conflicts are not blamed on $(c,d)$.
We define the \emph{fault line} of the point $(c,d)$ to be the line $y = -x + (k + c - d)$. 
From the above discussion we obtain the following proposition.
\begin{proposition}\label{FaultLine}
A sequence $\seq{a}$ is $k$-avoiding if, and only if, there do not exist two distinct indices $i$ and $j$ such that $(i,a_i)$ lies on the fault line of $(j,a_j)$.
\end{proposition}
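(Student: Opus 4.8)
The plan is to prove the statement by reducing it to a single pairwise equivalence and then reading off the result. Throughout I work under the standing assumption of the surrounding paragraph, namely $a_i \leq k/2$ for all $i$; with this in force, the definition of $k$-avoiding says exactly that no two distinct points $(i,a_i)$ and $(j,a_j)$ of the graph $\set{(i,a_i)}{i \in \Z}$ of the sequence are in conflict. So the whole proposition will follow once I establish: for distinct indices $i$ and $j$, the points $(i,a_i)$ and $(j,a_j)$ are in conflict if and only if one of them lies on the fault line of the other. The preceding discussion has in fact already done the geometric bookkeeping for this; I would just turn it into a clean two-way implication.

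For the forward direction I would start from a conflict $a_i + a_j + \abs{i-j} = k$ and first record that conflicting points have distinct first coordinates (if $i=j$ then $a_i=a_j=k/2$ and the points coincide), so I may assume $i > j$ and drop the absolute value. The conflict equation then rearranges to $a_i = -i + (k + j - a_j)$, which is precisely the equation of the fault line of $(j,a_j)$ evaluated at $x = i$, so $(i,a_i)$ lies on that fault line. For the reverse direction I would start from $(i,a_i)$ lying on the fault line of $(j,a_j)$, i.e.\ $a_i + a_j + (i-j) = k$, and use $1 \leq a_i, a_j \leq k/2$ to force $i - j = k - a_i - a_j \geq 0$; since $i \neq j$ this gives $i > j$, hence $\abs{i-j} = i-j$ and the conflict equation holds. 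These two short rearrangements give the pairwise equivalence, and combining it with the reduction in the first paragraph yields Proposition~\ref{FaultLine}.

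The one genuinely delicate point --- and the step I expect to be the main obstacle --- is the absolute value in the conflict condition. A priori the points conflicting with a fixed $(c,d)$ split between the two diagonal lines $y=-x+(k+c-d)$ and $y=x+(k-c-d)$, and only the former is declared the fault line; so I must make sure that restricting attention to fault lines loses no conflict and creates no spurious one. What rescues this is exactly the bound $a_i \leq k/2$: it forces every conflicting point on the second diagonal to have strictly smaller first coordinate, so that such a conflict is instead recorded as a fault-line incidence of the other point. Ensuring each conflict is charged to exactly one fault line (that of the lower-indexed point) is the crux, and it is why the argument is organized around the normalization $i>j$. I would also flag, for completeness, that the restriction to distinct indices cannot be dropped, since a point with $a_i = k/2$ lies on its own fault line.
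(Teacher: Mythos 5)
Your proposal is correct and follows essentially the same route as the paper: the paper also derives the proposition directly from the conflict/blame discussion, using the bound $a_i\leq k/2$ to show that any conflicting point on the second diagonal $y=x+(k-c-d)$ has first coordinate at most $c$ and is therefore recorded as a fault-line incidence of the other point. Your explicit two-way rearrangement (normalizing to $i>j$ and checking the sign of $i-j$ in the reverse direction) and your remark about points with $a_i=k/2$ lying on their own fault line match the paper's reasoning, including its later caveat about the point $(x,9)$ in the $k=18$ case.
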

It is useful to note that all the points on the line $y = x + b$ have the same fault line $y = -x + b + k$.

\begin{theorem} \label{thm:20-avoiding}
There is a $20$-avoiding odd-even sequence.
\end{theorem}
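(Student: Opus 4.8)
The plan is to exhibit an explicit odd-even sequence and verify that it is $20$-avoiding, the key point being that the verification reduces to checking only finitely many \emph{local} conditions. First I would record the following locality observation: if $a_i + a_j + \abs{i-j} = 20$ for some $i \neq j$, then since $a_i, a_j \geq 1$ we must have $\abs{i-j} \leq 18$. Hence a sequence $\seq{a}$ with $a_i \leq 10$ is $20$-avoiding if and only if no conflict occurs between two indices at distance at most $18$. In particular it suffices to produce a \emph{periodic} odd-even sequence with period $P \geq 20$, since then every pair of indices at distance $\leq 18$ is, after translation, a pair inside a single window of $P$ consecutive terms, so $20$-avoidance becomes a finite check over one period. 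By Proposition~\ref{FaultLine} this is the same as placing the points $(i,a_i)$ so that none lies on the fault line of another; because fault lines are anti-diagonals and all points of a common line of slope $+1$ share a single fault line, this geometric picture is what keeps an explicit construction manageable.

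Next I would design the periodic block from the fault-line picture. Writing $R_i = a_i + i$ and $L_j = a_j - j$, a conflict blamed on $j < i$ is exactly $R_i + L_j = 20$, and both quantities are even since $a_i \equiv i \pmod 2$. So the task is to choose values in $\{1,\dots,10\}$ of the correct parities so that the right-hand sums $\{R_i\}$ avoid the shifted left-hand sums $\{20 - L_j\}$ across every admissible pair $i > j$. Guided by the geometry I would assemble the block from short monotone (slope $\pm 1$) runs, which automatically respect the odd-even property and, if kept below the value $10$, carry no internal conflicts (an internal conflict on a monotone run would force an endpoint to have value $10$). The turning points of the runs must then be placed so that the fault anti-diagonal of each rising run meets no occupied point to its right within distance $18$. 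Once a candidate block $b_0,\dots,b_{P-1}$ is fixed, the finite check from the first step is simply: for each $s$ and each $d \in \{1,\dots,18\}$ confirm $b_s + b_{(s+d)\bmod P} + d \neq 20$.

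The step I expect to be the main obstacle is the design of the block itself, because the condition is surprisingly rigid. As the distance $\delta$ ranges over $1,\dots,10$, the forbidden value-sums $\{\delta,\,20-\delta\}$ sweep out \emph{every} integer from $2$ to $19$ (only the sum $20$ is ever safe), so no constant or slowly varying pattern can succeed. Indeed the natural first guesses all fail: a sequence that is constant on each parity class conflicts at long range; a sawtooth rising to $10$ conflicts at its peak; the symmetric triangle wave conflicts between a trough and the following peak; and short periods such as $4$ and $6$ provably admit no valid assignment at all. The heart of the argument is therefore to find a single pattern that threads between all of these obstructions. What makes this tractable, and what I would lean on throughout, is that the fault-line reformulation together with the locality reduction turns both the search for the block and its final verification into a concrete finite computation.
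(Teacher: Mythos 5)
Your setup is sound and in fact mirrors the paper's: the locality bound $\abs{i-j}\leq 18$, the reduction to a periodic block with a finite check, and the fault-line reformulation via $R_i=a_i+i$ and $L_j=a_j-j$ are all correct, and your observation that a monotone run of slope $\pm 1$ is internally conflict-free unless an endpoint takes the value $10$ is also right. But the theorem is an existence statement, and your proposal stops exactly where the proof has to begin: you never exhibit the block. You correctly identify ``the design of the block itself'' as the main obstacle, explain why several natural candidates fail, and then leave the obstacle standing. A plan for a finite search, together with the correct certificate-checking procedure, is not a proof that the search succeeds; without a concrete $b_0,\dots,b_{P-1}$ that passes the check (or some nonconstructive argument that one exists, which you do not offer), nothing has been proved.

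For comparison, the paper's proof is precisely such a witness: the period-$24$ block
\[
1, 2, 1, 4, 3, 2, 7, 6, 5, 6, 7, 2, 3, 4, 1, 2, 1, 8, 9, 6, 5, 6, 9, 8,
\]
verified via the fault-line picture (Proposition~\ref{FaultLine}) to have no point of its graph on the fault line of another. Note that this block takes values only up to $9$, which quietly avoids the delicate case of a point with second coordinate $10=k/2$ lying on its own fault line --- a case your run-based heuristic correctly flags as dangerous. To complete your argument you would need to supply such a block explicitly and carry out the $P\times 18$ check you describe; everything before that point is scaffolding around a missing centre.
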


\begin{proof}
Let $\seq{a}$ be the periodic sequence of period $24$ consisting of repetitions of 
\[
	\dots, 1, 2, 1, 4, 3, 2, 7, 6, 5, 6, 7, 2, 3, 4, 1, 2, 1, 8, 9, 6, 5, 6, 9, 8, \dots.
\]
We claim $\seq{a}$ is a $20$-avoiding odd-even sequence. It is clearly an odd-even sequence, and $a_i \leq 10 = 20 / 2$ for all $i \in \Z$. 
We prove that it is $20$-avoiding by showing that in the graph of this sequence, no point lies on the fault line of another point. Then Proposition~\ref{FaultLine} implies the theorem.

Figure~\ref{fig:20avoiding} is a snapshot of two periods of the graph. The points on the graph are black circles, and the fault lines are drawn in red. Note that points on a line $\ell$ parallel to the line ``$x = y$'' have  the same fault line, and that this fault line crosses $\ell$ when the second coordinate is $10$.

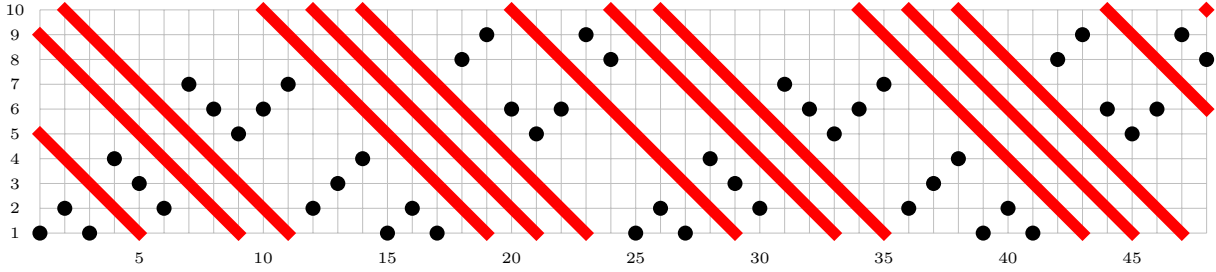
\begin{figure}[htb]
\begin{center}
\begin{tikzpicture}[scale=0.33]

\foreach \y in {1, ..., 10}
	\node at (0, \y) {\tiny \y};
\foreach \x in {5, 10,..., 45}
	\node at ({\x}, 0) {\tiny {\x}};

\draw[step=1, very thin, gray!50] (1,1) grid (48,10);

	  	


\draw[fault line] (1,5) -- (5,1);
\draw[fault line] (1,9) -- (9,1);
\draw[fault line] (2,10) -- (11,1);
\draw[fault line] (10,10) -- (19,1);
\draw[fault line] (12,10) -- (21,1);
\draw[fault line] (14,10) -- (23,1);
\draw[fault line] (20,10) -- (29,1);
\draw[fault line] (24,10) -- (33,1);
\draw[fault line] (26,10) -- (35,1);
\draw[fault line] (34,10) -- (43,1);
\draw[fault line] (36,10) -- (45,1);
\draw[fault line] (38,10) -- (47,1);
\draw[fault line] (44,10) -- (48,6);
\draw[fault line] (48,10) -- (47.99,10.01);


\node[oo] at (1,1) {};
\node[oo] at (2,2) {};
\node[oo] at (3,1) {};
\node[oo] at (4,4) {};
\node[oo] at (5,3) {};
\node[oo] at (6,2) {};
\node[oo] at (7,7) {};
\node[oo] at (8,6) {};
\node[oo] at (9,5) {};
\node[oo] at (10,6) {};
\node[oo] at (11,7) {};
\node[oo] at (12,2) {};
\node[oo] at (13,3) {};
\node[oo] at (14,4) {};
\node[oo] at (15,1) {};
\node[oo] at (16,2) {};
\node[oo] at (17,1) {};
\node[oo] at (18,8) {};
\node[oo] at (19,9) {};
\node[oo] at (20,6) {};
\node[oo] at (21,5) {};
\node[oo] at (22,6) {};
\node[oo] at (23,9) {};
\node[oo] at (24,8) {};
\node[oo] at (25,1) {};
\node[oo] at (26,2) {};
\node[oo] at (27,1) {};
\node[oo] at (28,4) {};
\node[oo] at (29,3) {};
\node[oo] at (30,2) {};
\node[oo] at (31,7) {};
\node[oo] at (32,6) {};
\node[oo] at (33,5) {};
\node[oo] at (34,6) {};
\node[oo] at (35,7) {};
\node[oo] at (36,2) {};
\node[oo] at (37,3) {};
\node[oo] at (38,4) {};
\node[oo] at (39,1) {};
\node[oo] at (40,2) {};
\node[oo] at (41,1) {};
\node[oo] at (42,8) {};
\node[oo] at (43,9) {};
\node[oo] at (44,6) {};
\node[oo] at (45,5) {};
\node[oo] at (46,6) {};
\node[oo] at (47,9) {};
\node[oo] at (48,8) {};
\end{tikzpicture}
\caption{A snapshot of the graph of a periodic $20$-avoiding odd-even sequence.} \label{fig:20avoiding}
\end{center}
\end{figure}

From the picture we see that no point of the sequence lies on a fault line of another point, implying that $\seq{a}$ is indeed $20$-avoiding.
\end{proof}


We are now ready to prove part (ii) of Theorem~\ref{TreeLengths} and 
Theorem~\ref{Counterexamples}.
\begin{proof}[Proof of part (ii) of Theorem~\ref{TreeLengths}]
Let $x_1,\dots,x_n$ be the first $n$ terms (starting at $1$) of the $20$-avoiding sequence produced by Theorem~\ref{thm:20-avoiding}. The tree $T_n=T(x_1\dots x_n)$ is a 1-3-tree 
for any sequence $(x_1, \ldots , x_n)$ by construction.  
Since $(x_1, \ldots , x_n)$ is an odd-even sequence, $T_n$ is also an 
even tree by part (i) of Lemma~\ref{PathLengths}.
The tree $T_n$
contains no leaf-leaf paths of length $20$, since $20 > 2\cdot \max x_i  =18$ and part (iv) of Lemma~\ref{PathLengths} tells us that a leaf-leaf path
of length $20$ exists only if there are distinct $i$ and $j$ such that $x_i+x_j +|i-j| =20$,
which is not case since $x_1, \ldots , x_n$ is $20$-avoiding.
\end{proof}
\begin{proof}[Proof of Theorem~\ref{Counterexamples}]
We let $G_n=G(T_n)$ be the graph constructed from the tree $T_n$ given by part (ii) 
of Theorem~\ref{TreeLengths}.
Since $T_n$ is a 1-3-tree, the graph $G_n$ is degree $3$-critical, as required. 
Since $T_n$ is an even 1-3-tree, we can use part (i) of Lemma~\ref{CycleLengths} and 
the fact that $T_n$ does not 
contain a leaf-leaf path of length $20$ to conclude that  
$G_n$ contains no cycle of length $23$.  
\end{proof}

\section{Possible leaf-leaf path lengths in even $1$-$3$ trees}\label{TreeLengthsSection}
In this section we prove part (i) of Theorem~\ref{TreeLengths}.  We first need a lemma about possible lengths of leaf-leaf paths in binary trees which have no short root-leaf paths.
\begin{lemma}\label{DeepTree}
Let $T$ be an even rooted binary tree and let $m$ be the length of its shortest root-leaf path.  
Then $T$ contains leaf-leaf paths of lengths $0,2,4,\dots,2m$.
\end{lemma}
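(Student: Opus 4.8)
The plan is to prove this by induction on the structure of the tree, tracking the shortest root-leaf path length $m$. The statement claims that if $m$ is the length of the shortest root-leaf path in an even rooted binary tree $T$, then $T$ realizes \emph{every} even leaf-leaf path length from $0$ up to $2m$. The base case $m=0$ is trivial: a shortest root-leaf path of length $0$ means the root itself is a leaf, and then the length-$0$ leaf-leaf path (a single leaf) is the only requirement.

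For the inductive step, I would consider the root $r$ of $T$. Since $T$ is an even rooted binary tree and $m \geq 1$, the root is not a leaf, so it has children; in a binary tree the relevant case is that $r$ has two subtrees $T_1$ and $T_2$ hanging off it (I would first reduce to or argue around the case of a single child). Let $m_1$ and $m_2$ be the shortest root-leaf path lengths \emph{within} $T_1$ and $T_2$ respectively, measured from their roots. Then the shortest root-leaf path of $T$ passes through one child, so $m = 1 + \min\{m_1, m_2\}$, say $m = 1 + m_1$ with $m_1 \leq m_2$. By the induction hypothesis applied to $T_1$, I get leaf-leaf paths of every even length $0, 2, \dots, 2m_1$ living entirely inside $T_1$, and similarly for $T_2$. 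The key new idea is to produce the \emph{longer} leaf-leaf paths — those of length up to $2m = 2m_1 + 2$ — by combining a deep path in $T_1$ with a deep path in $T_2$ through the root $r$.

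The heart of the argument is the combination step. A leaf-leaf path that passes through $r$ consists of a root-leaf path in $T_1$ of some length $\ell_1$ and a root-leaf path in $T_2$ of some length $\ell_2$, joined at $r$, giving total length $\ell_1 + \ell_2 + 2$ (the $+2$ accounts for the two edges from $r$ down into each subtree; I would double-check this offset against the convention that path length counts edges and that the subtree roots are the children of $r$). Since root-leaf path lengths available in $T_1$ range over some set including values up to at least $m_1$ and likewise $T_2$ provides values up to at least $m_2 \geq m_1$, I can choose $\ell_1$ and $\ell_2$ to hit the desired totals. In particular, to realize length $2m = 2m_1 + 2$, I take $\ell_1 = \ell_2 = m_1$ (the shortest root-leaf path in $T_1$ together with a matching-length root-leaf path in $T_2$, which exists because $m_2 \geq m_1$), while shorter even lengths down to and overlapping with the induction-hypothesis range are obtained by decreasing $\ell_1 + \ell_2$ appropriately. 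A parity check is essential here: because $T$ is even, all leaf-leaf paths have even length and all root-leaf paths within an even subtree rooted at a child of $r$ have lengths of a fixed parity, so the sums $\ell_1 + \ell_2 + 2$ land exactly on the even integers and no parity gaps appear.

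The main obstacle I anticipate is ensuring \emph{continuity} of the achievable lengths — that combining the two ranges (leaf-leaf paths inside $T_1$, inside $T_2$, and through-root paths) leaves no even length uncovered in $\{0, 2, \dots, 2m\}$. The induction hypothesis guarantees $0, 2, \dots, 2m_1$ inside $T_1$, so the only lengths I must newly produce are the even values in $(2m_1, 2m] = \{2m_1 + 2\}$, which is a single value once I confirm $m = m_1 + 1$; this makes the gap-filling clean, but I would need to handle carefully the possibility that the root has only one child or that one subtree is a single leaf, and verify the precise length offset. A secondary subtlety is confirming that a root-leaf path of the shortest length $m_1$ genuinely gives a valid leaf-leaf path when extended through $r$ into $T_2$ — i.e., that its far endpoint in $T_1$ is a leaf of the whole tree $T$, which it is since leaves of subtrees are leaves of $T$. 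Once the offset and the single-child edge cases are pinned down, the induction closes.
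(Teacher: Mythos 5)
Your overall strategy---induct, split at the root into subtrees $T_1,T_2$ with shortest root-leaf lengths $m_1\le m_2$, obtain lengths $0,2,\dots,2m_1$ from the induction hypothesis, and manufacture the one missing length $2m=2m_1+2$ by gluing two root-leaf paths through $r$---is the paper's strategy. But the step that is supposed to produce the length $2m$ fails. You take $\ell_1=\ell_2=m_1$ and assert that $T_2$ contains a root-leaf path of length $m_1$ ``because $m_2\ge m_1$.'' That inequality gives you the opposite of what you need: $m_2$ is the \emph{minimum} root-leaf length in $T_2$, so if $m_2>m_1$ then every root-leaf path of $T_2$ has length at least $m_2$ (in fact at least $m_1+2$, since evenness of $T$ forces $m_1\equiv m_2 \pmod 2$), and no through-root leaf-leaf path has length $2m_1+2$ at all---the shortest such path has length $m_1+m_2+2\ge 2m_1+4$. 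Concretely, if $T_1$ and $T_2$ are perfect binary trees of depths $3$ and $5$, then $T$ is even, $m=4$, and the unique through-root length is $10$, so your construction cannot produce the required length $8$.

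The lemma survives because in exactly this unbalanced case the induction hypothesis applied to $T_2$ alone already yields leaf-leaf paths of all lengths $0,2,\dots,2m_2$, a range containing $2m$---an ingredient you mention (``and similarly for $T_2$'') but never deploy. This is precisely the case distinction the paper makes: if one subtree has shortest root-leaf length at least $m$, recurse into that subtree and you are done; otherwise both subtrees have shortest root-leaf length exactly $m-1$, and only then do the two shortest root-leaf paths glue through $r$ into a leaf-leaf path of length $(m-1)+1+1+(m-1)=2m$. With that dichotomy inserted your argument closes; the remaining caveats you flag (the $+2$ offset, one-child vertices) are genuine but minor, and are resolved as in the paper by taking every internal vertex of the binary tree to have two children.
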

\begin{proof}
The proof is by induction on $|V(T)|$.  The statement is certainly true for $|V(T)|=1$.  
Let now $|V(T)| >1$ and let $x$ and $y$ be the children of of the root $r$. 

Suppose first that in one of the subtrees $T_x$ and $T_y \subseteq T$, rooted at $x$ and $y$, respectively, the
shortest root-leaf path is of length $m$ as well. 
In this case we can apply induction to this subtree and find in it a leaf-leaf paths of all
length $0,2,\ldots , 2m$.  The leaf-leaf path of the subtree are of course leaf-leaf paths of $T$, so we are done in this case.  

Otherwise, the length of the shortest root-leaf path of both subtrees $T_x$ and $T_y$ are $m-1$ 
(the subtrees cannot contain a shorter root-leaf path, because $T$ itself does not contain a root-leaf path shorter than $m$). 
Then by induction there are leaf-leaf path of all length $0,2, \ldots , 2m-2$ in both of these subtrees and hence also in $T$.
To construct a leaf-leaf path of length $2m$ in $T$
let $P_x$ be a path between $x$ and a leaf of $T$ of length $m-1$, and $P_y$ be a path between $y$ and a leaf of $T$ of length 
$m-1$. Then the path $P_x+r+P_y$ formed by joining $P_x$ and $P_y$ to $r$ using the edges $rx$ and $ry$ is a 
leaf-leaf path in $T$ of length $2m$.
\end{proof}

The following proposition shows that finding which leaf-leaf paths lengths always occur in sufficiently large trees is equivalent to finding the $k$ for which $k$-avoiding sequences exist.
\begin{proposition}\label{TreeSequenceEquivalence}
Let $m$ be a positive integer.  The following are equivalent.
\begin{enumerate}[(i)]
\item There is an integer $N_0(m)$ 
such that every even 1-3-tree of order at least $N_0(m)$ contains a leaf-leaf path of length $2m$.
\item There exists no $2m$-avoiding odd-even sequence $(x_n)_{n \in \Z}$. 
\end{enumerate}
\end{proposition}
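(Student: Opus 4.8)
The plan is to prove the equivalence by establishing the contrapositive of each direction, since both statements are most naturally negated. The core intuition is that the tree construction $T(x_1\dots x_n)$ from Lemma~\ref{PathLengths} translates exactly between odd-even sequences and even $1$-$3$ trees: a leaf-leaf path of length $2m$ corresponds (for $m$ large) to a pair of indices $i \neq j$ with $x_i + x_j + |i-j| = 2m$, which is precisely the condition a $2m$-avoiding sequence forbids.

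For the direction $(ii) \Rightarrow (i)$, I would argue the contrapositive: assuming $(i)$ fails, I produce a $2m$-avoiding odd-even sequence. If $(i)$ fails, then for every $N$ there is an even $1$-$3$-tree $T_N$ of order at least $N$ with no leaf-leaf path of length $2m$. The main obstacle here is that an arbitrary such tree need not be of the form $T(x_1\dots x_n)$, and its structure could be complicated. The key observation to extract is that an even $1$-$3$-tree avoiding a leaf-leaf path of length $2m$ must avoid all leaf-leaf paths of length $\geq 2m$ in a controlled way; in particular, by Lemma~\ref{DeepTree} applied along the tree, every root-leaf path in any rooted subtree must be short (shorter than $m$), since a long root-leaf path would force, together with a sibling branch, leaf-leaf paths of every even length up to twice that depth, hitting $2m$. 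This forces the tree to look like a ``caterpillar'' of shallow binary bushes hung along a central path — exactly the shape of $T(x_1\dots x_n)$. Reading off the depths of these bushes along the spine yields a finite odd-even sequence with $x_i \le m = (2m)/2$ and no pair satisfying $x_i + x_j + |i-j| = 2m$; as $N \to \infty$ these finite sequences grow without bound, and a standard compactness/diagonalization argument (König-type, since the alphabet $\{1,\dots,m\}$ is finite) extracts a two-sided infinite $2m$-avoiding odd-even sequence.

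For the direction $(i) \Rightarrow (ii)$, again by contraposition: suppose a $2m$-avoiding odd-even sequence $\seq{x}$ exists, and derive that $(i)$ fails. For each $n$, take the finite subword $x_1,\dots,x_n$ (shifted to start the odd-even pattern correctly) and form $T_n = T(x_1\dots x_n)$, which is an even $1$-$3$-tree by Lemma~\ref{PathLengths}(i). Since $|x_i| \le m$, we have $\max_i x_i \le m$, so $2m \ge 2\max_i x_i$; then by parts (iii) and (iv) of Lemma~\ref{PathLengths}, a leaf-leaf path of length $2m$ would require either $\max\{x_1,x_n\} = \max_i x_i$ with $2m = 2\max_i x_i$, or distinct $i,j$ with $x_i + x_j + |i-j| = 2m$. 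The latter is ruled out by the avoidance hypothesis, and one must check that $n$ can be chosen so the boundary case does not accidentally realize $2m$ either — this is handled by picking the window so that $\max_i x_i < m$ strictly, or by noting the avoidance condition already controls the endpoint contributions. Since $|T_n| \to \infty$, statement $(i)$ fails.

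I expect the main obstacle to be the $(ii) \Rightarrow (i)$ direction, specifically the structural step showing that an \emph{arbitrary} large even $1$-$3$-tree with no length-$2m$ leaf-leaf path must decompose into the caterpillar form governed by a bounded odd-even sequence. Lemma~\ref{DeepTree} is the crucial tool: it guarantees that any deeply rooted binary subtree forces a long interval of even leaf-leaf path lengths, so avoidance of $2m$ bounds the depths of all hanging bushes by $m-1$ and constrains how their spines can interact. Making the passage from ``bounded bush depths'' to ``a genuine $2m$-avoiding two-sided infinite sequence'' rigorous — including the compactness argument that globalizes the finite obstructions — is where the care is needed; the $(i)\Rightarrow(ii)$ direction is essentially a direct application of Lemma~\ref{PathLengths}.
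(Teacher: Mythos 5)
Your $(i)\Rightarrow(ii)$ direction is essentially the paper's argument (the paper runs it directly rather than contrapositively): build $T(x_1\dots x_n)$ from a window of the sequence and invoke Lemma~\ref{PathLengths}. One small correction there: you cannot in general ``pick the window so that $\max_i x_i<m$ strictly,'' since a $2m$-avoiding sequence may contain entries equal to $m$; the right fix, and the one the paper uses, is to choose the two \emph{endpoints} of the window to have values strictly below the maximum (possible because an odd-even sequence cannot be constant), so that the clause $\max\{x_1,x_n\}=\max_i x_i$ in Lemma~\ref{PathLengths}(iii) is never triggered. Your remark that ``the avoidance condition already controls the endpoint contributions'' is not correct as stated: the single-endpoint paths of length $2x_1$ and $2x_n$ are not governed by the pair condition $x_i+x_j+|i-j|=2m$.

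The genuine gap is in $(ii)\Rightarrow(i)$, in the structural step. You claim that Lemma~\ref{DeepTree} forces ``every root-leaf path in any rooted subtree'' of a $2m$-path-avoiding even $1$-$3$ tree to be shorter than $m$, hence that the tree is a caterpillar of shallow bushes, i.e.\ of the form $T(x_1\dots x_n)$. This is false. Lemma~\ref{DeepTree} is about the \emph{shortest} root-leaf path: a rooted branch with one shallow leaf and one very deep leaf has small $m'$ in that lemma and is only guaranteed to contain short leaf-leaf paths, so avoidance of length $2m$ bounds only the distance from each spine vertex to its \emph{nearest} leaf, not the depth of the branch. (Concretely, a branch consisting of a long path with a single leaf pendant at each internal vertex is an arbitrarily deep $1$-$3$ branch whose shortest root-leaf path has length $1$.) So the tree need not equal $T(x_1\dots x_n)$, Lemma~\ref{PathLengths} does not apply to it, and ``reading off the depths of the bushes'' is ambiguous precisely where it matters: with maximum depth the extracted values need not be bounded by $m$, while the quantity that actually works is the minimum leaf-distance. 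The paper's proof takes exactly this route: it finds a long path $v_1,\dots,v_{N_1}$ in an arbitrary large even $1$-$3$ tree, sets $x_i$ to be the distance from $v_i$ to the nearest leaf of its hanging branch $T_i$, uses Lemma~\ref{DeepTree} only to rule out $x_i>m$, and realizes a pair with $x_i+x_j+|i-j|=2m$ as an explicit leaf-leaf path by concatenating shortest leaf-paths. It also sidesteps your compactness/K\"onig extraction entirely: a pigeonhole argument on the bounded sequence $(x_i)$ produces a repeated block of length $2m$, whose periodic extension is an honest two-sided odd-even sequence to which hypothesis $(ii)$ can be applied directly, yielding an explicit $N_0(m)$. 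Your compactness scheme could be made to work once the sequence is defined correctly, but as written the decomposition it rests on does not hold.
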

\begin{proof}
Let us assume first that (i) holds with integer $N_0(m)=N_0$.  
Let  $(x_n)_{n=1} ^{\infty}$ be an arbitrary odd-even sequence such that $\max_{i=1} ^{n} x_i\leq m$.  
Notice that since 
$x_i$ is even if and only if $i$ is even, there are infinitely many indices $a$ for which $x_a<\max_{i=1} ^{n} x_i$.  
Therefore we can choose two indices $a$ and $b$ such that $a-b\geq N_0$ and $x_a$, $x_b < m$.  
Then by parts (iii) and (iv) of Lemma~\ref{PathLengths}, the tree $T(x_a\dots x_b)$ has a leaf-leaf path of length 
$2m$ if and only if there are two distinct indices $i$ and $j$ such that $x_i+x_j+|i-j|=2m$ holds.  
On the other hand notice that 
by part (i) of Lemma~\ref{PathLengths}, $T(x_a\dots x_b)$ is an even 1-3 tree and hence, since its order is at least $N_0$, 
does have a leaf-leaf path of length $2m$. That is, there do exist indices $i\neq j$ 
such that $x_i+x_j+|i-j|=2m$ holds, implying that $(x_n)_{n=1} ^{\infty}$ is not $2m$-avoiding.

Now assume that (ii) holds. Let us define $N_0(m)=N_0=\frac{3}{2}\cdot2^{N_1/2} -1$, where $N_1=m^{2m}+2m$.  
Let $T$ be an arbitrary even $1$-$3$ tree of order at least $N_0$. We will show that
$T$ contains a leaf-leaf path of length $2m$. 
Since $T$ is a tree of maximum degree at most $3$ on $N_0$ vertices it 
must contain a path $v_1, v_2, \dots, v_{N_1}$ with $N_1$ vertices. 
Let $T_i$ be the subtree of $T$ consisting of the connected component of 
$T-v_{i+1}-v_{i-1}$ 
containing $v_i$ and let $x_i$ be the length of the shortest path from $v_i$ 
to a leaf of $T_i$. Note that $(x_i)_{i=1}^{N_1}$ is an odd-even sequence, because $T$ is an
even tree.

Suppose first that we have $m<\max_{i=1} ^{N_1} x_i$.  Choose an index $i$ such that $x_i>m$ holds and let $T'=T_i-v_i$.  
Then $T'$ is a binary tree rooted at the neighbour of $v_i$, with no root-leaf paths shorter than $m$, so
Lemma~\ref{DeepTree} gives us a leaf-leaf path of length $2m$. 

Suppose now that we have $m\geq\max_{i=1} ^{N_1} x_i$.  
Since $N_1> m^{2m} + 2m - 1$, the Pigeonhole Principle implies that there must be 
indices $a< b$ such that 
$x_a=x_b, x_{a+1}=x_{b+1}, \dots, x_{a+2m - 1}=x_{b+2m - 1}$ all hold.  
Consider now the infinite periodic sequence 
$$\dots, x_a, x_{a+1}, \dots, x_{b-1}, x_a, x_{a+1}, \dots, x_{b-1},  x_a,  \dots,$$
denoted by $(y_i)_{i\in \Z}$.
This is an odd-even sequence as the sequence $(x_i)_{i=1}^{N_1}$ was odd-even.
By our assumption $(y_i)_{i\in \Z}$ is not $2m$-avoiding. But 
$m\geq\max_{i=1} ^{n} x_i=\max_{i=1} ^{n} y_i$, so there must be indices 
$i\neq j$ such that $y_i+y_j+|i-j|=2m$.
Since the sequence is positive we must have $|i-j|< 2m$ and by periodicity
we can assume that $a\leq i < j \leq b+2m-1$. The way we chose $a$ and $b$ ensures
that $x_i=y_i$ for every $i$ between $a$ and $b+2m-1$, so we also have 
$x_i+x_j+|i-j|=2m$.
We can now find a leaf-leaf path in $T$ of length $2m=x_i+x_j+|i-j|$ by 
concatenating a shortest path from $v_i$ to a leaf of $T_i$,
the path between $v_i$ and $v_j$ and a 
shortest  path from $v_j$ to a leaf of $T_j$.
\end{proof}

We now proceed to prove part (i) of Theorem~\ref{TreeLengths}.  We do this by showing that part (ii) of Proposition~\ref{TreeSequenceEquivalence} holds for $m\leq 9$.

\begin{theorem} \label{thm:18-avoiding}
There is no $18$-avoiding odd-even sequence.
\end{theorem}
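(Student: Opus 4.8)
The plan is to argue by contradiction and to first recast the avoidance condition into a cleaner combinatorial form. Suppose $\seq{a}$ were an $18$-avoiding odd-even sequence and set $b_i = 9 - a_i$, so that $b_i \in \lset{0, 1, \dots, 8}$ and $b_i \equiv i + 1 \pmod 2$. A direct substitution turns the forbidden equation $a_i + a_j + \abs{i - j} = 18$ into $\abs{i - j} = b_i + b_j$; thus $\seq{a}$ is $18$-avoiding precisely when there is no pair $i < j$ with $j - i = b_i + b_j$. Writing $f_i = i + b_i$ and $g_i = i - b_i$ (these are all \emph{odd} integers, by the parity of $b_i$), this says that no right endpoint $f_i$ may equal a later left endpoint $g_j$; equivalently, viewing $[\,i - b_i,\, i + b_i\,]$ as an interval centred at $i$, no two of these intervals may be externally tangent. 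This is exactly the fault-line condition of Proposition~\ref{FaultLine} rewritten in endpoint coordinates, and it is the form I would work with.

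Next I would record the local and counting features that any putative counterexample must satisfy. Since $b_i \le 8$, a conflict $j - i = b_i + b_j$ forces $\abs{i - j} \le 16$, so being $18$-avoiding is a \emph{local} property: it holds if and only if every block of $17$ consecutive indices is internally conflict-free. Moreover, because every $f_i$ and every $g_i$ is odd, the set of odd values occurring as a right endpoint reached from the left and the set reached from the right must be disjoint, and any index with $b_i = 0$ must be the unique index targeting its own position. A short count using that all these targets lie among the odd integers then shows, for instance, that at least about half of the indices must have $b_i > 0$. These conditions are necessary but, as the $20$-avoiding construction of Theorem~\ref{thm:20-avoiding} shows (where the only change is that $b_i$ is allowed to reach $9$), they are not by themselves contradictory, so the bound $b_i \le 8$ must be used in an essential, structural way.

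To exploit locality I would pass to a finite problem. The alphabet is finite and, by the previous paragraph, legality is determined inside windows of bounded length, so a standard compactness (de Bruijn graph) argument applies: if any $18$-avoiding sequence exists then a \emph{periodic} one exists. Concretely, form the finite directed graph whose vertices are the internally conflict-free windows of $16$ consecutive $b$-values and whose edges are the one-step extensions creating no new conflict against the preceding window; a bi-infinite avoiding sequence is exactly a bi-infinite walk, hence forces a cycle, and conversely acyclicity of this graph is equivalent to the nonexistence of any $18$-avoiding sequence. It therefore suffices to prove that this finite graph has no cycle. I would do this by a case analysis organised around the extreme values $b_i = 8$ (that is, $a_i = 1$), which generate the longest forbidden fault lines: tracking how the forbidden anti-diagonals $A_i + 2b_i$ (with $A_i = a_i + i$) propagate to the right, and using the parity pattern, one shows the forbidden diagonals cannot all be avoided and some later occupied point is forced onto an earlier fault line.

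The main obstacle is precisely this last case analysis. The parity and counting constraints above are genuinely satisfiable one notch higher (at $k = 20$), so no purely numerical argument can close the gap; the proof has to track the exact geometry of the fault lines and use $b_i \le 8$ sharply. I would keep the casework manageable by using the reflection symmetry $i \mapsto -i$, which preserves the odd-even property and the conflict relation and so halves the configurations to be checked, and by propagating the forbidden diagonals monotonically along a window until a collision is forced within a bounded range.
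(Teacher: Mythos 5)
Your reformulation is correct and even illuminating: with $b_i=9-a_i$ the condition $a_i+a_j+\abs{i-j}=18$ becomes $\abs{i-j}=b_i+b_j$, the endpoints $i\pm b_i$ are all odd, and avoidance is the statement that no interval $[i-b_i,i+b_i]$ is externally tangent to another. Your locality observation (conflicts span at most $17$ consecutive indices) and the resulting compactness reduction to periodic sequences are also sound, modulo the fact that the paper does not need periodicity for this theorem. But none of this is the proof. The theorem is exactly the assertion that a certain finite but intricate case analysis closes with a contradiction, and your proposal stops at the point where that analysis would begin: you write that you would organise a case analysis around the indices with $b_i=8$, that "one shows the forbidden diagonals cannot all be avoided," and then you concede that "the main obstacle is precisely this last case analysis." An argument whose entire content is deferred to an unexecuted case analysis is not a proof, and nothing in the preliminary reductions makes that analysis routine: your own de Bruijn graph has on the order of $5^{16}$ vertices, so "check that it is acyclic" is not a step you can wave at without either a computation or a structural argument.

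For comparison, the paper's proof supplies precisely the missing structure. It generalises fault lines to \emph{excluded lines} (lines $y=-x+b$, $b$ even, containing no point of the graph except possibly one with second coordinate $9$) and proves a descending chain of impossibility lemmas: no four consecutive excluded lines, hence no three consecutive, hence no triple with offsets $\lset{0,2,6}$, hence none with offsets $\lset{0,4,6}$, hence no two consecutive. Each lemma is proved by forcing the values $a_t$ at a few specific indices and watching new fault lines appear until a forbidden configuration from an earlier lemma arises. The final step introduces \emph{orthogonal} excluded lines of slope $+1$ (obtained by applying the same lemmas to the reversed sequence $\rev{a}$), and plays the two families against each other: starting from any single fault line, alternating applications of the no-two-consecutive lemma in the two directions force lines at offsets $\pm4$ and $\pm8$ to be excluded and the orthogonal lines at $\pm2$ and $\pm6$ to be excluded, which eliminates every admissible value of $a_{b-9}$. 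Your instinct that the bound $b_i\le 8$ must be used "sharply" and that one must "track the exact geometry of the fault lines" is exactly right --- that is what these lemmas do --- but identifying what a proof must accomplish is not the same as accomplishing it. To repair the proposal you would need either to carry out a propagation argument of comparable depth in your interval language, or to genuinely execute (e.g.\ by computer) the finite search your de Bruijn reduction sets up.
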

\begin{proof}
Consider an odd-even sequence $\seq{a}$ with $a_i \leq 9$ for all $i \in \Z$. Assume that it is $18$-avoiding. As in the proof of Theorem~\ref{thm:20-avoiding}, we will consider the graph of $\seq{a}$ and consider fault lines. In this case, the fault line of a point $(c, d)$ is the line $y = -x + (18 + c - d)$. Since $\seq{a}$ is $18$-avoiding, Proposition~\ref{FaultLine} implies that no point of the graph lies on
the fault line of another point of the graph.
Notice however, that  a point of the form $(x, 9)$, which by
definition lies on its own fault line, is not itself a barrier to a sequence being $18$-avoiding.

We start with some lemmas about configurations of fault lines that lead to contradictions. We will actually deal with a slight generalization of fault lines, which we call \emph{excluded lines}. An excluded line is defined to be any line of  
the form $y=-x+b$  with $b$ even, 
that does not contain a point in the graph of $\seq{a}$, except possibly
the point with second coordinate $9$. Since $\seq{a}$ is an odd-even sequence, 
for any point $(i,a_i)$ in the graph of the sequence, the integer $18+i-a_i$ is even.
Hence every fault line of the sequence is also an excluded line.

In the following discussion lines of slope $-1$ whose $y$-intercepts differ by exactly $2$ are called \emph{consecutive}. We start with a trivial observation. 

\begin{lemma} \label{lem:4fault}
There cannot be four consecutive excluded lines for $\seq{a}$.
\end{lemma}

\begin{proof}
If there were four 
excluded lines $y = -x + b$, $y = -x + b + 2$, $y = -x + b + 4$, and $y = -x + b + 6$, where $b$ is even, then all of the points with even $y$-coordinate at most $8$ on the line $x = b - 2$ are on one of these lines. Hence $(b - 2, a_{b - 2})$ would be on an excluded line, 
a contradiction.

\begin{center}
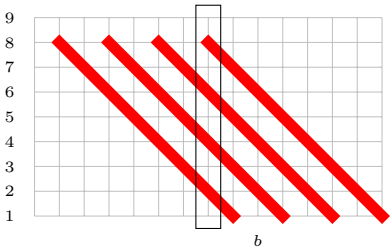

\begin{tikzpicture}[scale=0.33]

\draw[step=1, very thin, gray!50] (1,1) grid (15,9);

\foreach \y in {1, ..., 9}
	\node at (0, \y) {\tiny \y};
\node at (10, 0) {\tiny $b$};


\draw[fault line] (2,8) -- (9,1);
\draw[fault line] (4,8) -- (11,1);
\draw[fault line] (6,8) -- (13,1);
\draw[fault line] (8,8) -- (15,1);


\draw (7.5,0.5) rectangle (8.5,9.5);
\end{tikzpicture}
\captionof{figure}{Four consecutive excluded lines and the contradiction they give. 
}
\end{center}
\end{proof}

This easily leads to the next lemma.

\begin{lemma} \label{lem:3fault}
There cannot be three consecutive excluded lines for $\seq{a}$.
\end{lemma}

\begin{proof}
If there were three consecutive excluded lines $y = -x + b$, $y = -x + b + 2$, and $y = -x + b + 4$, where $b$ is even, then $a_{b - 4}$ must be equal to $2$ as all the other even values at most $8$ would put $(b - 4, a_{b - 4})$ on one of the three lines. Similarly, we must have $a_{b - 2} = 8$, and hence we have fault lines $y = -x + b + 8$ and $y = -x + b + 12$. This forces $a_{b - 1} = 7$, giving also the fault line $y = -x + b + 10$. Now $a_{b - 3}$ can only be $1$ or $9$ to avoid the original three fault lines, but it clearly cannot be $9$, since that would put $(b - 2, 8)$ on its fault line. But if $a_{b - 3} = 1$, then its fault line is $y = -x + b + 14$, and there would be $4$ consecutive fault lines $y = -x + b + \lset{8, 10, 12, 14}$, contradicting Lemma~\ref{lem:4fault}.

\begin{center}
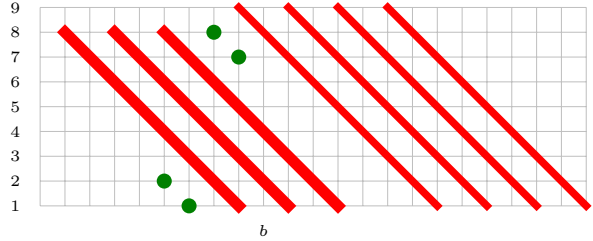

\begin{tikzpicture}[scale=0.33]

\draw[step=1, very thin, gray!50] (1,1) grid (23,9);

\foreach \y in {1, ..., 9}
	\node at (0, \y) {\tiny \y};
\node at (10, 0) {\tiny $b$};


\draw[fault line] (2,8) -- (9,1);
\draw[fault line] (4,8) -- (11,1);
\draw[fault line] (6,8) -- (13,1);
\draw[fault line thin] (9,9) -- (17,1);
\draw[fault line thin] (11,9) -- (19,1);
\draw[fault line thin] (13,9) -- (21,1);
\draw[fault line thin] (15,9) -- (23,1);


\node[ooo] at (6,2) {};
\node[ooo] at (7,1) {};
\node[ooo] at (8,8) {};
\node[ooo] at (9,7) {};
\end{tikzpicture}
\captionof{figure}{Three consecutive excluded lines and the contradiction they give.}
\end{center}
\end{proof}

A few more lemmas of this sort will be useful for the proof.

\begin{lemma} \label{lem:21fault}
There cannot be three excluded lines of the form $y = -x + b$, $y = -x + b + 2$, and $y = -x + b + 6$ (with $b$ even).
\end{lemma}

\begin{proof}
If this were the case, then this would force $a_{b - 2} = 6$, which results in the fault line $y = -x + b + 10$. If $a_{b - 1} = 9$, then there would be
three consecutive excluded lines $y = -x + b + \lset{6, 8, 10}$, which would contradict Lemma~\ref{lem:3fault}. This forces $a_{b - 1} = 5$, which results in the fault line $y = -x + b + 12$. Similarly, in order to avoid a third consecutive fault line $y = -x + b + 14$, we must have $a_{b + 2} = 2$ and $a_{b + 5} = 3$, resulting in the fault lines $y = -x + b + 18$ and $y = -x + b + 20$, respectively. This leaves us with no valid choices for $a_{b + 6}$, since a value of $2$ would create a third consecutive fault line $y = -x + b + 22$, a value of $8$ would create a third consecutive fault line $y = -x + b + 16$, and a value of $4$ or $6$ would put $(b + 6, a_{b + 6})$ on the fault line of a previous point. Therefore, this configuration cannot occur.

\begin{center}
\begin{tikzpicture}[scale=0.33]

\draw[step=1, very thin, gray!50] (1,1) grid (29,9);

\foreach \y in {1, ..., 9}
	\node at (0, \y) {\tiny \y};
\node at (10, 0) {\tiny $b$};


\draw[fault line] (2,8) -- (9,1);
\draw[fault line] (4,8) -- (11,1);
\draw[fault line] (8,8) -- (15,1);
\draw[fault line thin] (11,9) -- (19,1);
\draw[fault line thin] (13,9) -- (21,1);
\draw[fault line thin] (19,9) -- (27,1);
\draw[fault line thin] (21,9) -- (29,1);


\node[ooo] at (8,6) {};
\node[ooo] at (9,5) {};
\node[ooo] at (12,2) {};
\node[ooo] at (15,3) {};

\node[not] at (9,9) {};
\node[not] at (12,6) {};
\node[not] at (15,9) {};
\node[not] at (16,2) {};
\node[not] at (16,8) {};

\draw (15.5,0.5) rectangle (16.5,9.5);
\end{tikzpicture}
\captionof{figure}{A configuration of three excluded lines and the contradiction they give.}
\end{center}
\end{proof}

\begin{lemma} \label{lem:12fault}
There cannot be three excluded lines of the form $y = -x + b$, $y = -x + b + 4$, and $y = -x + b + 6$ (with $b$ even).
\end{lemma}

\begin{proof}
If this were the case, then this would force $a_{b - 2} = 4$, which results in the fault line $y = -x + b + 12$. If $a_{b - 1} = 9$, there would be three
 consecutive excluded lines $y=-x +b +\{ 4,6,8\}$, contradicting Lemma~\ref{lem:3fault}. So we must have $a_{b - 1} = 3$, resulting in the fault line $y = -x + b + 14$ (the other values of $a_{b - 1}$ would put $(b - 1, a_{b - 1})$ on an excluded line). If $a_b = 8$, then we would have the configuration of fault lines $y = -x + b + \lset{4, 6, 10}$ forbidden by Lemma~\ref{lem:21fault}, so we must have $a_b = 2$, resulting in the fault line $y = -x + b + 16$. But then we have the three consecutive fault lines $y = -x + b + \lset{12, 14, 16}$, also a contradiction.

\begin{center}
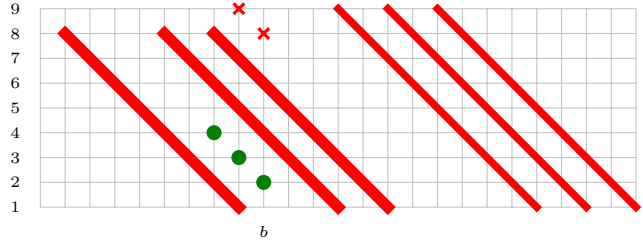

\begin{tikzpicture}[scale=0.33]

\draw[step=1, very thin, gray!50] (1,1) grid (25,9);

\foreach \y in {1, ..., 9}
	\node at (0, \y) {\tiny \y};
\node at (10, 0) {\tiny $b$};


\draw[fault line] (2,8) -- (9,1);
\draw[fault line] (6,8) -- (13,1);
\draw[fault line] (8,8) -- (15,1);
\draw[fault line thin] (13,9) -- (21,1);
\draw[fault line thin] (15,9) -- (23,1);
\draw[fault line thin] (17,9) -- (25,1);


\node[ooo] at (8,4) {};
\node[ooo] at (9,3) {};
\node[ooo] at (10,2) {};

\node[not] at (9,9) {};
\node[not] at (10,8) {};
\end{tikzpicture}
\captionof{figure}{A configuration of three excluded lines and the contradiction they give.}
\end{center}
\end{proof}

All previous lemmas pave way for our final technical lemma:

\begin{lemma} \label{lem:2fault}
There cannot be $2$ consecutive  excluded lines for $\seq{a}$.
\end{lemma}

\begin{proof}
Suppose there were two consecutive excluded lines $y = -x + b$ and $y = -x + b + 2$ for some even $b$. Consider the possible values for $a_{b - 8}$. It cannot be $8$, since this is on the excluded line $y = -x + b$. It cannot be $6$, as this this would create a third consecutive 
excluded line $y = -x + b + 4$. It also cannot be $4$, because this would create the fault line $y = -x + b + 6$, contradicting Lemma~\ref{lem:21fault}. Thus, we must have $a_{b - 8} = 2$, which means we have the fault line $y = -x + b + 8$. We also must have $a_{b - 4} = 2$, since values 
$4$ or $6$ would put a point of the graph on one of the excluded lines, and value $8$ would yield the fault line $y = -x + b + 6$, contradicting Lemma~\ref{lem:21fault}. Thus, we also have the fault line $y = -x + b + 12$.

\begin{center}
\begin{tikzpicture}[scale=0.33]

\draw[step=1, very thin, gray!50] (1,1) grid (21,9);

\foreach \y in {1, ..., 9}
	\node at (0, \y) {\tiny \y};
\node at (10, 0) {\tiny $b$};


\draw[fault line] (2,8) -- (9,1);
\draw[fault line] (4,8) -- (11,1);
\draw[fault line thin] (9,9) -- (17,1);
\draw[fault line thin] (13,9) -- (21,1);


\node[ooo] at (2,2) {};
\node[ooo] at (6,2) {};

\node[not] at (2,4) {};
\node[not] at (2,6) {};
\node[not] at (6,8) {};
\end{tikzpicture}
\captionof{figure}{What two consecutive excluded lines can be reasoned to imply.}
\end{center}

Now consider $a_{b + 1}$. It cannot be $1$ or $7$, since these would put a point of the graph on an excluded line. It cannot be $9$, otherwise it would create the fault line $y = -x + b + 10$, and we would have three consecutive fault lines $y = -x + b + \lset{8, 10, 12}$ contradicting 
Lemma~\ref{lem:3fault}. It cannot be $5$, for if it were, there would be the three fault lines $y = -x + b + \lset{8, 12, 14}$ in contradiction with Lemma~\ref{lem:12fault}. Hence we have $a_{b + 1} = 3$ and the fault line $y = -x + b + 16$. Similarly, we must have $a_{b + 3} = 1$, since $5$ or $9$ put it on a fault line, $7$ would create three consecutive fault lines $y = -x + b + \lset{12, 14, 16}$, and $3$ would create the configuration of fault lines $y = -x + b + \lset{12, 16, 18}$ forbidden by Lemma~\ref{lem:12fault}. Therefore, there is also the fault line $y = -x + b + 20$. But now every possible value for $a_{b + 7}$ leads to a contradiction. If it is $1$, $5$, or $9$, then it is on a fault line. If it is $3$ or $7$, it creates a fault line resulting in a configuration forbidden by Lemmas~\ref{lem:12fault} and~\ref{lem:3fault}, respectively. 
Therefore, we cannot have two consecutive excluded lines.

\begin{center}
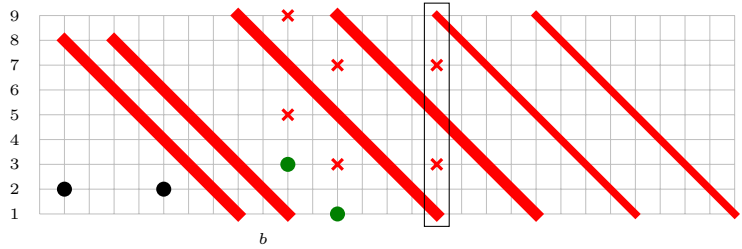

\begin{tikzpicture}[scale=0.33]

\draw[step=1, very thin, gray!50] (1,1) grid (29,9);

\foreach \y in {1, ..., 9}
	\node at (0, \y) {\tiny \y};
\node at (10, 0) {\tiny $b$};


\draw[fault line] (2,8) -- (9,1);
\draw[fault line] (4,8) -- (11,1);
\draw[fault line] (9,9) -- (17,1);
\draw[fault line] (13,9) -- (21,1);
\draw[fault line thin] (17,9) -- (25,1);
\draw[fault line thin] (21,9) -- (29,1);


\node[oo] at (2,2) {};
\node[oo] at (6,2) {};
\node[ooo] at (11,3) {};
\node[ooo] at (13,1) {};

\node[not] at (11,5) {};
\node[not] at (11,9) {};
\node[not] at (13,3) {};
\node[not] at (13,7) {};
\node[not] at (17,3) {};
\node[not] at (17,7) {};

\draw (16.5,0.5) rectangle (17.5,9.5);
\end{tikzpicture}
\captionof{figure}{The contradiction reached from two consecutive fault or excluded lines.}
\end{center}
\end{proof}

Excluded lines by definition have slope $-1$.
To finish the proof pf Theorem~\ref{thm:18-avoiding} we extend the notion of excluded line to those lines 
$y=x+b$ with even $b$, which do not contain any point of the graph except possibly the point 
$(9-b,9)$. We call these the {\em orthogonal excluded line}s of the sequence $\seq{a}$. 
The conclusion of Lemma~\ref{lem:2fault} also holds for orthogonal extended lines:
there cannot be two consecutive ones. Indeed, $y=x+b$ is an orthogonal extended line of 
the $18$-avoiding odd-even sequence $\seq{a}$ if and only if $y=-x -b$ is an extended line of 
 the $18$-avoiding odd-even sequence $\rev{a}$, so we can apply Lemma~\ref{lem:2fault} for 
 $\rev{a}$.
 
Another useful observation is that the line $y=-x+b$ is the fault line of  exactly those points that are on the line $y=x+18-b$. Hence 
if $y=-x+b$ contains a point of the graph (say, it is not excluded), then $y=x+ 18 -b$ must be an orthogonal excluded line. 
Using this observation  for $\rev{a}$ one can also obtain that if the orthogonal line 
$y=x+b$ contains a point of the graph of $\seq{a}$  
(say, it is not excluded), then $y=-x-18-b$ must be an excluded line for $\seq{a}$. 

Let us now assume that there exists an $18$-avoiding sequence $\seq{a}$ and 
let $y=-x+b$ be a fault line of it for some even $b$. By the above we can make a 
sequence of conclusions. The lines  $y=-x +b \pm 2$ are not excluded by 
Lemma~\ref{lem:2fault}. Then $y= x + 18 - b \pm 2$ must be orthogonal excluded lines.
Then $y = x + 18 - b \pm 4$ are not excluded  by the adaptation of 
Lemma~\ref{lem:2fault} for orthogonal lines. Then $y= - x + b \pm 4$ must  
be excluded lines. Again by Lemma~\ref{lem:2fault} the lines $y= -x +b \pm 6$ 
are not excluded and hence the orthogonal lines $y= x + 18 - b \pm 6$ 
must be excluded.  This implies that $y= x + 18 - b \pm 8$ are not orthogonal 
excluded lines by the adaptation of Lemma~\ref{lem:2fault} and 
$y= - x + b \pm 8$ are excluded lines.

What can now be the value of $a_{b-9}$? It must be odd as $b$ 
is even and $\seq{a}$ is an odd-even sequence. The line 
$y= x + 18 - b - 2$ being excluded shows it cannot be $7$, 
$y= - x + b - 4$ being excluded shows that it cannot $5$,
$y= x + 18 - b - 6$ being excluded shows it cannot be $3$,
$y= x + b - 8$ being  excluded shows it cannot be $1$.
The line $y=-x+b$ is a fault line of $\seq{a}$ so in principle $(a_{b-9}, 9)$ could be on it.
However then, the orthogonal line $x+18-b$ should also be excluded, 
meaning that together with $y= x + 18 - b \pm 2$ they would represent
three consecutive orthogonal excluded lines, a contradiction.

\end{proof}

To complete the proof of Theorem~\ref{TreeLengths} we need the following 
little proposition.

\begin{proposition}\label{SteppingUp}
Let $k$ be a positive even integer. If there is a $k$-avoiding odd-even sequence, then there is a $(k + 2\ell)$-avoiding odd-even sequence for every $\ell \in \Z_{\geq 0}$.
\end{proposition}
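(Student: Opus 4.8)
The plan is to build the required $(k+2\ell)$-avoiding odd-even sequence directly from the given one by a single rigid translation of its graph. Let $\seq{a}$ be a $k$-avoiding odd-even sequence, which exists by hypothesis. I would define a new two-sided sequence $\seq{b}$ by setting $b_i = a_{i-\ell} + \ell$ for every $i \in \Z$. Geometrically this simply shifts the graph $\{(i,a_i) : i \in \Z\}$ by the vector $(\ell,\ell)$, i.e.\ along a line of slope $1$, and the claim is that this shifted sequence is $(k+2\ell)$-avoiding and odd-even. It then suffices to verify the three defining conditions.

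The verification is routine. For the odd-even property, since $a_{i-\ell} \equiv i-\ell \pmod 2$ we get $b_i \equiv (i-\ell)+\ell \equiv i \pmod 2$. For positivity and the height bound, $b_i = a_{i-\ell}+\ell \geq 1$ because $a_{i-\ell}\geq 1$ and $\ell \geq 0$, while $b_i = a_{i-\ell}+\ell \leq k/2 + \ell = (k+2\ell)/2$. For the avoidance condition, take distinct $i,j$ and put $i'=i-\ell$, $j'=j-\ell$, which are still distinct with $\abs{i-j}=\abs{i'-j'}$; then $b_i + b_j + \abs{i-j} = (a_{i'}+a_{j'}+\abs{i'-j'}) + 2\ell \neq k + 2\ell$, where the inequality is exactly the $k$-avoidance of $\seq{a}$. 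Hence $\seq{b}$ is $(k+2\ell)$-avoiding.

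The only subtle point—and the reason the translation is by $(\ell,\ell)$ rather than by $(0,\ell)$—is the preservation of the odd-even parity: merely adding $\ell$ to each value would send $a_i$ to a term of parity $i+\ell$, which fails the odd-even requirement whenever $\ell$ is odd. Shifting the index by the same amount $\ell$ repairs the parity while leaving both the height bound and the avoidance condition untouched, the latter because the avoidance condition depends on the indices only through the differences $\abs{i-j}$, which a common index shift does not change. In the language of Proposition~\ref{FaultLine} this is transparent: translating the graph along a slope-$1$ line by $(\ell,\ell)$ carries fault lines to fault lines and raises the relevant parameter from $k$ to $k+2\ell$, so it creates no new incidence between a point of the graph and a fault line of another point. (Equivalently, one could first prove only the step $k \to k+2$ via $b_i = a_{i-1}+1$ and then iterate, but the one-shot translation above handles all $\ell \geq 0$ simultaneously.) I do not expect any genuine obstacle here; the entire content is choosing the translation so that parity is respected.
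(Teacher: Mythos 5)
Your proof is correct and is essentially the paper's own argument: the paper uses the diagonal translation $b_i = a_{i+\ell}+\ell$, and you use $b_i = a_{i-\ell}+\ell$, which is the same idea with the index shifted in the opposite direction (both choices preserve the odd--even parity since the index and value shifts have equal magnitude). The verifications of the parity, the height bound, and the avoidance condition match the paper's line for line.
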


\begin{proof}
If $\seq{a}$ is a $k$-avoiding odd-even sequence, then define the sequence $\seq{b}$ by
\[
	b_i = a_{i + \ell} + \ell
\]
for all $i \in \Z$. We claim that $\seq{b}$ is a $(k + 2 \ell)$-avoiding odd-even sequence.

It is clearly an odd-even sequence as $b_i = a_{i + \ell} + \ell \equiv i + 2 \ell \equiv i \pmod{2}$ for all $i \in \Z$. Also, $b_i = a_{i + \ell} + \ell \leq k / 2 + \ell = (k + 2 \ell) / 2$ for all $i \in \Z$. Suppose there were $i, j \in \Z$ with $i < j$ such that $b_i + b_j - i + j = k + 2 \ell$. Then we would have $a_{i + \ell} + \ell + a_{j + \ell} + \ell - i + j = k + 2 \ell$. But this implies $a_{i + \ell} + a_{j + \ell} - (i + \ell) + (j + \ell) = k$, which contradicts the fact that $\seq{a}$ is $k$-avoiding.
\end{proof}

\begin{proof}[Proof of Theorem~\ref{TreeLengths}~(i)]
Let $m \leq 9$ be a positive integer. We claim that there is no $2m$-avoiding 
odd-even sequence. Indeed, 
otherwise our previous proposition implied that there is also an $18$-avoiding odd-even sequence, which contradicts Theorem~\ref{thm:18-avoiding}.
Now by Proposition~\ref{TreeSequenceEquivalence}, there is an integer 
$N_0(m)$ such that every even $1$-$3$ tree of order at least $N_0(m)$ contains 
a leaf-leaf path of length $2m$, which is exactly the statement of 
part~(i) of Theorem~\ref{TreeLengths}.
\end{proof}

\section{Characterization of graphs with no subgraphs of minimum degree 3}\label{NonInducedSection}
Let ${\cal G}$ denote the family of graphs $G$ 
with $2|G|-2$ edges  and no proper (not necessarily induced) subgraphs with minimum degree $3$. In this section we characterize the members of
${\cal G}$ and deduce Theorem~\ref{NonInduced} as a corollary.

A wheel $W_n$ is an $n$-vertex graph with vertices $c$, and $w_1, \dots, w_{n-1}$ with edges $cw_i$ and $w_iw_{i+1\pmod{n-1}}$ for $i=1, \dots, n-1$.  The vertex $c$ will be called the \emph{centre} of the wheel and the vertices  $w_1, \dots, w_{n-1}$ will be called the \emph{outside vertices} of $W_n$.
For $n\geq 4$,  Let $H_n$ be the graph on $n$ vertices called $x$, $y$, and $v_1, \dots, v_{n-2}$ formed by the edges $v_iv_{i+1}$ for $i\in\{1, \dots, n-3\}$, $xv_{i}$ for $i\in\{1, \dots, n-2\}$, $yv_1$, and $y v_{n-2}$.  We call $x$ and $y$ the \emph{connectors} of $H_n$ and  $v_1, \dots, v_{n-2}$ the \emph{internal vertices} of $H_n$.   
Note that the roles of the connectors are not symmetric;
the letter $y$ will always denote one with degree two. 
See Figure~\ref{HnFigure} for a picture of the graph $H_7$.
\begin{figure}
  \centering
  	\begin{tikzpicture}[scale=0.8]
		\draw[edge] (-2,0) -- (0,2);
		\draw[edge] (-2,0) -- (0,-2);
		\draw[edge] (0,2) -- (0,1);
		\draw[edge] (0,2) -- (2,0);
		\draw[edge] (0,1) -- (0,0);
		\draw[edge] (0,1) -- (2,0);
		\draw[edge] (0,0) -- (0,-1);
		\draw[edge] (0,0) -- (2,0);
		\draw[edge] (0,-1) -- (0,-2);
		\draw[edge] (0,-1) -- (2,0);
		\draw[edge] (0,-2) -- (2,0);
  	
  		\node[vertex, label={left:$y$}] at (-2,0) {};
  		\node[vertex, label={left:$v_1$}] at (0,2) {};
  		\node[vertex, label={left:$v_2$}] at (0,1) {};
  		\node[vertex, label={left:$v_3$}] at (0,0) {};
  		\node[vertex, label={left:$v_4$}] at (0,-1) {};
  		\node[vertex, label={left:$v_5$}] at (0,-2) {};
  		\node[vertex, label={right:$x$}] at (2,0) {};
  	\end{tikzpicture}
  \caption{The graph $H_7$.} \label{HnFigure}
\end{figure}
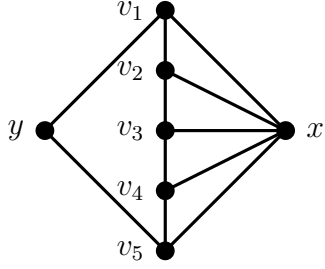  

The next theorem shows that the graphs in 
${\cal G}$ must have a very specific structure.  See Figure~\ref{NonInducedExamplesFigure} for examples of its members on $11$ vertices.
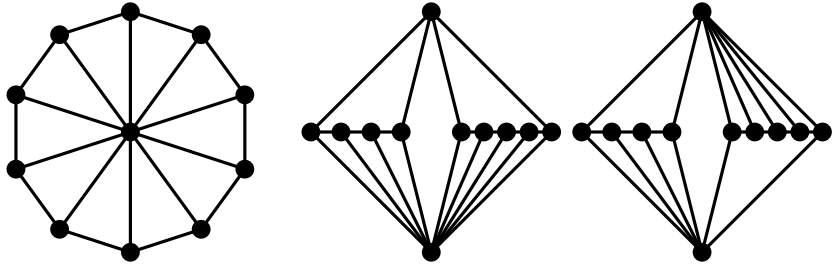
\begin{figure}
  \centering
  	\begin{tikzpicture}[scale=0.8]
  		\draw[edge] (0,2) -- (2*0.588,2*0.809) -- (2*0.951,2*0.309) -- (2*0.951,2*-0.309) -- (2*0.588,2*-0.809) -- (0,-2) -- (2*-0.588,2*-0.809) -- (2*-0.951,2*-0.309) -- (2*-0.951,2*0.309) -- (2*-0.588,2*0.809) -- cycle;
  		\draw[edge] (0,0) -- (0,2);
  		\draw[edge] (0,0) -- (2*0.588,2*0.809);
  		\draw[edge] (0,0) -- (2*0.951,2*0.309);
  		\draw[edge] (0,0) -- (2*0.951,2*-0.309);
  		\draw[edge] (0,0) -- (2*0.588,2*-0.809);
  		\draw[edge] (0,0) -- (0,-2);
  		\draw[edge] (0,0) -- (2*-0.588,2*-0.809);
  		\draw[edge] (0,0) -- (2*-0.951,2*-0.309);
  		\draw[edge] (0,0) -- (2*-0.951,2*0.309);
  		\draw[edge] (0,0) -- (2*-0.588,2*0.809);
  	
  		\node[vertex] at (0,0) {};
  		\node[vertex] at (0,2) {};
  		\node[vertex] at (2*0.588,2*0.809) {};
  		\node[vertex] at (2*0.951,2*0.309) {};
  		\node[vertex] at (2*0.951,2*-0.309) {};
  		\node[vertex] at (2*0.588,2*-0.809) {};
  		\node[vertex] at (0,-2) {};
  		\node[vertex] at (2*-0.588,2*-0.809) {};
  		\node[vertex] at (2*-0.951,2*-0.309) {};
  		\node[vertex] at (2*-0.951,2*0.309) {};
  		\node[vertex] at (2*-0.588,2*0.809) {};
  		
  		\draw[edge] (5+0,2) -- (5+-2,0);
  		\draw[edge] (5+0,2) -- (5+-0.5,0);
  		\draw[edge] (5+0,2) -- (5+2,0);
  		\draw[edge] (5+0,2) -- (5+0.5,0);
  		\draw[edge] (5+-2,0) -- (5+-0.5,0);
  		\draw[edge] (5+-2,0) -- (5+0,-2);
  		\draw[edge] (5+-1.5,0) -- (5+0,-2);
  		\draw[edge] (5+-1,0) -- (5+0,-2);
  		\draw[edge] (5+-0.5,0) -- (5+0,-2);
  		\draw[edge] (5+2,0) -- (5+0.5,0);
  		\draw[edge] (5+2,0) -- (5+0,-2);
  		\draw[edge] (5+1.625,0) -- (5+0,-2);
  		\draw[edge] (5+1.25,0) -- (5+0,-2);
  		\draw[edge] (5+0.875,0) -- (5+0,-2);
  		\draw[edge] (5+0.5,0) -- (5+0,-2);
  		
  		\node[vertex] at (5+0,2) {};
  		\node[vertex] at (5+-2,0) {};
  		\node[vertex] at (5+-1.5,0) {};
  		\node[vertex] at (5+-1.0,0) {};
  		\node[vertex] at (5+-0.5,0) {};
  		\node[vertex] at (5+2,0) {};
  		\node[vertex] at (5+1.625,0) {};
  		\node[vertex] at (5+1.25,0) {};
  		\node[vertex] at (5+0.875,0) {};
  		\node[vertex] at (5+0.5,0) {};
  		\node[vertex] at (5+0,-2) {};
  		
  		\draw[edge] (9.5+0,2) -- (9.5+-2,0);
  		\draw[edge] (9.5+0,2) -- (9.5+-0.5,0);
  		\draw[edge] (9.5+0,2) -- (9.5+2,0);
  		\draw[edge] (9.5+0,2) -- (9.5+1.625,0);
  		\draw[edge] (9.5+0,2) -- (9.5+1.25,0);
  		\draw[edge] (9.5+0,2) -- (9.5+0.875,0);
  		\draw[edge] (9.5+0,2) -- (9.5+0.5,0);
  		\draw[edge] (9.5+-2,0) -- (9.5+-0.5,0);
  		\draw[edge] (9.5+-2,0) -- (9.5+0,-2);
  		\draw[edge] (9.5+-1.5,0) -- (9.5+0,-2);
  		\draw[edge] (9.5+-1,0) -- (9.5+0,-2);
  		\draw[edge] (9.5+-0.5,0) -- (9.5+0,-2);
  		\draw[edge] (9.5+2,0) -- (9.5+0.5,0);
  		\draw[edge] (9.5+2,0) -- (9.5+0,-2);
  		\draw[edge] (9.5+0.5,0) -- (9.5+0,-2);
  		
  		\node[vertex] at (9.5+0,2) {};
  		\node[vertex] at (9.5+-2,0) {};
  		\node[vertex] at (9.5+-1.5,0) {};
  		\node[vertex] at (9.5+-1.0,0) {};
  		\node[vertex] at (9.5+-0.5,0) {};
  		\node[vertex] at (9.5+2,0) {};
  		\node[vertex] at (9.5+1.625,0) {};
  		\node[vertex] at (9.5+1.25,0) {};
  		\node[vertex] at (9.5+0.875,0) {};
  		\node[vertex] at (9.5+0.5,0) {};
  		\node[vertex] at (9.5+0,-2) {};
  	\end{tikzpicture}
  \caption{Graphs on $11$ vertices with $20$ edges and no proper  (not necessarily induced) subgraphs with minimum degree $3$.} \label{NonInducedExamplesFigure}
\end{figure}  
\begin{theorem}\label{NonInducedStructure} The family ${\cal G}$ consists
of  all wheels and those graphs that are formed, for some $i$ and $j$,  
from a copy of $H_i$ with connectors $x$ and $y$ and a copy of $H_j$ with connectors 
$x'$ and $y'$ by letting $x=x'$ and $y=y'$ or by letting $x=y'$ and $y=x'$.
\end{theorem}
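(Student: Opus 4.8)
The plan is to prove both inclusions, with essentially all the work in showing that every member of $\cal G$ has the stated form. The key tool I would set up first is an elementary reformulation: since a graph with $2|G|-2$ edges always contains an induced subgraph of minimum degree $3$, and $G\in{\cal G}$ forbids a \emph{proper} one, every $G\in{\cal G}$ has $\delta(G)\ge 3$, and moreover $G\in{\cal G}$ exactly when the only subgraph of $G$ with minimum degree $\ge 3$ is $G$ itself. Equivalently, deleting any single edge of $G$ must collapse its entire $3$-core (iterated deletion of vertices of degree $<3$ destroys everything). This collapse/minimality statement is the lever for the whole argument.

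For the easy inclusion I would first verify, from the edge formulas, that $W_n$ and each graph glued from $H_i$ and $H_j$ has exactly $2|G|-2$ edges, and then check minimality by a propagation argument. In all of these graphs every edge is incident to a vertex of degree exactly $3$ (the outside vertices of a wheel, the internal vertices of the copies of $H_i$). Hence in any subgraph $H$ with $\delta(H)\ge 3$, every degree-$3$ vertex lying in $H$ must keep all three of its edges, dragging its neighbours into $H$; tracing this around the rim of a wheel, or along the two internal paths of a glued graph, forces in every low vertex, after which the centre, respectively the two connectors (whose degrees would otherwise fall below $3$, so that in particular the degree-$4$ connector $y$ forces the \emph{second} path in), pull in the remainder. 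Thus $H=G$ and these graphs lie in $\cal G$.

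For the hard inclusion let $G\in{\cal G}$, write $B$ for the set of vertices of degree $\ge 4$ and $S$ for the degree-$3$ vertices, and set $b=|B|$. Deleting an edge with both ends in $B$ would leave minimum degree $\ge 3$, so $B$ is independent; deleting a degree-$3$ vertex all of whose neighbours lie in $B$ would do the same, so every vertex of $S$ has a neighbour in $S$. A handshake count yields $\sum_{v\in B}(d(v)-3)=|G|-4$ and fixes the number of edges inside $S$, so $G[S]$ is a subcubic graph of tightly controlled degree sequence. I would dispose of $b=0$ immediately ($G$ is then $3$-regular with $3|G|/2=2|G|-2$, forcing $|G|=4$ and $G=K_4=W_4$) and $b=1$ (the unique high vertex then has degree $|G|-1$, hence is universal, so $G[S]$ is $2$-regular, a disjoint union of cycles; more than one cycle would give a proper wheel, so $G=W_{|G|}$).

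The case $b\ge 2$ is the heart of the proof and the main obstacle, since here everything must be extracted from minimality by repeatedly exhibiting a proper subgraph of minimum degree $3$ — a smaller wheel, a smaller glued graph, or a small dense gadget — whenever the configuration deviates. I would argue successively that (i) $G[S]$ has no vertex of degree $3$, so it is a disjoint union of paths and cycles (a branch vertex, together with the hubs forced onto the leaves it leads to, produces a proper min-degree-$3$ subgraph); (ii) $G[S]$ has no cyclic component (a cycle sending its third edges to one hub is a proper wheel, while splitting them between two hubs contradicts the degree-sum identity), so $G[S]$ is a union of paths, and the counting identities then force exactly two paths whose four endpoints are each adjacent to two hubs; (iii) along each path the hub receiving the ``fan'' edges is constant, since a switch in the middle yields a short proper min-degree-$3$ subgraph; and (iv) there are exactly two hubs, i.e.\ $b=2$. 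Combining (i)--(iv), the two internal paths with their fans, capped by the four doubly-attached endpoints, are precisely a copy of $H_i$ and a copy of $H_j$ identified along their connectors, the two gluing types $x=x',\,y=y'$ and $x=y',\,y=x'$ corresponding to whether the two fans are anchored at the same hub or at different hubs. I expect the delicate part throughout to be the bookkeeping in steps (i)--(iv) verifying that each forbidden feature genuinely produces a proper subgraph of minimum degree $3$; this is exactly where the full strength of the collapse characterization is needed, and in particular where the bound $b\le 2$ must come from.
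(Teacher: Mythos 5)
Your approach is genuinely different from the paper's: the paper obtains the degeneracy ordering of Lemma~\ref{Ordering}, uses the observation that no two vertices of degree at least $4$ are adjacent to locate an induced copy of $H_m$ whose internal vertices have no outside neighbours, proves that contracting this copy to an edge keeps the graph in $\mathcal G$, and then inducts on $|G|$. You instead attempt a direct global classification via the partition into the degree-$3$ set $S$ and the set $B$ of hubs. Your easy inclusion, the independence of $B$, the identity $\sum_{v\in B}(d(v)-3)=|G|-4$, and the cases $b=0$ and $b=1$ are all correct and cleanly argued.

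However, the case $b\geq 2$ --- which you yourself identify as the heart of the proof --- is not actually proved. Steps (i)--(iv) are assertions with one-line parenthetical hints, and at least two of them do not hold up as stated. First, the counting identity does not ``force exactly two paths'': if $G[S]$ is a disjoint union of $p$ paths, counting $S$--$B$ edges two ways gives $|S|+2p=|S|+4b-4$, i.e.\ $p=2b-2$, so the conclusion ``two paths'' in step (ii) already presupposes $b=2$, which is only claimed in step (iv); and no argument for $b\le 2$ is ever given --- you explicitly defer it (``where the bound $b\le 2$ must come from''). Second, the justifications offered for (i) (``a branch vertex \dots produces a proper min-degree-$3$ subgraph'') and (iii) (``a switch in the middle yields a short proper min-degree-$3$ subgraph'') are not substantiated: in each case one must exhibit a concrete proper subgraph of minimum degree $3$, and it is not clear which subgraph that would be without already knowing more of the structure. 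Since the entire theorem for $b\ge 2$ rests on these four steps, the proposal has a genuine gap; to close it you would either need to carry out the case analysis in full (including a real proof that $b\le 2$) or switch to something like the paper's contraction-and-induction argument, which avoids the global analysis altogether.
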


For the proof we first recall some basic properties of 
graphs with no induced subgraphs of minimum degree $3$.

Recall from the introduction that the following lemma is easy to prove by induction.
\begin{lemma}\label{Deg3Subgraph}
Every graph on $n\geq2$ vertices with at least $2n-2$ edges contains an induced subgraph with minimum degree $3$.
\end{lemma}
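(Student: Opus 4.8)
The plan is to prove the statement by induction on $n$, exactly as the phrasing ``easy to prove by induction'' suggests. The engine of the argument is the dichotomy that every graph either already has minimum degree at least $3$, or else has a vertex of small degree that can be deleted while preserving the hypothesis. Concretely, given $G$ on $n$ vertices with at least $2n-2$ edges, I would first ask whether $G$ has a vertex $v$ of degree at most $2$. If no such vertex exists, then $G$ itself has minimum degree at least $3$ and is the required induced subgraph, and we are done.

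If such a $v$ does exist, I would delete it and apply the induction hypothesis to $G-v$. The key bookkeeping is that removing a vertex of degree at most $2$ destroys at most $2$ edges, so $G-v$ has $n-1$ vertices and at least $(2n-2)-2 = 2(n-1)-2$ edges; thus $G-v$ satisfies the hypothesis of the lemma with $n-1$ in place of $n$. By induction $G-v$ contains an induced subgraph $H$ of minimum degree $3$. The only point needing a word of care is the qualifier \emph{induced}: since $H$ is induced in $G-v$ and $G-v$ is an induced subgraph of $G$, $H$ is induced in $G$ as well, because ``being an induced subgraph of'' is a transitive relation. This closes the inductive step.

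For the base, I would note that the statement is vacuously true for $n \le 3$: a simple graph on $n$ vertices has at most $\binom{n}{2}$ edges, and $\binom{n}{2} < 2n-2$ whenever $n \le 3$, so no graph meets the hypothesis and there is nothing to prove. Consequently the recursion never actually bottoms out on a real graph---whenever a deletion would bring us down to $n \le 3$ the starting graph must already have had minimum degree $3$ (for instance, the only graph on $4$ vertices with $2n-2 = 6$ edges is $K_4$). I do not anticipate any genuine obstacle: the argument is a standard deletion induction, and the sole subtleties are the tight edge count after deletion and the transitivity of the induced-subgraph relation. As a cross-check, one may instead argue non-inductively: a graph with no subgraph of minimum degree $3$ is by definition $2$-degenerate and hence has at most $2n-3$ edges, so having at least $2n-2$ edges forces a (possibly non-induced) subgraph of minimum degree $3$, and passing to the induced subgraph on its vertex set only increases degrees, yielding the induced version.
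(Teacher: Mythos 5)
Your proof is correct, and it is exactly the deletion induction that the paper alludes to (the paper itself only remarks that the lemma "is easy to prove by induction" without writing it out). The bookkeeping ($2$ edges lost per deleted vertex versus the drop of $2$ in the bound $2n-2$), the transitivity of the induced-subgraph relation, and the vacuous base cases are all handled properly.
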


For degree $3$-critical graphs, the induced subgraph of minimum degree $3$ (guaranteed by the previous lemma) must be the whole $G$. For these graphs, Erd\H os et al.
\cite{EFGS} presented a special ordering to the vertices.  Given an ordering $x_1, \dots, x_n$ of $V(G)$ we let the \emph{forward neighbourhood} of $x_i$, denoted  $N^+(x_i)$, be $N^+(x_i)= N(x_i)\cap \{x_{i+1}, \dots, x_{n}\}$.   The \emph{forward degree} of $x_i$  is $d^+(x_i)=|N^+(x_i)|$. 
The following lemma is essentially from \cite{EFGS}.  We prove it here in a
slightly stronger formulation. Notice that the lemma 
considers not just graphs from $\mathcal G$, but degree $3$-critical graphs in general.
We will make use of this in the next section.
\begin{lemma}\label{Ordering}
For every degree $3$-critical graph $G$ on $n$ vertices there is 
an ordering  $x_1, \dots, x_n$ of the vertices, such that the following hold.
\begin{enumerate}[(i)]
\item $d^+(x_1)=3$.
\item For $2\leq i\leq n-2$, $d^+(x_i)=2$.
\item $d^+(x_{n-1})=1$.
\item If furthermore $n \geq 7$, then $d(x_n) \geq 4$.
\end{enumerate}
\end{lemma}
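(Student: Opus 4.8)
The plan is to obtain the ordering by running a greedy vertex-peeling process and reading off the forward degrees from an edge-count invariant that criticality forces. First I would record two consequences of degree $3$-criticality: since $G$ is its own induced subgraph of minimum degree $3$ (Lemma~\ref{Deg3Subgraph}), every vertex has degree at least $3$; and counting gives $\sum_{v}(d(v)-3)=2e-3n=(4n-4)-3n=n-4\ge 0$, so at least four vertices have degree exactly $3$. The ordering is then produced by repeatedly deleting a vertex of minimum degree: take $x_1$ to be a vertex of degree $3$, and for $i\ge 2$ let $x_i$ be a vertex of minimum degree in the tail graph $G_i=G[\{x_i,\dots,x_n\}]=G-\{x_1,\dots,x_{i-1}\}$.

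The key is to show each deletion has exactly the prescribed forward degree. Deleting the degree-$3$ vertex $x_1$ leaves $e(G_2)=2n-5=2|V(G_2)|-3$, and I claim the invariant $e(G_i)=2|V(G_i)|-3$ is maintained as long as each $x_i$ ($i\ge 2$) has degree exactly $2$ in $G_i$, since such a deletion drops both sides by the same amount. To see that the minimum degree of $G_i$ is exactly $2$ while $|V(G_i)|\ge 4$: $G_i$ is a proper induced subgraph of $G$, so by criticality it is not of minimum degree $3$ and hence has a vertex of degree $\le 2$; and no vertex $v$ of degree $\le 1$ can occur, for then $G_i-v$ would have at least $2|V(G_i)|-4=2|V(G_i-v)|-2$ edges on $\ge 3$ vertices and so, by Lemma~\ref{Deg3Subgraph}, contain an induced subgraph of minimum degree $3$ — a proper induced subgraph of $G$, contradicting criticality. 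Thus $d^+(x_i)=2$ for $2\le i\le n-2$. The invariant then forces $G_{n-2}$ (three vertices, three edges) to be a triangle and $G_{n-1}$ (two vertices, one edge) to be a single edge, yielding $d^+(x_{n-2})=2$, $d^+(x_{n-1})=1$, and $d^+(x_n)=0$. This establishes (i)–(iii).

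For (iv) I would exploit the remaining freedom in the choice of minimum-degree vertices so as to keep a high-degree vertex until the end. Since $n\ge 7$ the excess $\sum_v(d(v)-3)=n-4\ge 3$ is positive, so $G$ has a vertex $z$ with $d(z)\ge 4$; note that $z$ is never eligible as $x_1$. I would run the peeling so that at every stage with $|V(G_i)|\ge 4$ we delete a vertex of degree $\le 2$ other than $z$, preferring a non-neighbour of $z$ when one is available; the aim is that $z$ survives with degree $\ge 3$ until the final triangle, in which it necessarily has degree $2$, after which we keep it as $x_n$, giving $d(x_n)=d(z)\ge 4$. Because the terminal three vertices always form a triangle, any of them may be taken as $x_n$, and the copy of $K_4$ minus an edge appearing at $G_{n-3}$ likewise offers a choice of which degree-$2$ vertex to delete; these supply the exchange moves needed to reroute the surviving vertex.

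The main obstacle is precisely to rule out getting stuck, i.e. reaching a tail $G_i$ with $|V(G_i)|\ge 4$ in which $z$ is the unique vertex of degree $\le 2$. A short edge count already limits the damage: in such a configuration $z$ cannot have degree $\le 1$, for otherwise $G_i-z$ would again have at least $2|V(G_i-z)|-2$ edges and produce a forbidden induced subgraph of minimum degree $3$; hence $d_{G_i}(z)=2$ exactly and every other vertex has degree $\ge 3$. I would then trace back to the last moment a neighbour of $z$ was deleted and apply the triangle/$(K_4-e)$ exchange, or re-select $z$ among the degree-$\ge 4$ vertices, using the bound $n-4\ge 3$ to guarantee that some degree-$\ge 4$ vertex can be carried to the terminal triangle. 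Making this rerouting rigorous — showing the stuck configuration can always be avoided for $n\ge 7$ — is the crux of the proof, onto whose routine skeleton (parts (i)–(iii)) it is grafted.
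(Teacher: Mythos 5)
Your parts (i)--(iii) are correct and essentially identical to the paper's argument: the same minimum-degree peeling, the same invariant $e(G_i)=2|V(G_i)|-3$, and the same use of Lemma~\ref{Deg3Subgraph} to exclude vertices of degree at most $1$ in the tails. The genuine gap is part (iv), where you have not given a proof: you fix a vertex $z$ with $d(z)\geq 4$ in advance and try to steer the peeling so that $z$ survives to the end, but you yourself identify the ``stuck'' configuration (a tail on at least four vertices in which $z$ is the unique vertex of degree at most $2$) and concede that ruling it out ``is the crux of the proof.'' Nothing in your sketch excludes it --- indeed the invariant gives degree sum $4|V(G_i)|-6$, which is perfectly consistent with one vertex of degree $2$ and all others of degree at least $3$ once $|V(G_i)|\geq 5$ --- and the proposed remedies (tracing back to the last deleted neighbour of $z$, or re-selecting $z$) are left as intentions rather than arguments. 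As written, (iv) is unproved.

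The paper avoids this global bookkeeping entirely with a local, a posteriori argument. Run the standard peeling and look only at the last six vertices: by the invariant, $G[\{x_{n-5},\dots,x_n\}]$ has $6$ vertices and $9$ edges, and (since $n\geq 7$ guarantees $x_{n-5}\neq x_1$, so $x_{n-5}$ has degree $2$ there) the degree sum $18$ forces some vertex $v$ of degree at least $4$ in this induced subgraph, hence in $G$. Since $x_{n-5}$ and $x_{n-4}$ have degree at most $2$ and $3$ respectively in it, $v$ lies among the last four vertices, which induce $K_4$ minus an edge. One then permutes only these last four: take a degree-$2$ vertex of the $K_4-e$ other than $v$ as the new $x_{n-3}$, put $v$ last, and note the remaining two span a triangle with $v$. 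This preserves (i)--(iii) because the forward neighbourhoods of $x_1,\dots,x_{n-4}$ depend only on the \emph{set} of later vertices, which is unchanged. No high-degree vertex needs to be carried through the whole process; you should replace your rerouting scheme with an argument of this finite, local kind.
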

\begin{proof}
We define $x_i$ recursively.  Let $x_1$ be a vertex of minimum degree in $G$. 
Suppose that we have already defined $x_{1}, x_{2}, \dots, x_{i}$.
Then we let $x_{i+1}$ be a vertex of minimal degree in $G-\{ x_{1}, \dots-, x_i\}$.

For (i), notice that the average degree of $G$ is less than $4$, so $d(x_1)\leq 3$. To see that $d(x_1)\geq 3$, notice that otherwise the graph $G-x_1$ would have at least 
$e(G)-2= 2(n-1) -2$ edges and Lemma~\ref{Deg3Subgraph} would imply the existence of
an induced subgraph of $G-x_1$ of minimum degree $3$, a contradiction to $G$ being degree
$3$-critical. Hence $d(x_1)=3$.

For (ii), we proceed by induction to show that for all $i$, $1\leq i\leq n-2$, we have 
$e(G-\{x_1, \dots, x_i\})=2(n-i)-3$. The case $i=1$ follows from (i).  
Let $i>1$ and assume $e(G-\{x_1, \dots, x_{i-1}\})=2(n-(i-1))-3$. 
First notice that degree $3$-criticality of $G$ implies 
both $d^+(x_{i})\leq 2$ and $e(G-\{x_1, \dots ,x_{i}\}) \leq 2(n-i)-3$.
Indeed, otherwise the minimum degree of the
induced subgraph $G-\{ x_1, \dots, x_{i-1}\}$ would be exactly $3$ or
$G-\{x_1, \dots ,x_{i}\}$ would contain an induced subgraph of minimum degree $3$
by Lemma~\ref{Deg3Subgraph}.
 On the other hand, $e(G-\{x_1, \dots ,x_{i}\}) = e(G-\{ x_1, \dots , x_{i-1}\})
-d^+(x_{i}) \geq 2(n-(i-1)) -3 - 2$ by induction, implying both $e(G-\{x_1, \dots , x_{i}\}) = 2(n-i)-3$ 
and $d^+(x_i) =2$.  

Part (iii) now follows from $e(G-\{x_1, \dots ,x_{n-2}\})=1$.

For (iv), assume that $n \geq 7$. Let $x_1,\dots, x_n$ be the ordering of the vertices of $G$ produced by the above procedure.
Notice that the graph $G[\{x_{n - 5}, x_{n - 4}, \dots, x_n\}]$ must contain a vertex $v$ of degree at least $4$ in $G[\{x_{n - 5}, x_{n - 4}, \dots, x_n\}]$ (since it has $6$ 
vertices and $9$ edges and contains a vertex of degree $2$ (here we use that 
$x_{n-5}\neq x_1$).  Since $d(v)\geq 4$, $v$ must be one of $x_{n - 3}$, $x_{n - 2}$, $x_{n - 1}$, or $x_n$.  The graph $G[\{x_{n - 3},  \dots, x_n\}]$ has $4$ vertices and $5$ edges, and so contains a vertex $x'_{n - 3} \neq v$ of degree $2$ in $G[\{x_{n - 3},  \dots, x_n\}]$. 
Let $x'_{n - 2}$, $x'_{n - 1}$ be the two vertices in $\{x_{n - 3}, x_{n - 2}, x_{n - 1}, x_n\}\setminus\{v, x'_{n - 3}\}$ in an arbitrary order.  Since 
$G[\{ x'_{n-2}, x'_{n-1}, v\}]$ spans a triangle, the ordering of $G$ given by 
$x_1, x_2, \dots x_{n - 5}, x_{n - 4}, x'_{n - 3}, x'_{n - 2}, x'_{n - 1}, v$ 
satisfies (i) -- (iv).
\end{proof}

\begin{proof}[Proof of Theorem~\ref{NonInducedStructure}]
First we show that if $G$ is a wheel or a graph formed from gluing $H_i$ and $H_j$ together, then $G$ is in $\mathcal G$.
If $G$ has a subgraph  $H$ of minimum degree $3$ and vertex 
$v\in V(H)$ with $d_G(v)=3$, then the three neighbours of  $v$ must all be in $H$. Hence the connected components of the induced subgraph of $G$ on its vertices of degree $3$ 
must either be fully contained in $H$ or fully missing.
Wheels have only one such component, and graphs formed from gluing $H_i$ and $H_j$ together as in the theorem have two such components.
Using this, it is easy to check that these graphs have no proper subgraphs of 
minimum degree $3$. 

For the reverse direction
let $G$ be an $n$-vertex graph with $2n-2$ edges  and no proper (not necessarily induced) subgraphs with minimum degree $3$.  
From Lemma~\ref{Ordering}, we have that $\delta(G)\geq 3$.
We formulate the property of $G$ that will be most important for us.

\begin{observation}\label{ObservationDegree4}
 The graph $G$ does not have 
two adjacent vertices of degree $\geq 4$. 
\end{observation}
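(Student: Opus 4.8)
The plan is to argue by contradiction, exploiting the defining property of $\mathcal{G}$ together with the crucial feature that here we are forbidding \emph{non-induced} subgraphs of minimum degree $3$ as well. The point is that once non-induced subgraphs are permitted, deleting a single edge is already a legitimate way to manufacture a proper subgraph. So suppose $G$ contains two adjacent vertices $u$ and $v$ with $d_G(u)\geq 4$ and $d_G(v)\geq 4$, and consider the spanning subgraph $G-uv$ obtained by deleting the edge $uv$. Since $e(G-uv)=2n-3<2n-2=e(G)$, this subgraph is certainly proper.

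The key (and essentially only) step is to verify that $G-uv$ still has minimum degree at least $3$. By Lemma~\ref{Ordering} we already know $\delta(G)\geq 3$. Deleting the edge $uv$ leaves the degree of every vertex other than $u$ and $v$ unchanged, hence still at least $3$, while the degrees of $u$ and $v$ each drop by exactly one, to $d_G(u)-1\geq 3$ and $d_G(v)-1\geq 3$ respectively. Therefore $\delta(G-uv)\geq 3$, so $G-uv$ is a proper subgraph of $G$ with minimum degree $3$. This contradicts the assumption that $G$ has no proper (not necessarily induced) subgraph of minimum degree $3$, and forces $G$ to have no two adjacent vertices of degree at least $4$, establishing Observation~\ref{ObservationDegree4}.

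I do not expect any real obstacle here: the entire argument rests on the twin facts that edge deletion produces a proper subgraph (valid precisely because non-induced subgraphs are allowed) and that a vertex of degree at least $4$ can afford to lose one incident edge without dropping below degree $3$. The only point needing a moment's care is to confirm $\delta(G)\geq 3$ to begin with, which is exactly what Lemma~\ref{Ordering} supplies (via part~(i) and the minimality used to select $x_1$). Consequently the proof should be just a couple of lines.
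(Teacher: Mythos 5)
Your proof is correct and is exactly the paper's argument: delete the edge $uv$ to obtain a proper (non-induced) subgraph, and check that minimum degree $3$ is preserved because $\delta(G)\geq 3$ and both endpoints had degree at least $4$. The paper states this in one sentence; you have merely spelled out the same reasoning in full detail.
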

Indeed, the removal of the edge between two vertices of degree $4$ 
would create a proper subgraph of $G$ minimum degree $3$,
a contradiction.

If $|G|\leq 6$, then it is easy to check (say by considering the ordering given in Lemma~\ref{Ordering}) 
that $G$ must be a wheel or the graph obtained by the gluing of two copies of $H_4$. 
Therefore, let us assume that we have $|G|\geq 7$.  

First we show that if $G$ is not a wheel, then it contains a copy of $H_m$ 
for some $m\geq 4$ with a certain structure to its internal vertices.  
\begin{claim}\label{HmSubgraph}
Either $G$ is a wheel or $G$ has an induced subgraph $H_m\subseteq G$ for some $m\geq 4$, such that  none of the internal vertices of $H_m$ have 
neighbours in $G - V(H_m)$. 
\end{claim}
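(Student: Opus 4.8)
The plan is to use the ordering from Lemma~\ref{Ordering} to locate a suitable starting configuration, and then to grow a copy of $H_m$ outward, using Observation~\ref{ObservationDegree4} at every step to control the local structure. Concretely, I would first analyze the top of the ordering $x_1, \dots, x_n$ guaranteed by Lemma~\ref{Ordering}. Since $d(x_n) \geq 4$ (part (iv), using $n \geq 7$) and $d^+(x_{n-1}) = 1$, $d^+(x_{n-2}) = 2$, the last few vertices already force a small dense configuration. I expect the vertex $x_n$ (or the high-degree vertex it points to) to serve as the connector $x$ of degree $\geq 4$ in the eventual $H_m$, since $H_m$ has exactly one vertex of large degree, namely $x$, while $y$ has degree exactly $2$.

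The main structural engine should be Observation~\ref{ObservationDegree4}: no two adjacent vertices both have degree $\geq 4$. I would use this to argue that the neighbourhood of any fixed high-degree vertex $x$ consists of vertices of degree exactly $3$ (apart from possibly one other special vertex), which is precisely the behaviour of the internal vertices $v_1, \dots, v_{m-2}$ in $H_m$ (each internal vertex has degree $3$: two path-edges or a path-edge-plus-$y$-edge, together with the edge to $x$). The idea is to start from $x$, pick a neighbour $v_1$ of degree $3$, and track its third neighbour; because $v_1$ has degree $3$ and is adjacent to $x$, its other two neighbours are constrained, and I would show one of them continues a path $v_1 v_2 v_3 \cdots$ all adjacent to $x$, forming the ``fan'' of $H_m$. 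The second connector $y$ of degree $2$ arises as the common neighbour closing off the two ends $v_1$ and $v_{m-2}$ of this fan.

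The key steps, in order, would be: (1) fix a vertex $x$ of degree $\geq 4$, which exists since $G$ has $2n-2$ edges and average degree just under $4$ but $\delta(G) \geq 3$; (2) show every neighbour of $x$ has degree exactly $3$ by Observation~\ref{ObservationDegree4}; (3) starting at one neighbour of $x$, repeatedly follow the ``other'' neighbours to trace out a maximal path $v_1, v_2, \dots$ all lying in $N(x)$, using at each vertex the fact that its degree is $3$ and one edge already goes to $x$; (4) argue that this process terminates by reaching a degree-$2$ vertex $y$ that connects the two endpoints of the fan, yielding the claimed $H_m$; and (5) verify the subgraph is \emph{induced} and that no internal vertex $v_i$ has a neighbour outside $V(H_m)$ — this last point is immediate once each $v_i$ is shown to have degree exactly $3$ with all three neighbours inside $H_m$.

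The hard part will be step (3)–(4): proving the tracing process is well-defined and actually closes up into the $H_m$ pattern rather than branching off or running into an already-visited vertex in a way that breaks the structure. The subtlety is that a neighbour $v_i$ of $x$ has a third neighbour that could a priori be another neighbour of $x$, the connector $y$, or some vertex outside; I would need Observation~\ref{ObservationDegree4} together with a careful edge-count (each internal vertex contributes a fixed number of edges, and $G$ has exactly $2n-2$ edges) to rule out the bad branchings and to force the two loose ends to share the common degree-$2$ neighbour $y$. If $G$ fails to close up into $H_m$ in this way, I would expect to conclude instead that $x$ is adjacent to \emph{all} other vertices, forcing $G$ to be a wheel, which is the alternative in the claim's dichotomy.
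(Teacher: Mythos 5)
Your overall strategy---use Observation~\ref{ObservationDegree4} to show that the neighbourhood of a high-degree vertex $x$ induces a graph of maximum degree $2$, i.e.\ a union of paths and cycles, and try to read off the fan of an $H_m$ from a path component---is the same engine the paper runs on. But there are two genuine gaps. First, you fix an \emph{arbitrary} vertex $x$ of degree at least $4$, and for an arbitrary such vertex the tracing in your steps (3)--(4) can fail outright. Take $G$ to be two copies of $H_6$ glued at both connectors ($x=x'$, $y=y'$): the connector $y$ has degree exactly $4$, but its four neighbours are pairwise non-adjacent, so $G[N(y)]$ consists of four isolated vertices and no fan with at least two internal vertices (as required for $H_m$, $m\geq 4$) can be grown from $y$. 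So the claim is not that \emph{every} high-degree vertex works; the right vertex must be selected, and the paper does this by taking $x=x_n$, the last vertex of the ordering from Lemma~\ref{Ordering}, whose degree is at least $4$ by part (iv).

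Second, and more centrally, you defer the key step---why the two loose ends of the traced path share a \emph{common} third neighbour $y$---to ``a careful edge-count,'' which is not supplied and which I do not believe can be made to work locally: the global count $e(G)=2n-2$ does not by itself prevent the two fan-ends from having distinct outside neighbours. The paper gets this for free from the ordering: it lets $k$ be minimal with $x_n$ adjacent to all of $x_{k+1},\dots,x_{n-1}$, proves by backward induction (using $d^+(x_i)=2$ and Observation~\ref{ObservationDegree4}) that $\{x_{k+1},\dots,x_{n-1}\}$ induces a path all of whose vertices are adjacent to $x_n$, and then observes that $x_k$ (when $k>0$) is non-adjacent to $x_n$, hence both of its two forward neighbours lie in that path, and Observation~\ref{ObservationDegree4} forces them to be the two \emph{endpoints} of the path. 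Thus $y=x_k$ is produced by the ordering, not by counting edges; the case $k=0$ is exactly the wheel alternative. Without an analogue of this extremal choice, your steps (3)--(5) do not close.
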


\begin{proof}
Consider the ordering $x_1,\dots,x_n$ of the vertices of $G$ as given by Lemma~\ref{Ordering}.   
Let $k$ be the smallest integer such that 
$x_n$ is adjacent to every vertex in $\{ x_{k+1}, \ldots, x_{n-1}\}$.
Note that $k\in \{0, 1, \ldots n-3\}$, since by part (ii) and (iii) of 
Lemma~\ref{Ordering}, $x_{n-2}$ and $x_{n-1}$ are adjacent to $x_n$.
We will show that if $k=0$ then $G$ is a wheel and otherwise the subgraph 
$G[\{x_{k}, \dots, x_n\}]$ is the sort of
copy of $H_{n-k+1}$ that we need, with connectors $x=x_n$ and $y=x_{k}$. 
 
We plan to reconstruct 
$G[\{ x_{\ell}, \ldots, x_n\}]$ from the trivial graph on $\{ x_n\}$
by adding back one-by-one the vertices $x_i$ for each 
$i=n-1, n-2 \ldots, \ell$ (in reverse order), 
together with their incident edges to $\{ x_{i+1}, \ldots , x_n\}$. 

First we show by backward induction that 
the induced subgraph $G[\{ x_{i}, \ldots , x_{n-1}\}]$ is a path $R_i$ for every 
$i=\max\{ k+1, 2\}, \ldots , n-2$.
Indeed, for every $i=\max\{ k+1, 2\}, \ldots , n-2$ the vertex $x_i$ is adjacent to $x_n$ 
and by part (ii) of Lemma~\ref{Ordering} to exactly one other vertex $x_j$ in
$\{ x_{i+1}, \ldots , x_{n-1}\}$.  By part (iv) of Lemma~\ref{Ordering} the degree of 
$x_n$ in $G$ is at least $4$ and since $x_jx_n\in E(G)$, Observation~\ref{ObservationDegree4} implies
that the degree of $x_j$ in $G[\{ x_{i+1}, \ldots , x_{n-1}\}]$ must be at most one. 
So $x_j$ is one of the 
endpoints of $R_i$, thus giving rise to a path $R_{i-1}$ that is induced on 
$\{ x_i, \ldots , x_{n-1}\}$. 

Now we separate into two cases. 

If $k >0$, then we have  that $G[\{ x_{k+1}, \ldots, x_{n-1}\}]$ is a path $R_k$ 
with all its vertices adjacent to $x_n$. Since $x_k$ is not adjacent to $x_n$,
both of its forward neighbours must be in $\{ x_{k+1}, \ldots , x_{n-1}\}$.
If any of these neighbours would be a vertex $x_j$, $k < j < n$,  
with degree at least $2$ in $G[\{ x_{k+1}, \ldots, x_{n-1}\}]$, then we get a contradiction 
from Observation~\ref{ObservationDegree4} as $x_jx_n\in E(G)$.
Hence $x_k$ must be adjacent exactly to the two endpoints of the path $R_k$ and then
$G[\{x_{k}, \dots, x_n\}]$ is a 
copy of $H_{n-k+1}$ with connectors $x=x_n$ and $y=x_{k}$ as we promised.
Observe furthermore that 
there cannot be any additional edges between any $x_i\in \{x_{k+1}, \dots, x_{n-1}\}$ 
and $V(G)\setminus \{ x_k, \ldots , x_n\}$, 
since otherwise the degree of $x_i$ in $G$ would be at least $4$ providing a contradiction from Observation~\ref{ObservationDegree4}
as $x_ix_n\in E(G)$.

If $k=0$, then $G[\{ x_{2}, \ldots, x_{n-1}\}]$ is a path $R_2$ 
with all its vertices adjacent to $x_n$. Again, none of the neighbours $x_j$ of
$x_1$ can be an internal vertex of $R_2$, otherwise we obtained a contradiction from 
Observation~\ref{ObservationDegree4} since $x_1x_n$ and  $x_jx_n$ are both edges  of $G$. 
Recall that $x_1$ has three neighbours (part (i) of Lemma~\ref{Ordering}).
These then must be the two endpoints of $R_2$ and $x_n$, giving rise to a wheel with
center $x_n$.  
\end{proof}

Given a copy of $H_m$ contained in $G$, we define $G/H_m$ to be the graph formed out of $G$ by removing the internal vertices of 
$H_m$, and joining the connectors of $H_m$ by an edge.  It turns out that if $H_m$ has the structure produced by 
Claim~\ref{HmSubgraph}, then the graph $G/H_m\in {\cal G}$, so we will be able to apply induction.
\begin{claim}\label{HmContraction}
Suppose that  graph $G\in {\cal G}$ has an induced subgraph $H_m\subseteq G$ for some $m$, 
such that none of the internal vertices of $H_m$ have neighbours in $G\setminus V(H_m)$.  
Then $G/H_m\in {\cal G}$.
\end{claim}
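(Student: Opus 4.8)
The plan is to verify the edge count directly and then forbid proper minimum-degree-$3$ subgraphs by a lifting argument. First I would record the local structure: since $H_m\subseteq G$ is \emph{induced} and $xy\notin E(H_m)$, the edge $xy$ is not present in $G$, so adjoining it in $G/H_m$ produces no multi-edge. Moreover every one of the $2m-3$ edges of $H_m$ is incident to an internal vertex, and by hypothesis the internal vertices have no neighbours outside $V(H_m)$. Hence forming $G/H_m$ deletes exactly the $m-2$ internal vertices together with all $2m-3$ edges of $H_m$, and adds the single edge $xy$, giving $|G/H_m|=n-m+2$ and $e(G/H_m)=(2n-2)-(2m-3)+1=2(n-m+2)-2$, precisely the count required for ${\cal G}$.

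It then remains to show $G/H_m$ has no proper subgraph of minimum degree $3$. I would argue by contradiction: assuming $F\subsetneq G/H_m$ with $\delta(F)\geq 3$, I will build a proper subgraph of $G$ of minimum degree $3$, contradicting $G\in{\cal G}$. The argument splits on whether $F$ uses the new edge $xy$. If $xy\notin E(F)$, then every edge of $F$ already lies in $G$ (the sole edge of $G/H_m$ absent from $G$ is $xy$) and every vertex of $F$ lies in $V(G)$, so $F$ is itself a subgraph of $G$; it is proper because it omits the internal vertices of $H_m$, and we are done immediately.

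The substantive case is $xy\in E(F)$, where I would \emph{re-inflate} the contraction by setting $F'=(F-xy)\cup H_m$, adding back all internal vertices and all edges of $H_m$. Since $F-xy$ and $H_m$ are both subgraphs of $G$ sharing no edge (each edge of $H_m$ meets an internal vertex, which $F-xy$ avoids entirely), $F'$ is a bona fide subgraph of $G$. The degree bookkeeping is the heart of the matter: every internal vertex keeps its degree $3$ from $H_m$; the connector $x$ loses only the edge $xy$ but gains its $m-2\geq 2$ edges into the internal vertices (a neighbour set disjoint from its $F$-neighbours), and likewise $y$ loses $xy$ but gains its two edges to $v_1$ and $v_{m-2}$, so both stay at degree $\geq 3$; all remaining vertices retain their $F$-degree. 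Thus $\delta(F')\geq 3$.

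Finally I would confirm $F'$ is proper in $G$ by tracking the witness to the properness of $F$: if $F$ omits a vertex of $G/H_m$, that vertex is non-internal and still missing from $F'$; if instead $V(F)=V(G/H_m)$ and $F$ omits an edge $e\neq xy$, then $e\in E(G)$ is not incident to any internal vertex and so is still absent from $F'$. Either way $F'\neq G$, completing the contradiction. I expect the only delicate steps to be this properness bookkeeping and the degree count at $x$ and $y$, where the two edges recovered from $H_m$ must exactly compensate for the deleted edge $xy$; both succeed precisely because the internal vertices are confined to $V(H_m)$ and $m\geq 4$.
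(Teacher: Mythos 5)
Your proposal is correct and follows essentially the same route as the paper: the identical edge count, and for a proper subgraph containing $xy$ the same re-inflation $(K-xy)\cup H_m$ together with the observation that degrees of non-internal vertices do not decrease (the paper phrases this contrapositively, locating a vertex of degree at most $2$ in the inflated graph, but the content is the same). Your explicit verification that the inflated subgraph is still proper is a point the paper glosses over, and it checks out.
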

\begin{proof}
Let $x$ and $y$ be the connectors of $H_m$.  By the assumptions of the lemma and the definition of $G/H_m$, the only edges which were present in $G$ and are not present in $G/H_m$ are the $2m-3$ edges of $H_m$.  The only new edge in $G/H_m$ is the edge $xy$.  From the definition of $G/H_m$, we have $|G/H_m|=|G|-m+2$.  Combining this with $e(G)=2|G|-2$, we obtain $e(G/H_m)=e(G)-2m+4=2|G|-2m+2=2|G/H_m|-2$.

We will show that for every proper subgraph $K\subsetneq G/H_m$, we have $\delta(K)\leq 2$.  
If $K$ does not contain the edge $xy$, then $K$ is also a proper subgraph of $G$, and then, since $G\in {\cal G}$, 
$K$ must satisfy $\delta(K)\leq 2$.  Suppose now that $K$ does contain the edge $xy$.  
Let $K'$ be the graph formed from $K$ by removing the edge $xy$, 
and adding the vertices and edges of $H_m$. Since $G\in {\cal G}$, the proper subgraph $K' \subsetneq G$ must contain a 
vertex $v$ of degree at most $2$.  
The vertex $v$ cannot be one of the internal vertices of $H_m$, since by the definition of $H_m$, all internal vertices have degree $3$.   
So $v$ is also a vertex of $K$. But the degree of any vertex of $V(K)$ in $K'$ is at least as large as its degree in $K$ 
(in fact, unless $u=x$ or $u=y$,  the degree of $u$ in $K$ is equal to its degree in $K'$). 
Hence the vertex $v\in V(K)$ has degree at most $2$ in $K$ as well.
\end{proof}

Now we are ready to complete the proof of the theorem using induction on $|G|$. 
The initial cases are when $|G|\leq 6$, and are easy to check by hand.
Let $G\in {\cal G}$ be a graph on $n\geq 7 $ vertices. 
We will show that $G$ possesses one of the two structures given in the theorem.

If $G$ is not a wheel, then by Claim~\ref{HmSubgraph} $G$ 
contains an induced copy of $H^*$ of $H_m$ such that  the internal vertices of 
$H^*$ have no neighbours outside of $H^*$.
By Claim~\ref{HmContraction}, $G/H^*\in {\cal G}$. 
Hence, by induction, $G/H^*$ is either a wheel or 
is a graph formed by gluing together a copy of $H_i$ 
with connectors $x$ and $y$ and a copy of $H_j$ 
with connectors $x'$ and $y'$, for some $i,j\geq 4$.

First consider the case when $G/H^*$ is a wheel with center $c$ and outside vertices $w_1, \dots, w_k$. 
Recall that there is an edge in $G/H^*$ between the two connectors of $H^*$.  

Suppose first that the connectors of $H^*$ are $c$ and $w_i$  for some $i$.  
In this case, $G$ is a graph formed from $H_{k+1}$ and $H_m$ by identifying the connectors of the two graphs.  
Indeed, this follows from the fact that removing the edge $cw_i$ from the wheel 
gives a copy of $H_{k+1}$ and from the fact that the internal vertices of $H^*$ have no neighbours outside of $H^*$.  

Suppose now that the connectors of $H_m$ are two adjacent outside vertices of the wheel,
say $w_1$ and $w_2$. 
If $k=3$ then the graph $G/H^*$ is just the complete graph on $4$ vertices, so,
as before, $G$ is a graph formed from $H_{4}$ and $H_m$ with connectors $w_1$ and $w_2$.  So suppose that $k\geq 4$.  This ensures that $d(c)\geq 4$ in $G$.  We also have $d(w_{2})\geq 4$ in $G$ since $w_{2}$ must be connected to $c$, $w_{3}$, as well as all the internal vertices of $H^*$ (of which there are at least $2$).  But this gives a contradiction by Observation~\ref{ObservationDegree4}, since $cw_2$ is an edge of $G$.

Now, consider the case when $G/H^*$ is a graph formed by gluing together an 
$H_i$ and an $H_j$ at their connectors. 
Recall that there is an edge in $G/H^*$ between the two connectors of $H^*$.
Suppose, without loss of generality, that this edge is in $H_i$.  
Let $x$ and $y$ be the connectors of $H_i$ and let 
$v_1, \dots, v_{i-2}$ be its internal vertices. Since $xy\not\in E(H_i)$, 
one of the connectors of $H^*$ must be an internal vertex of $H_i$.
If any internal vertex of $H_i$ which is a connector of $H^*$ is adjacent in $G$ to
any vertex of $\{ x,y\}$  which is not a connector of $H^*$, then
we immediately get a contradiction by Observation~\ref{ObservationDegree4} since both of these vertices
have degree at least $ 4$.
Otherwise, for the internal vertex $v_t$ of $H_i$ which is a connector of $H^*$ 
we must have $1 <t < i-2$, and the other connector vertex must be $x$.
Then the proper 
subgraph  $G - \{v_{1}, v_{2}, \dots,v_{t-1}\}$ has minimum degree $3$,  
contradicting our assumption of $G$ having no such subgraphs.  
This completes the proof of the inductive step and the theorem.
\end{proof}

It is an easy exercise to check that the graphs given in
Theorem~\ref{NonInducedStructure} are pancyclic and hence
Theorem~\ref{NonInduced} follows.

\section{Finding a $6$-cycle}\label{C6Section}
\begin{proposition}
Every degree $3$-critical graph $G$ with $n\geq 6$ contains a $C_6$.
\end{proposition}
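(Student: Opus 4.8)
The plan is to combine the vertex ordering of Lemma~\ref{Ordering} with a minimality argument for a shortest ``long'' cycle. First I would record the structural consequences of the ordering $x_1,\dots,x_n$: we have $\delta(G)\ge 3$, the three vertices $x_{n-2},x_{n-1},x_n$ form a triangle, and reading the ordering backwards exhibits $G$ as a triangle to which one repeatedly attaches a new vertex joined to two (or, for $x_1$, three) earlier vertices. This is an open ear decomposition with all ears of length at most $2$, so $G$ is $2$-connected. The case $n=6$ is then immediate: since $\delta(G)\ge 3=n/2$, Dirac's theorem makes $G$ Hamiltonian, and a Hamilton cycle is a $C_6$. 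So I may assume $n\ge 7$.

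For $n\ge 7$, Dirac's circumference bound for $2$-connected graphs gives a cycle of length at least $\min\{n,2\delta(G)\}\ge 6$. Suppose, for contradiction, that $G$ has no $C_6$, and let $C$ be a shortest cycle of length $\ell\ge 7$. The key step is to localise: among the ears used by $C$, let $u\!-\!a\!-\!w$ be the one whose interior vertex $a$ was added last in the ear decomposition, and let $G_{<a}$ be the (still $2$-connected) graph built before $a$. Since no vertex of $C$ is added after $a$, the two $C$-neighbours of $a$ are exactly $u$ and $w$, so $C=(u\!-\!a\!-\!w)\cup Q$ where $Q\subseteq G_{<a}$ is a $u$--$w$ path of length $\ell-2$. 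Now observe that \emph{any} $u$--$w$ path $R$ in $G_{<a}$ closes up with $u\!-\!a\!-\!w$ into a cycle of length $|R|+2$, because $a\notin G_{<a}$. By minimality of $\ell$ and the assumed absence of a $C_6$, no such cycle may have length in $[6,\ell-1]$; equivalently, every $u$--$w$ path in $G_{<a}$ must have length outside the interval $[4,\ell-3]$.

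I expect the main obstacle to be extracting a contradiction from this ``forbidden intermediate length'' condition, which is precisely the leaf-to-leaf/path-length phenomenon studied in the rest of the paper. Concretely, I would split into two cases. If some $u$--$w$ path in $G_{<a}$ is short (length $\le 3$) while $Q$ has length $\ell-2\ge 5$, one must produce a $u$--$w$ path of length exactly $4$ (giving a $C_6$) by re-routing through the length-$2$ ears and the terminal triangle. Otherwise $Q$ is a geodesic, so $C$ has no usable chords and is essentially induced; then $\delta(G)\ge 3$ forces every vertex of $C$ to have a neighbour off $C$, and I would argue that an off-cycle vertex meeting $C$ at two points at cyclic distance $4$ yields a $C_6$ directly, whereas any other attachment distance produces a cycle of length in $[6,\ell-1]$, contradicting minimality. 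Confining all attachments to windows of at most three consecutive vertices of $C$ and playing this off against $2$-connectivity and finiteness should close the argument. The remaining ingredients---the chord bookkeeping on $C$ and the finite base cases---are routine.
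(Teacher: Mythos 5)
Your setup is sound and takes a genuinely different route from the paper: the observations that reversing the ordering of Lemma~\ref{Ordering} gives an open ear decomposition (hence $2$-connectivity), that Dirac's theorems dispose of $n=6$ and produce a cycle of length at least $6$ in general, and that a shortest cycle $C$ of length $\ell\ge 7$ through the last-added vertex $a$ forbids $u$--$w$ paths of length in $[4,\ell-3]$ in $G_{<a}$, are all correct. The problem is that everything after this is asserted rather than proved, and the assertions sit exactly where the difficulty of the proposition lies. In your first case you claim one ``must produce a $u$--$w$ path of length exactly $4$ by re-routing through the length-$2$ ears and the terminal triangle''; no such re-routing is exhibited, and the central message of this paper is precisely that intermediate path and cycle lengths in these graphs cannot in general be interpolated --- that is how the counterexample to Conjecture~\ref{ErdosConjecture} is built. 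So this step requires an actual construction, not an appeal to plausibility.

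The second case has concrete holes as well. A chord of $C$ joining vertices at cyclic distance $d$ yields cycles of lengths $d+1$ and $\ell-d+1$; for $\ell=7$ and $d=3$ these are $4$ and $5$, so neither minimality nor the absence of a $C_6$ is violated and your claim that $C$ is ``essentially induced'' fails already at the boundary (similarly, $\ell=8$ and $d=4$ gives two $5$-cycles). Likewise, an off-cycle vertex attached at cyclic distance $1$ or $2$ produces only cycles of lengths $3$, $4$, $\ell$, or $\ell+1$, none of which contradicts anything, so attachments confined to short windows are genuinely possible a priori, and ``playing this off against $2$-connectivity and finiteness'' is not an argument. The paper sidesteps all of this by working directly with the ordering: the last four vertices induce $K_4$ minus an edge, one takes the largest $t\le n-4$ whose forward neighbourhood is not $\{x_{n-1},x_n\}$, and in each of the two resulting configurations a six-cycle is written down explicitly. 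To salvage your route you would have to replace the two highlighted steps with explicit constructions of the required $C_6$; as it stands the proposal is an outline with its key steps missing.
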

\begin{proof}
By Lemma~\ref{Deg3Subgraph} we have $\delta(G)\geq 3$.

Let us use 
Lemma~\ref{Ordering} to obtain an ordering $x_1, \dots, x_n$ of the vertices of $G$. 
By part (ii) and (iii) and using $|G|\geq 5$, the graph induced by the last four vertices is a $K_4$ minus an edge.
Let us assume without loss of generality that the missing edge is $x_{n-3}x_{n-2}$, that
is, both $x_{n-1}$ and $x_n$ have degree $3$ in $G[\{ x_{n-3}, x_{n-2}, x_{n-1}, x_n\}]$.

Now let $t\leq n-4$ be the largest index for which the forward neighbourhood of the 
vertex $x_t$ is not $\{ x_{n-1}, x_n\}$ ($t$ exists because, for example ``$1$'' is such an index).

First let us suppose that  $x_t$ has two forward neighbours $x_i$ and $x_j$ outside of
$\{x_{n-1}, x_n\}$.  By the definition of $x_t$ we have that $x_i$ and $x_j$ are both 
adjacent to $x_{n-1}$ and $x_n$. 
Let $m \in [n] \setminus \{ n, n-1, i, j, t\}$ be the largest index
such that the forward neighbourhood of $x_m$ is {\em not} equal to $\{ x_i, x_j\}$ ($m$ exists since $|G|\geq 6$).
Note that if $\{ i, j, t\} \neq \{ n-2, n-3, n-4\}$, then we have $m \geq n-4$ and 
the forward neighbourhood of $x_m$ is $\{ x_n, x_{n-1}\}$. 
Thus $x_{n-1}x_mx_nx_jx_tx_i$ is a six-cycle (see Figure~\ref{6CycleFig1}).
If $\{ i, j, t\} = \{ n-2, n-3, n-4\}$, then the graph 
$G[\{ x_n, \ldots , x_{m+1}\}]$ (see Figure~\ref{6CycleFig1}) has the property
that any pair of vertices, but $\{ x_{n-2}, x_{n-3}\}$ have a path of 
length four between them. Thus the addition of $x_m$ will create a six-cycle.

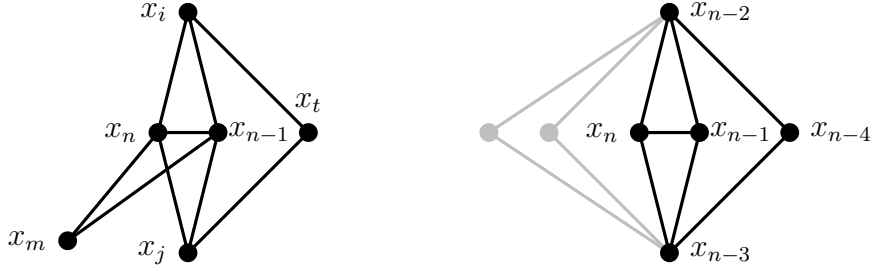
\begin{figure}
  \centering
  	\begin{tikzpicture}[scale=0.8]
		\draw[edge] (0,2) -- (-0.5,0);
		\draw[edge] (0,2) -- (0.5,0);
		\draw[edge] (0,2) -- (2,0);
		\draw[edge] (-0.5,0) -- (0.5,0);
		\draw[edge] (-0.5,0) -- (0,-2);
		\draw[edge] (0.5,0) -- (0,-2);
		\draw[edge] (0,-2) -- (2,0);
		\draw[edge] (-0.5,0) -- (-2,-1.8);
		\draw[edge] (0.5,0) -- (-2,-1.8);

  		\node[vertex, label={left:$x_i$}] at (0,2) {};
  		\node[vertex, label={left:$x_n$}] at (-0.5,0) {};
  		\node[vertex, label={[inner sep=0]right:$x_{n - 1}$}] at (0.5,0) {};
  		\node[vertex, label={left:$x_j$}] at (0,-2) {};
  		\node[vertex, label={above:$x_t$}] at (2,0) {};
  		\node[vertex, label={left:$x_m$}] at (-2,-1.8) {};

		\draw[edge, gray!50] (8+0,2) -- (8-2,0) -- (8+0,-2);
		\draw[edge, gray!50] (8+0,2) -- (8-3,0) -- (8+0,-2);
		\draw[edge] (8+0,2) -- (8+-0.5,0);
		\draw[edge] (8+0,2) -- (8+0.5,0);
		\draw[edge] (8+0,2) -- (8+2,0);
		\draw[edge] (8+-0.5,0) -- (8+0.5,0);
		\draw[edge] (8+-0.5,0) -- (8+0,-2);
		\draw[edge] (8+0.5,0) -- (8+0,-2);
		\draw[edge] (8+0,-2) -- (8+2,0);

  		\node[vertex, fill=gray!50] at (8-2,0) {};
  		\node[vertex, fill=gray!50] at (8-3,0) {};
  		\node[vertex, label={right:$x_{n-2}$}] at (8+0,2) {};
  		\node[vertex, label={left:$x_n$}] at (8+-0.5,0) {};
  		\node[vertex, label={[inner sep=0]right:$x_{n - 1}$}] at (8+0.5,0) {};
  		\node[vertex, label={right:$x_{n-3}$}] at (8+0,-2) {};
  		\node[vertex, label={right:$x_{n-4}$}] at (8+2,0) {};
    	
  	\end{tikzpicture}
  \caption{The two possible configurations which can occur in the case when $x_t$ has two forward neighbours $x_i$ and $x_j$, outside of $\{x_{n-1}, x_n\}$. The grey vertices represent ones which may or may not be present.} \label{6CycleFig1}
\end{figure}  

Suppose now that $x_t$ has exactly one forward neighbour $x_i$, with 
$t+1\leq i \leq n-2$, outside of $\{x_{n-1}, x_n\}$.  
Without loss of generality let $x_n$ be a neighbour of $x_t$ in $\{ x_n, x_{n-1}\}$.
By the definition of $x_t$ we have that $x_i$ is adjacent to both $x_{n-1}$ and $x_n$.  
If $i=n-2$, let us define $s:=n-3$, and otherwise let $s:=n-2$.
Let $m$ be the smallest index such that 
the forward neighbourhood of $x_m$ is neither $\{x_i, x_n\}$ nor $\{x_{n-1}, x_n\}$
($m$ exists since  the index ``$1$'' is certainly of that kind). Then the structure of the 
graph $G[\{ x_{m+1}, \ldots , x_n\}]$ looks like the one 
in Figure~\ref{6CycleFig2}. Observe that for any pair of vertices in such a graph, 
but the pairs $\{ x_{n-1}, x_n\}$ and $\{ x_n, x_i\}$, there is a path of length 
four between them. Hence no matter where 
the two forward neighbours $x_j$ and $x_l$ of $x_m$, with $\{j,l\} \neq \{ n-1, n\}, \{ n,i\}$, are, they close a six-cycle.

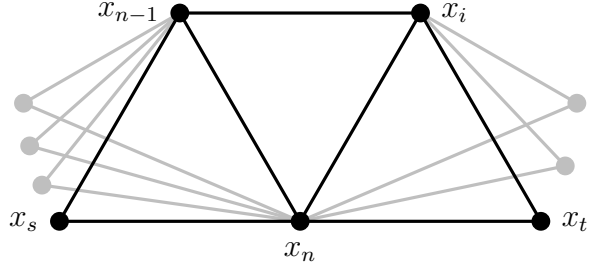
\begin{figure}
  \centering
  	\begin{tikzpicture}[scale=0.8]
		\draw[edge, gray!50] (-2,3.464) -- (-4.291,0.602) -- (0,0);
		\draw[edge, gray!50] (-2,3.464) -- (-4.495,1.254) -- (0,0);
		\draw[edge, gray!50] (-2,3.464) -- (-4.598,1.964) -- (0,0);
		\draw[edge, gray!50] (2,3.464) -- (4.405,0.921) -- (0,0);
		\draw[edge, gray!50] (2,3.464) -- (4.598,1.964) -- (0,0);
		\draw[edge] (-2,3.464) -- (2,3.464);
		\draw[edge] (-2,3.464) -- (-4,0);
		\draw[edge] (-2,3.464) -- (0,0);
		\draw[edge] (2,3.464) -- (0,0);
		\draw[edge] (2,3.464) -- (4,0);
		\draw[edge] (-4,0) -- (4,0);
		
		\node[vertex, fill=gray!50] at (-4.291,0.602) {};
		\node[vertex, fill=gray!50] at (-4.495,1.254) {};
		\node[vertex, fill=gray!50] at (-4.598,1.964) {};
		\node[vertex, fill=gray!50] at (4.405,0.921) {};
		\node[vertex, fill=gray!50] at (4.598,1.964) {};
  		\node[vertex, label={left:$x_{n - 1}$}] at (-2,3.464) {};
  		\node[vertex, label={right:$x_i$}] at (2,3.464) {};
  		\node[vertex, label={left:$x_s$}] at (-4,0) {};
  		\node[vertex, label={below:$x_n$}] at (0,0) {};
  		\node[vertex, label={right:$x_t$}] at (4,0) {};
  	\end{tikzpicture}
  \caption{The possible induced subgraphs $G[x_{m+1}, \ldots , x_n\}]$ in the case when $x_t$ has exactly one forward neighbour $x_i$ outside of $\{x_{n - 1}, x_n\}$.  
The unlabeled vertices may or may not be there.} \label{6CycleFig2}
\end{figure}  

\end{proof}

\section{Concluding remarks}\label{RemarksSection}
In Theorem~\ref{Counterexamples} we constructed degree $3$-critical graphs with no $23$-cycles.  One could ask whether longer cycles could be forbidden as well.  It is easy to use our method to construct sequences of degree $3$-critical graphs with no $m$-cycles for any odd $m\geq 23$. Indeed, combining Proposition~\ref{SteppingUp} with Theorem~\ref{thm:20-avoiding} shows that there are $2k$-avoiding sequences for all $k\geq 10$. Then Lemmas~\ref{CycleLengths} and~\ref{PathLengths} give us degree 3-critical graphs with no cycles of length $2k+3$ for all $k\geq 10$.
It would be interesting to determine the shortest cycle length $\ell$ for which there exist an infinite sequence of degree $3$-critical graphs with no cycle of length $\ell$. From the results in this paper we see that $\ell$ must be between $7$ and $23$.

In this paper we were only able to find infinite sequences of degree $3$-critical graphs which do not contain \emph{odd} cycles. It is not clear whether even cycles can be forbidden in the same way.  We pose the following problem.
\begin{problem}
Is there a function $C(n)$ tending to infinity such that every degree $3$-critical graph on $n$ vertices contains cycles of all lengths $4, 6, 8, \dots, 2C(n)$.
\end{problem}

Another natural extremal question concerns the {\em number} of different cycle length. 
A construction due to Bollob\'as and Brightwell \cite{BB} gives degree $3$-critical graphs
 with no cycles of length greater than $4\log_2 n+ O(1)$.  Their construction is just 
 the graph $G(T_d)$ where $T_d$ is the $1$-$3$-tree having a root with each of his three
 subtrees being a perfect binary tree of depth~$d$.  We conjecture 
 that these graphs give the smallest number of cycle lengths amongst all degree $3$-critical graphs on $n$ vertices.
\begin{conjecture}\label{CycleNumberConjecture}
Every degree $3$-critical graph on $n$ vertices contains cycles of  at least 
$3\log_2 n + O(1)$ distinct lengths.
\end{conjecture}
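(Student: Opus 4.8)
The plan is to take the extremal graphs $G(T_d)$ as a guide for where the constant $3$ comes from, and then to reproduce the two sources of cycle lengths for an arbitrary degree $3$-critical graph. In $G(T_d)$ one has $n\approx 3\cdot 2^{d+1}$, so $d\approx\log_2 n$, and by Lemma~\ref{CycleLengths} the cycle lengths split into two families: the \emph{odd} lengths come from single leaf-leaf paths, whose lengths range over $0,2,\dots,\approx 2d$ and hence give about $\log_2 n$ distinct odd cycle lengths up to roughly $2\log_2 n$; the \emph{even} lengths come from \emph{pairs} of vertex-disjoint leaf-leaf paths, whose length-sums reach up to $\approx 4d$ and hence give about $2\log_2 n$ distinct even cycle lengths up to roughly $4\log_2 n$. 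The union of these two ranges is what yields $3\log_2 n+O(1)$ distinct lengths, with the ``$2$'' for even lengths reflecting that the disjoint-pair trick reaches twice as far as a single path. Any proof should therefore aim to establish, for a general degree $3$-critical $G$ on $n$ vertices, that (A) it has $\gtrsim\log_2 n$ distinct odd cycle lengths and (B) it has $\gtrsim 2\log_2 n$ distinct even cycle lengths, so that the two families together give the desired count.

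For the structural input I would use the ordering of Lemma~\ref{Ordering}. Reading the vertices in reverse as $x_n,x_{n-1},\dots,x_1$ exhibits $G$ as built from a single edge by repeatedly adding a vertex joined to two earlier vertices (a Henneberg-type construction), finishing with a single degree-$3$ vertex. Equivalently, fixing a spanning tree $S$ adapted to this order, each of the $n-1$ chords $e=ab$ closes a \emph{fundamental cycle} $C_e$ of length $1+\operatorname{dist}_S(a,b)$; distinct fundamental-cycle lengths immediately give distinct odd or even cycle lengths. To get family (A) one wants to show that the $S$-distances between chord endpoints populate $\Omega(\log n)$ distinct values---an expansion statement asserting that these tree paths cannot all be short, playing the role that logarithmic tree-depth plays in $G(T_d)$. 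To get family (B), and in particular to reach even lengths of order $4\log_2 n$, I would combine two fundamental cycles hanging off (nearly) disjoint parts of $S$: their symmetric difference is a cycle of length close to the sum of the two, exactly mirroring the disjoint-leaf-leaf-path mechanism of Lemma~\ref{CycleLengths}(ii). The long cycle of length $\approx 4\log_2 n$ guaranteed by Bollob\'as and Brightwell~\cite{BB} should serve both as a sanity check on the target range and, after refinement, as a scaffold along which many nested cycle lengths can be read off.

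The hard part will be making Steps (A) and (B) quantitatively sharp enough to produce the \emph{exact} constant $3$ rather than merely $\Omega(\log n)$ distinct lengths. The Henneberg build gives no canonical balanced binary depth: the two back-neighbours of an added vertex may be close in $S$, creating short fundamental cycles, and the min-degree-$3$, no-proper-induced-subgraph conditions must be leveraged to rule out a spanning tree whose chord-distances are all bounded (which would correspond to a low-depth, wide structure, and hence to few cycle lengths). Controlling the overlap between the odd and even ranges, and showing that the disjoint-pair family genuinely fills $\approx 2\log_2 n$ distinct even values up to $\approx 4\log_2 n$ \emph{uniformly} over all degree $3$-critical graphs, is precisely the obstacle that keeps this statement a conjecture; a successful attack would likely require a robust ``tree of logarithmic depth'' extraction from the ordering of Lemma~\ref{Ordering}, strengthening the Bollob\'as--Brightwell longest-cycle argument into a statement about the entire cycle-length spectrum.
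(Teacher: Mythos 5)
This statement is posed in the paper as an open conjecture (Conjecture~\ref{CycleNumberConjecture} in the concluding remarks); the paper contains no proof of it, only the motivating observation that the Bollob\'as--Brightwell graphs $G(T_d)$ appear to be extremal. Your proposal is therefore not being measured against an existing argument, and, as you yourself concede in the final paragraph, it is a plan rather than a proof: both of its pillars, (A) and (B), are left unestablished. Your heuristic accounting of where the constant $3$ comes from in $G(T_d)$ (about $\log_2 n$ odd lengths from single leaf-leaf paths and about $2\log_2 n$ even lengths from disjoint pairs) is sound and matches the paper's intuition.

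Two concrete gaps are worth naming beyond the one you flag. First, for step (B) the symmetric difference of two fundamental cycles hanging off ``(nearly) disjoint parts of $S$'' is \emph{not} a single cycle: if the two fundamental cycles are edge-disjoint their symmetric difference is a disjoint union of two cycles, and it is a single cycle only when their intersection is a nonempty path. The disjoint-pair mechanism of Lemma~\ref{CycleLengths}(ii) works in $G(T)$ only because the two special vertices $x$ and $y$ are adjacent to \emph{all} leaves, so any two disjoint leaf-leaf paths can be stitched into one cycle through $\{x,y\}$; a general degree $3$-critical graph has no analogue of this gadget, so the even-length family has no obvious source. Second, for step (A) nothing in Lemma~\ref{Ordering} or in the Bollob\'as--Brightwell theorem forces the chord $S$-distances to take $\Omega(\log n)$ distinct values: the longest-cycle result guarantees one cycle of length roughly $4\log_2 n$, and passing from a single long cycle to $\Omega(\log n)$ distinct fundamental-cycle lengths is exactly the missing expansion statement. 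Until those two points are supplied, the proposal does not establish even the qualitative bound of $\Omega(\log n)$ distinct cycle lengths, let alone the constant $3$.
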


A similar conjecture could be made about leaf-leaf paths in trees.
\begin{conjecture}\label{PathNumberConjecture}
Every $1$-$3$ tree has leaf-leaf paths of at least $\log_2 n$ distinct lengths.
\end{conjecture}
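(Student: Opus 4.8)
The plan is to reduce the problem to full binary trees and then induct on the number of leaves. First I would root the $1$-$3$ tree $T$ at an arbitrary leaf $\rho$; since every non-root vertex of degree $3$ then acquires one parent and two children, the tree hanging below $\rho$ is a \emph{full binary tree} $B$ (every internal vertex has exactly two children), with $\rho$ attached to its root $\rho'$ by a pendant edge. If $L$ is the number of leaves of $B$, then $|B| = 2L-1$, so $n = |B|+1 = 2L$ and $L = n/2$. Every leaf of $B$ is a leaf of $T$ and every leaf-leaf path of $B$ is one of $T$, so it suffices to bound from below the number $|P(B)|$ of distinct leaf-leaf path lengths of $B$, and the natural target is $|P(B)| \ge \lfloor \log_2 L\rfloor$, which yields $\lfloor\log_2 n\rfloor - 1$ distinct lengths for $T$. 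One should be upfront here that the literal bound $\log_2 n$ cannot hold with zero slack: the Bollob\'as--Brightwell tree $T_d$ has only $d+1 = \log_2 n - O(1)$ distinct leaf-leaf path lengths, so the true content of the conjecture is to pin down the additive constant, and any clean inductive argument of this kind will produce $\log_2 n - O(1)$.

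Next I would set up the induction. Write $B$ as its root $\rho'$ together with two full binary subtrees $B_1,B_2$, with leaf counts $L_1 \ge L_2 \ge 1$ and $L_1+L_2 = L$. Letting $D_i$ be the set of root-to-leaf depths of $B_i$, the leaf-leaf paths of $B$ fall into those inside $B_1$, those inside $B_2$, and the \emph{crossing} paths through $\rho'$, whose lengths form the shifted sumset $(D_1+D_2)+2$; thus $P(B) = P(B_1)\cup P(B_2)\cup \big((D_1+D_2)+2\big)$. Since $L_1 \ge L/2$, the inductive hypothesis gives $|P(B_1)| \ge \lfloor\log_2 L_1\rfloor \ge \lfloor\log_2 L\rfloor - 1$. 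Whenever $L_1 \ge 2^{\lfloor\log_2 L\rfloor}$ the floor does not drop and $B_1$ alone meets the target; the only real work is in the \emph{balanced regime} $L_1 < 2^{\lfloor\log_2 L\rfloor}$, where I must exhibit one leaf-leaf path length of $B$ that does not already lie in $P(B_1)$.

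To produce that extra length I would compare the longest crossing path with the diameter of $B_1$. Writing $h_i$ for the height of $B_i$, the longest crossing path has length $h_1+h_2+2$, while the diameter of $B_1$, the maximum of $P(B_1)$, is at most $2h_1$. Hence as soon as $h_2 \ge h_1-1$ the longest crossing path strictly exceeds everything in $P(B_1)$ and supplies the required new length; this always works when $B_1$ and $B_2$ are comparably balanced, the typical situation in the balanced regime.

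The main obstacle is the remaining case of a \emph{shallow but bushy} smaller subtree, where $B_2$ carries many leaves yet has height $h_2 \le h_1-2$, so the longest crossing path can fall inside the diameter of $B_1$ and no new length is visible at the top. Here I would strengthen the induction hypothesis to control not only $|P(B)|$ but also the depth-set $D(B)=(D_1\cup D_2)+1$, and exploit $|D_1+D_2| \ge |D_1|+|D_2|-1$: a bushy $B_2$ forces $|D_2|$ to be large, which forces the crossing lengths $(D_1+D_2)+2$ to occupy many values and hence to escape the comparatively sparse set $P(B_1)$. Making this trade-off quantitative --- balancing the growth of the depth-set against that of the length-set across the recursion, and checking it against $T_d$ so as not to lose more than an additive constant --- is the crux and the step I expect to be genuinely hard. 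Note finally that, unlike the even $1$-$3$ trees underlying Lemma~\ref{DeepTree} and Lemma~\ref{PathLengths}, a general $1$-$3$ tree admits leaf-leaf paths of both parities, so one cannot simply invoke the even-tree machinery and a parity-free version of that analysis must be developed along the way.
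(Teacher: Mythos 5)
You are attacking Conjecture~\ref{PathNumberConjecture}, which the paper only \emph{poses} as an open problem in the concluding remarks; there is no proof in the paper to compare against, so your attempt must stand on its own. Several of your preliminary steps are sound and worth keeping: rooting at a leaf to reduce to a full binary tree $B$ with $L=n/2$ leaves, the exact decomposition $P(B)=P(B_1)\cup P(B_2)\cup\bigl((D_1+D_2)+2\bigr)$, and above all the observation that the literal bound fails by an additive constant. Indeed the Bollob\'as--Brightwell tree $T_d$ has $n=3\cdot 2^{d+1}-2$ vertices and only $d+2$ distinct leaf-leaf path lengths (namely $0,2,4,\dots,2d,2d+2$, counting the trivial path as the paper does in Theorem~\ref{TreeLengths}), which is strictly less than $\log_2 n>d+2$ for $d\geq 1$; so any true statement here must read $\log_2 n-O(1)$, and you are right to flag this.

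That said, what you have is a plan rather than a proof, and the gap is exactly where you locate it. The inductive step closes only when $\lfloor\log_2 L_1\rfloor=\lfloor\log_2 L\rfloor$ or when $h_2\geq h_1-1$; the remaining case is not a corner case but the heart of the problem (for instance, whenever $L$ is a power of two you are automatically in it). Your proposed repair --- tracking the depth sets and invoking $|D_1+D_2|\geq|D_1|+|D_2|-1$ --- does not obviously help in the worst instances: if $B_2$ is a single leaf or a perfect subtree then $|D_2|=1$, the crossing lengths form a single translate $D_1+h_2+2$ of $D_1$, and nothing prevents that translate from lying entirely inside $P(B_1)$ for an adversarially built $B_1$; moreover $|D_1|$ itself can equal $1$ while $|P(B_1)|$ is barely $\lfloor\log_2 L_1\rfloor$, so there is no surplus in the hypothesis to absorb a total collision. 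Checking any strengthened invariant against $T_d$ shows there is essentially no room to lose even one unit per level of the recursion, which is precisely why the problem is hard. So the conjecture remains open after your attempt; your contribution is a correct reduction, a correct identification of the difficult case, and a correct recalibration of the conjectured constant, but not a proof.
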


In this paper we have shown that for $d\geq 20$, it is impossible to guarantee that a sufficiently large $1$-$3$ tree $T$ contains a leaf-leaf path of length $d$.  However, perhaps it is the case that in a sufficiently large $1$-$3$ tree, there are leaf-leaf paths of ``many'' short lengths.
\begin{conjecture}
There is a constant $\alpha>0$ and a function $C(n)$ tending to infinity 
such that every $1$-$3$ tree of order $n$ contains  at least $\alpha C(n)$ of distinct leaf-leaf path lengths between $0$ and $C(n)$.
\end{conjecture}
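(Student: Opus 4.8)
The plan is to establish the statement with $C(n)=\lfloor\log_2 n\rfloor$ and some small absolute constant $\alpha>0$, by splitting every $1$-$3$ tree $T$ on $n$ vertices into two regimes according to how deep it is. Since $T$ has maximum degree $3$, a tree of diameter $D$ has at most $O(2^{D/2})$ vertices, so the diameter of $T$ is always at least $c\log_2 n$ for an absolute constant $c>0$; this is what guarantees there is enough ``room'' to hunt for $\Omega(\log n)$ distinct lengths. Root $T$ at an arbitrary leaf and, for each vertex $v$, let $m(v)$ denote the distance from $v$ to the \emph{nearest} leaf lying in the subtree hanging below $v$. The dichotomy I would use is: either some $v$ satisfies $m(v)\ge C(n)$ (the \emph{deep} case), or every vertex is within distance $C(n)$ of a leaf below it (the \emph{shallow} case).

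In the deep case the subtree $T_v$ rooted at such a $v$ is a rooted binary tree (every internal vertex has exactly two children, since internal vertices of a $1$-$3$ tree have degree $3$) whose shortest root-leaf path has length at least $C(n)$. An argument in the spirit of Lemma~\ref{DeepTree} then produces leaf-leaf paths of all even lengths $0,2,4,\dots,2C(n)$ inside $T_v$, and these are leaf-leaf paths of $T$. At least $C(n)/2$ of them lie in $[0,C(n)]$, so the deep case is immediately done with $\alpha=\tfrac12$. (A minor caveat is that Lemma~\ref{DeepTree} is stated for \emph{even} trees; for a general binary $T_v$ the parities of root-leaf paths may mix, but this only creates \emph{more} attainable lengths, and a routine variant of the lemma still yields $\Omega(C(n))$ distinct lengths in the window.)

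The shallow case is where the real content lies. Here I would fix a diameter path $P=p_0p_1\cdots p_D$ (so $p_0,p_D$ are leaves and $D\ge c\log_2 n$), and for each interior $p_i$ let $S_i$ be the pendant subtree hanging off $P$ at $p_i$, with $a_i$ the minimum distance from $p_i$ to a leaf of $S_i$; shallowness forces each $a_i\le C(n)$. Exactly as in the classification underlying Lemma~\ref{PathLengths}, the leaf-leaf paths of $T$ of bounded length are precisely the ``one-branch'' paths of length $i+\delta$ (with $\delta$ a root-leaf distance inside $S_i$) and the ``two-branch'' paths of length $a_i+\lvert i-j\rvert+a_j$. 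The leaf-leaf path lengths in $[0,C(n)]$ are thus governed by the additive behaviour of the bounded odd-even sequence $(a_i)$, in the same way that the existence of short cycles in $G(T)$ was governed by $k$-avoiding sequences via Lemma~\ref{CycleLengths} and Proposition~\ref{TreeSequenceEquivalence}. The goal reduces to showing that for every bounded odd-even sequence the value set $\{a_i+a_j+\lvert i-j\rvert\}$ meets a fixed window in at least a constant fraction of its residues of the correct parity.

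I expect this last reduction to be the main obstacle, and it is precisely why the statement is posed as a conjecture. The constructions of Theorem~\ref{thm:20-avoiding} show that the adversary can force the value set to \emph{miss} prescribed individual lengths, so one cannot hope to hit \emph{all} lengths in $[0,C(n)]$; the claim is only that a positive density survives. Proving such a density lower bound uniformly over all bounded odd-even sequences is an additive-combinatorial problem about sumsets of the form $\{a_i+a_j+\lvert i-j\rvert\}$ that the fault-line method of Section~\ref{CounterexampleSection} does not seem able to settle. A natural first step would be to prove the weaker quantitative statement that no bounded odd-even sequence can avoid a \emph{positive fraction} of a long window (the density analogue of Theorem~\ref{thm:18-avoiding}), and then transfer it back to trees through an extension of Proposition~\ref{TreeSequenceEquivalence}.
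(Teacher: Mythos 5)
The statement you were asked to prove is not a theorem of the paper at all: it is one of the open problems posed in the concluding remarks, and the paper contains no proof of it. Your write-up is candid about this, and indeed what you have produced is a proof \emph{program} rather than a proof. The framing is the right one: your deep/shallow dichotomy, the use of a long path with pendant subtrees, and the translation of the shallow case into a question about a bounded odd-even sequence $(a_i)$ and the value set $\{a_i+a_j+|i-j|\}$ is precisely the dictionary the paper builds in Lemma~\ref{PathLengths} and Proposition~\ref{TreeSequenceEquivalence}. But the entire content of the conjecture is concentrated in the step you explicitly defer, namely that \emph{every} bounded odd-even sequence realizes a positive proportion of the admissible values in a window $[0,C(n)]$. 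The paper's machinery gives no leverage here: Theorem~\ref{thm:18-avoiding} shows one particular value ($18$) cannot be avoided, Theorem~\ref{thm:20-avoiding} shows another ($20$) can, and the fault-line case analysis that proves the former is a finite, value-specific argument with no visible route to a uniform positive-density statement. So the shallow case is a genuine, unfilled gap, and it is the whole problem.

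Two smaller points. In the deep case you lean on Lemma~\ref{DeepTree}, which is stated and proved only for \emph{even} rooted binary trees; the conjecture concerns arbitrary $1$-$3$ trees, where the subtree $T_v$ need not be even, root-leaf parities can mix, and the inductive step of that lemma (which splits according to whether a child subtree has shortest root-leaf path $m$ or $m-1$ and then glues two depth-$(m-1)$ paths through the root) does not transfer verbatim. It is plausible that a non-even binary tree with all root-leaf distances at least $m$ still has $\Omega(m)$ distinct leaf-leaf path lengths below $2m$, but this needs an actual argument, not a parenthetical. Also, a bounded-degree tree of diameter $D$ has up to $\Theta(2^{D})$ vertices, not $O(2^{D/2})$; the conclusion $D\ge c\log_2 n$ survives, but with a different constant. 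Neither of these is the main obstruction --- the missing density analogue of Theorem~\ref{thm:18-avoiding} is --- and until that is supplied the conjecture remains open, exactly as the paper leaves it.
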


\bibliography{Cycles}
\bibliographystyle{abbrv}
\end{document}